\tikzset{
  commutative diagrams/.cd, 
  arrow style=tikz, 
  diagrams={>=cm to}
}
\NewDocumentCommand{\Cartesiansquare}{m m m m O{} O{} O{} O{}}{
\begin{equation*}\begin{tikzcd}[ampersand replacement=\&]
  #1 \arrow{r}{#5} \arrow{d}{#6} \arrow[dr, phantom, "\square"] \& #2 \arrow{d}{#7} \\
  #3 \arrow{r}{#8} \& #4
\end{tikzcd} \end{equation*}}
\newtheorem{thm}{Theorem}[section]
\newtheorem*{thm*}{Theorem}
\newtheorem*{prop*}{Proposition}
\newenvironment{customthm}[1]
  {\innercustomthm}
  {\endinnercustomthm}
\newenvironment{customcor}[1]
  {\innercustomcor}
  {\endinnercustomcor}
\newtheorem{cor}[thm]{Corollary}
\newtheorem{prop}[thm]{Proposition}
\newtheorem{lem}[thm]{Lemma}
\theoremstyle{definition}
\newtheorem{defn}[thm]{Definition}
\newtheorem{exmp}[thm]{Example}
\newtheorem{rem}[thm]{Remark}
\newtheorem*{rem*}{Remark}
\newcommand\Mbar{\overline{\mathcal{M}}}
\newcommand\Mfr{\mathfrak{M}}
\newcommand\VZ{\mathcal{VZ}}
\newcommand\rad{\mathrm{rad}}
\newcommand{\M}{\mathcal{M}}
\renewcommand{\rad}{\mathrm{rad}}
\newcommand{\C}{\mathbb{C}}
\newcommand{\tgamma}{\tilde{\Gamma}}
\newcommand{\Ob}{\mathrm{Ob}}
\newcommand{\T}{\mathbf{T}}
\renewcommand{\P}{\mathbb{P}}
\newcommand{\K}{\mathcal{K}}
\title{The dual complex of $\mathcal{M}_{1,n}(\mathbb{P}^r,d)$ via the geometry of the Vakil--Zinger moduli space}
\author[S. Kannan]{Siddarth Kannan}\address{Department of Mathematics, Massachusetts Institute of Technology}
\email{\url{spkannan@mit.edu}}
\author[T. Song]{Terry Dekun Song}\address{Department of Pure Mathematics and Mathematical Statistics, University of Cambridge, Cambridge, CB3 0WA}\email{\url{ds2016@cam.ac.uk}}
\begin{document}

\maketitle

\pagestyle{myheadings}
\markright{\hfill Dual complex of $\mathcal{M}_{1,n}(\mathbb{P}^r,d)$\hfill}

\begin{abstract}
We study normal crossings compactifications of the moduli space of maps $\M_{g, n}(\mathbb{P}^r, d)$, for $g = 0$ and $g = 1$. In each case we explicitly determine the dual boundary complex, and prove that it admits a natural interpretation as a moduli space of decorated metric graphs. We prove that the dual complexes are contractible when $r \geq 1$ and $d > g$. When $g = 1$, our result depends on a new understanding of the connected components of boundary strata in the Vakil--Zinger desingularization and its modular interpretation by Ranganathan--Santos-Parker--Wise.
\end{abstract}
\section*{Introduction}

In this paper we are interested in compactifications of the moduli space $\M_{g, n}(\mathbb{P}^r, d)$ parameterizing degree $d$ maps from smooth $n$-pointed curves of genus $g$ to projective space. When $g = 0$, the Kontsevich moduli space $\Mbar_{0, n}(\mathbb{P}^r, d)$ provides a smooth modular normal crossings compactification of $\M_{0, n}(\mathbb{P}^r, d)$. When $g = 1$, a smooth modular normal crossings compactification $\widetilde{\M}_{1, n}(\mathbb{P}^r, d)$ of $\M_{1, n}(\mathbb{P}^r, d)$ has been constructed using logarithmic and tropical geometry by Ranganathan--Santos-Parker--Wise \cite{rspw} following earlier work of Vakil--Zinger \cite{vakilzinger} and Hu--Li \cite{huli}. 

We investigate both combinatorial and geometric aspects of the mapping spaces $\widetilde{\M}_{1,n}(\mathbb{P}^r,d)$. Each point in this moduli space is a stable map satisfying a linear condition at certain nodes of the domain curve, and has an associated \textit{radially-aligned} dual graph $(\mathbf{G}, \rho)$ (Definition \ref{defn-radalign}). The smoothing of maps leads to specialization relations among these graphs. These can be packaged into a space $\Delta_{1, n}(d)$, which is a \textit{symmetric $\Delta$-complex} in the sense of \cite[\S 3]{cgp}. We write $\widetilde{\M}(\mathbf{G}, \rho)$ for the locus of maps with dual graph $(\mathbf{G}, \rho)$.

\begin{customthm}{A}\label{thm:determination}
Fix $r \geq 1$. The stratum $\widetilde{\M}(\mathbf{G}, \rho)$ is irreducible, and there is an explicit combinatorial criterion for when $\widetilde{\M}(\mathbf{G}, \rho)$ is nonempty. The dual complex of the divisor
\[\widetilde{\M}_{1, n}(\mathbb{P}^r, d) \smallsetminus \M_{1, n}(\mathbb{P}^r, d) \]
is naturally identified with the symmetric $\Delta$-complex $\Delta_{1, n}(d)$.
\end{customthm}

 We also prove that the dual complexes are contractible.
\begin{customthm}{B} \label{thm:mainthm}
    Fix $r\geq 1$. The dual complex $\Delta_{0, n}(d)$ of \[\Mbar_{0, n}(\mathbb{P}^r, d) \smallsetminus \M_{0, n}(\mathbb{P}^r, d) \] for $d>0$ as well as the dual complex $\Delta_{1, n}(d)$ of 
    \[\widetilde{\M}_{1, n}(\mathbb{P}^r, d) \smallsetminus \M_{1, n}(\mathbb{P}^r, d)   \] for $d>1$ are contractible. In particular, the reduced homology groups of the dual complexes vanish.
\end{customthm}

If $\mathcal{X}$ is a smooth Deligne--Mumford stack and $\overline{\mathcal{X}}$ is a normal crossings compactification of $\mathcal{X}$, the dual complex $\Delta(\mathcal{D})$ is a symmetric $\Delta$-complex\footnote{The definition of a symmetric $\Delta$-complex generalizes taht of a cell complex and will be recalled in Section \ref{sec:deltavir}.} encoding the combinatorics of the divisor $\mathcal{D} = \overline{\mathcal{X}} \smallsetminus \mathcal{X}$. It has a $p$-cell for each codimension $p$ stratum of $\mathcal{D}$, and these cells are glued together according to inclusions of strata. Self-gluing of cells is allowed, reflecting the possibility that the fundamental groups of strata may have non-trivial monodromy actions on the branches of $\mathcal{D}$. The simple homotopy type of the complex $\Delta(\mathcal{D})$ is independent of the choice of normal crossings compactification \cite{stepanov, Thuillier2007, pay, harper}, and is hence an invariant of $\mathcal{X}$. The connection between the dual complex and mixed Hodge theory is well-known: if $\mathcal{X}$ has dimension $n$ over $\mathbb{C}$, Deligne's weight spectral sequence gives isomorphisms \cite[Theorem 5.8]{cgp} \[\widetilde{H}_{k-1}(\Delta(\mathcal{D}); \mathbb{Q} )\cong \mathrm{Gr}_{2n}^WH^{2n-k}(\mathcal{X};\mathbb{Q})\cong (W_0H_c^{k}(\mathcal{X};\mathbb{Q}))^\vee.\] Taking the reduced homology of the dual complex, we deduce the vanishing of the top-weight singular cohomology of the mapping spaces.
\begin{customcor}{C}\label{cor:maincor}
For $\mathcal{X}=\mathcal{M}_{0,n}(\mathbb{P}^r,d)$ with $d>0$ or $\mathcal{X} = \mathcal{M}_{1,n}(\mathbb{P}^r,d)$ with $d>1$, \[\mathrm{Gr}_{2\delta}^W H^{2\delta-k}(\mathcal{X}; \mathbb{Q})\cong (W_0 H_c^{k}(\mathcal{X};\mathbb{Q}))^\vee=0\]
for all $k$, where $\delta = \dim_{\mathbb{C}}(\mathcal{X})$.
\end{customcor}

 \begin{rem*}
 Note that when $d = 0$, the mapping spaces are $\mathcal{M}_{0,n}\times \mathbb{P}^r$ and $\mathcal{M}_{1,n}\times \mathbb{P}^r$, both of which have non-trivial top-weight cohomology \cite{robinsonwhitehouse, Vogtmann_1990}, \cite[Corollary 1.3]{aluffi_topology_2022}. When $d = 1$, we have $\M_{1,n}(\mathbb{P}^r,1)= \varnothing$. The dual complexes do not depend on $r$, as long as $r \geq 1$.
 \end{rem*}
 
 One can view Theorem \ref{thm:mainthm} as stating that the input of a non-trivial target destroys all top-weight cohomology classes. In our work, the vanishing of the top-weight cohomology is witnessed by the combinatorics of boundary strata.

\subsection*{Compactifications of $\M_{g, n}(\mathbb{P}^r, d)$} The mapping space $\mathcal{M}_{g,n}(\mathbb{P}^r,d)$ admits a natural forgetful map
\[\mathcal{M}_{g,n}(\mathbb{P}^r,d)\to \mathcal{M}_{g,n}\] given by $(C,f)\mapsto C$, which is smooth for $d > g$. However, a modular normal crossings compactification of $\mathcal{M}_{g,n}(\mathbb{P}^r,d)$ for all genera is not known. One compactification is provided by the Kontsevich space of stable maps \[\mathcal{M}_{g,n}(\mathbb{P}^r,d)\subset \Mbar_{g,n}(\mathbb{P}^r,d).\]
Some of the key properties of this space are summarized as follows:
\begin{itemize}
    \item \textit{(Lack of) smoothness}: For $g\geq 1$, the space $\Mbar_{g,n}(\mathbb{P}^r,d)$ is typically reducible with components of dimension greater than $\dim \mathcal{M}_{g,n}(\mathbb{P}^r,d)$. It satisfies Murphy's law as stated by Vakil \cite{murphy}. When $g = 0$ the space is smooth, and it is a normal crossings compactification of $\M_{0, n}(\mathbb{P}^r, d)$ \cite[§1.3.2]{Kon95}.
    \item \textit{Explicit boundary combinatorics}: The complement $\Mbar_{g,n}(\mathbb{P}^r,d)\smallsetminus \mathcal{M}_{g,n}(\mathbb{P}^r,d)$ is stratified by what we call $(g,n,d)$-graphs: these are prestable dual graphs decorated with degree assignments on vertices.
    \item \textit{Explicit boundary geometry}: A decorated dual graph specifies a stratum of $\Mbar_{g,n}(\mathbb{P}^r,d)$ given by fiber products of mapping spaces $\mathcal{M}_{g',n'}(\mathbb{P}^r,d')$ along evaluation maps to $\mathbb{P}^r$.
\end{itemize}
The failure of $\Mbar_{g, n}(\mathbb{P}^r, d)$ to be a smooth normal crossings compactification suggests that new tools are needed to understand the weight filtration of $\M_{g, n}(\mathbb{P}^r, d)$ in positive genus. In genus one, we use the moduli space $\widetilde{\M}_{1, n}(\mathbb{P}^r, d)$ of \textit{radially-aligned} stable maps which satisfy a factorization condition, defined in \cite[Definition 4.1]{rspw}. It is constructed as a closed subscheme of an iterated blow-up of the Kontsevich space $\Mbar_{1, n}(\mathbb{P}^r, d)$, and can be understood as a further blow-up of the Vakil--Zinger desingularization $\mathcal{VZ}_{1, n}(\mathbb{P}^r, d)$. The space $\widetilde{\M}_{1, n}(\mathbb{P}^r, d)$ enjoys the following properties:
\begin{itemize}
    \item \textit{Smoothness}: The space $\widetilde{\M}_{1, n}(\mathbb{P}^r, d)$ is smooth, proper, and contains $\M_{1, n}(\mathbb{P}^r, d)$ as a dense open subset. The complement of $\M_{1, n}(\mathbb{P}^r, d)$ in $\widetilde{\M}_{1, n}(\mathbb{P}^r, d)$ is a divisor with normal crossings.
    \item \textit{Explicit boundary combinatorics}: The boundary divisor $\widetilde{\M}_{1, n}(\mathbb{P}^r, d) \smallsetminus \mathcal{M}_{1,n}(\mathbb{P}^r,d)$ is stratified by \textit{radially-aligned} $(1,n,d)$-graphs. These are a subset of the dual graphs indexing the strata of the Kontsevich compactification, endowed with the additional decoration of a \textit{radial alignment}: a surjective function from the irreducible components of the curve to $\{0, \ldots, k\}$ for some $k \geq 0$, satisfying some natural combinatorial conditions.
    \item \textit{Tractable boundary geometry}: A decorated dual graph specifies a stratum of $\widetilde{\M}_{1, n}(\mathbb{P}^r, d)$. The resulting strata are not as explicit as that of the Kontsevich space, but it is still possible to probe their geometry: each stratum is given by a torus bundle over a certain closed subscheme of a graph stratum in the Kontsevich space. A key step towards Theorem \ref{thm:determination} is our proof that these strata are irreducible.
\end{itemize}

\subsection*{Contractibility of the virtual dual complex}
In Section \ref{sec:dualcomplex}, following the construction of the moduli space of tropical curves $\Delta_{g,n}$ as in \cite{aluffi_topology_2022}, we construct a moduli space $\Delta_{g,n}^{\mathrm{vir}}(d)$ of pairs $(\mathbf{G}, \ell)$ where $\mathbf{G}$ is the degree-decorated dual graph of a Kontsevich stable map, and $\ell$ is a metric on the edges of $\mathbf{G}$. The space $\Delta_{g,n}^{\mathrm{vir}}(d)$ is a symmetric $\Delta$-complex which encodes the combinatorics of Kontsevich stable maps. However, it is not the genuine dual complex of $\mathcal{M}_{g,n}(\mathbb{P}^r,d)$ unless $g=0$, since stable map compactifications are far from being normal crossings. We hence call $\Delta_{g,n}^{\mathrm{vir}}(d)$ the \textit{virtual dual complex} of the Kontsevich space. Our main result on the topology of $\Delta_{g,n}^{\mathrm{vir}}(d)$ is the construction of an explicit deformation retract to a point.

\begin{thm*}[Theorem \ref{thm:virtual-contraction}]
    For $d>0$, there is a deformation retract 
    \[\Delta_{g,n}^{\mathrm{vir}}(d) \times [0, 1] \to \Delta_{g,n}^{\mathrm{vir}}(d) \] onto a point. In other words, $\Delta^{\mathrm{vir}}_{g,n}(d)$ is contractible.
\end{thm*}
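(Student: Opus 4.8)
The plan is to build the contraction by combinatorially contracting the metric graphs, in a way compatible with the face maps of the symmetric $\Delta$-complex $\Delta_{g,n}^{\mathrm{vir}}(d)$. The key observation is that the degree decoration gives us a canonical ``sink'': since $d > 0$, some vertex (or some set of vertices) of $\mathbf{G}$ carries positive degree, and one can use positivity of degree to guarantee that after contracting all edges of the graph, the resulting single-vertex graph still represents a point of $\Delta_{g,n}^{\mathrm{vir}}(d)$ — i.e. a genus-$g$, $n$-marked, degree-$d$ stable map type with no edges, which exists precisely because $d>0$ removes the stability obstruction that would otherwise force edges. So the target point of the retraction should be the cone point corresponding to the graph with one vertex of genus $g$, all $n$ markings, and total degree $d$.

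First I would set up the standard ``scaling down the longest edges'' homotopy used for $\Delta_{g,n}$ in \cite{aluffi_topology_2022} and in the tropical moduli space literature: on a cell $\sigma_{\mathbf{G}} = \{\ell : E(\mathbf{G}) \to \mathbb{R}_{\geq 0}, \sum \ell(e) = 1\}$, one flows the edge lengths toward a distinguished vertex, uniformly shrinking edges and recording which edges reach length zero. The subtlety compared to the pure $\Delta_{g,n}$ case is that contracting an edge of a $(g,n,d)$-graph must yield another $(g,n,d)$-graph: contracting a non-loop edge merges two vertices and adds their degrees, contracting a loop adds $1$ to the genus of the vertex — both operations preserve total degree and total genus, so edge contraction is well-defined on $\Delta_{g,n}^{\mathrm{vir}}(d)$, and the face maps of the symmetric $\Delta$-complex are exactly these contractions together with the automorphisms of $\mathbf{G}$. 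Hence the standard homotopy descends.

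The key steps, in order: (1) identify the cone point $v_0$ = the edgeless $(g,n,d)$-graph, and verify it is a genuine $0$-cell of $\Delta_{g,n}^{\mathrm{vir}}(d)$ for every $d>0$ (this is where $d>0$ is used — it makes the one-vertex graph stable as a Kontsevich-stable-map type regardless of $g$ and $n$); (2) define, on each open cell $\sigma_{\mathbf{G}}^\circ$, the homotopy $H_t$ that at time $t$ rescales so that the ``oldest'' (longest) edges are progressively set to zero length and their contractions are performed, reaching $v_0$ at $t=1$; (3) check continuity and compatibility across faces — that $H_t$ commutes with the generating face maps (edge contractions) and with graph automorphisms, so it glues to a continuous map $\Delta_{g,n}^{\mathrm{vir}}(d) \times [0,1] \to \Delta_{g,n}^{\mathrm{vir}}(d)$; (4) confirm $H_0 = \mathrm{id}$ and $H_1 \equiv v_0$, and that $H_t$ fixes $v_0$ for all $t$, so it is a deformation retraction.

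The main obstacle I anticipate is step (3): making the contraction well-defined and continuous \emph{globally} on the symmetric $\Delta$-complex, rather than cell-by-cell. One has to choose the order in which edges are collapsed in a way that is invariant under the $\mathrm{Aut}(\mathbf{G})$-action and that agrees on overlapping faces; the clean way is not to pick an order at all but to use the ``simultaneous'' version — flow all edge-lengths by $\ell \mapsto \ell - t$ (suitably normalized), collapsing each edge at the moment its length hits zero — which is manifestly symmetric and manifestly compatible with contraction face maps, since contracting an edge of $\mathbf{G}$ and then flowing equals flowing on $\mathbf{G}$ and reading off the sub-cell. I would phrase the homotopy in these terms and then the verification becomes a routine check that this linear flow passes to the quotient defining $\Delta_{g,n}^{\mathrm{vir}}(d)$. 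A secondary point to be careful about: one must ensure the flow never leaves the space, i.e. every intermediate graph obtained by contracting a subset of edges is still a valid $(g,n,d)$-graph — but this is immediate since contraction of any edge set preserves the genus and degree data and only ever \emph{relaxes} stability, so no intermediate type is excluded.
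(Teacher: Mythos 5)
There is a genuine gap, in fact two related ones, and they are structural rather than technical.

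\textbf{The proposed target point does not exist in $\Delta_{g,n}^{\mathrm{vir}}(d)$.} You propose to retract onto the edgeless one-vertex $(g,n,d)$-graph $\mathbf{G}_{\varnothing}$. But the cell associated to a graph $\mathbf{G}$ is $\kappa_{\mathbf{G}} = \{\ell : E(\mathbf{G}) \to \mathbb{R}_{\geq 0} \mid \sum_e \ell(e) = 1\}$, which is \emph{empty} when $E(\mathbf{G}) = \varnothing$. The edgeless graph corresponds to the open locus $\M_{g,n}(\mathbb{P}^r,d)$, not to any stratum of the boundary, so it does not contribute a $0$-cell. The actual $0$-cells of $\Delta_{g,n}^{\mathrm{vir}}(d)$ are graphs with exactly one edge. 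Positivity of $d$ does not rescue this: what $d>0$ buys is not the existence of $\mathbf{G}_\varnothing$ as a cell (it never is), but the existence of a canonical one-edge-per-unit-of-degree graph $\mathbf{G}_\varnothing^{\mathrm{sp}}$ (the ``sprouting'' of the edgeless graph: a central vertex of degree $0$ with $d$ pendant degree-$1$ vertices). That is the actual cone point used in the paper.

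\textbf{The uniform shrinking flow does not converge to a point.} Even setting aside the question of the target, the flow $\ell \mapsto \ell - t$ (truncated at $0$ and renormalized) has as its fixed locus the set of metrics where all positive edge lengths are equal. It carries a point of $\kappa_{\mathbf{G}}^\circ$ to the barycenter of the face of $\kappa_{\mathbf{G}}$ spanned by the longest edges, and this limit varies from cell to cell and from point to point within a cell. So it is a deformation retraction onto a subcomplex, not onto a point, and showing that that subcomplex is contractible just restarts the problem. The paper avoids this entirely by doing something closer to the \emph{opposite} of what you propose: instead of shrinking existing edges toward a merged vertex, it \emph{grows} canonically defined new edges. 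Concretely, it maps $\kappa_{\mathbf{G}}$ into the larger cell $\kappa_{\mathbf{G}^{\mathrm{sp}}}$, where $\mathbf{G}^{\mathrm{sp}}$ is the sprouting of $\mathbf{G}$, and uses the linear homotopy $\ell_t(e) = (1-t)\ell(e) + t/d$ on sprouted $1$-end edges $e$ and $\ell_t(e) = (1-t)\ell(e)$ on all other edges. At $t=1$ only the $d$ sprouted $1$-ends survive, each of length $1/d$, which is the barycenter of $\kappa_{\mathbf{G}_\varnothing^{\mathrm{sp}}}$ independently of $\mathbf{G}$. This construction is $\mathrm{Aut}(\mathbf{G})$-equivariant and compatible with edge contraction (contracting a non-$1$-end edge leaves the sprouting unchanged, contracting a $1$-end edge leaves $\mathbf{G}^{\mathrm{sp}}$ unchanged as well), so it glues; these are exactly the compatibility checks you correctly anticipate, but the underlying flow has to be this ``sprout and grow'' one rather than ``shrink toward a vertex.''
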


When $g = 0$, the virtual dual complex $\Delta_{0, n}^{\mathrm{vir}}(d)$ coincides with the genuine dual complex of the boundary divisor in the Kontsevich moduli space $\Mbar_{0, n}(\mathbb{P}^r, d)$, so Theorem \ref{thm:virtual-contraction} proves the genus-zero case of Theorem \ref{thm:mainthm}. Identifying contractible subcomplexes has been a natural technique in understanding the topology of closely related dual complexes, such as that of $\M_{0,n}$ in \cite[Theorem 2.4]{robinsonwhitehouse}, $\M_{g, n}$ in \cite[§4]{aluffi_topology_2022} and that of $\mathcal{H}_{g,n}$ in \cite[§5]{bck24}. The construction of the deformation retraction will be adapted to prove that the genuine dual complex $\Delta_{1, n}(d)$ of the boundary divisor in $\widetilde{\M}_{1, n}(\mathbb{P}^r, d)$ is contractible.

\begin{prop*}[Proposition \ref{prop:subspace_preserved}]
The dual complex $\Delta_{1, n}(d)$ maps homeomorphically onto a subspace of $\Delta_{1, n}^{\mathrm{vir}}(d)$. This subspace is preserved by the deformation retract of Theorem \ref{thm:virtual-contraction}.
\end{prop*}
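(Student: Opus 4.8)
The plan is to exhibit an explicit cellular map $\iota\colon \Delta_{1,n}(d)\to \Delta_{1,n}^{\mathrm{vir}}(d)$, prove it is a homeomorphism onto its image, and then trace through the deformation retract of Theorem \ref{thm:virtual-contraction} to see that this image is invariant. First I recall the cell structures from Section \ref{sec:dualcomplex}: a cell of $\Delta_{1,n}^{\mathrm{vir}}(d)$ is the (symmetric quotient of a) simplex of edge lengths on a $(1,n,d)$-graph $\mathbf{G}'$, while a cell of $\Delta_{1,n}(d)$ is indexed by a \emph{radially aligned} $(1,n,d)$-graph $(\mathbf{G},\rho)$ and carries, besides edge lengths, the radial parameters $0=\lambda_0\le\lambda_1\le\cdots\le\lambda_k$ recorded by $\rho$. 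I define $\iota$ on each such cell by ``geometrizing'' the radial alignment: the parameters $\lambda_i$ are absorbed into the lengths of the edges crossing between radial levels — equivalently, one subdivides $\mathbf{G}$ near the core so that the radial distance to the core becomes an honest metric invariant — producing a $(1,n,d)$-graph $\mathbf{G}'=\mathbf{G}'(\mathbf{G},\rho)$ together with a metric. The first task is to check that $\mathbf{G}'(\mathbf{G},\rho)$ really is a $(1,n,d)$-graph (stability, genus one, correct total degree and marking distribution) and that the assignment is compatible with all face maps: contracting an edge of $\mathbf{G}$, colliding two consecutive radial levels, and collapsing the outermost level onto the core each correspond to contracting an edge of $\mathbf{G}'$, and the symmetric-group actions match. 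Continuity of $\iota$ is then automatic from the symmetric $\Delta$-complex structure.

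Next I argue that $\iota$ is injective with closed image, so that — $\Delta_{1,n}(d)$ being compact and $\Delta_{1,n}^{\mathrm{vir}}(d)$ Hausdorff — it is a homeomorphism onto a subspace. The key point is a reconstruction procedure: from a metric graph $(\mathbf{G}',\ell')$ in the image one recovers $\mathbf{G}$ by contracting the subdivided ``radial'' edges and recovers $\rho$ as the order type of the distances from the core; the combinatorial conditions of Definition \ref{defn-radalign} guarantee that precisely the radially aligned configurations arise this way and that the reconstruction is inverse to $\iota$ cell by cell. One then checks that distinct open cells of $\Delta_{1,n}(d)$ have disjoint images and that face relations are respected, which promotes the cellwise statement to a global embedding onto a closed subspace.

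Finally I verify that $\iota(\Delta_{1,n}(d))$ is preserved by the deformation retract $H$ of Theorem \ref{thm:virtual-contraction}. I recall the explicit description of $H$ from that proof: it is assembled cell by cell from monotone edge-length rescalings and edge-contractions localized near the distinguished positive-degree part of the graph, and in particular it never alters the abstract genus-one structure, i.e. the core. It therefore suffices to observe that applying one such elementary move to a graph of the form $\mathbf{G}'(\mathbf{G},\rho)$ yields a metric graph that is again of this form — concretely, that the piecewise-linear formula for $H$, restricted to $\iota(\Delta_{1,n}(d))$, preserves the equalities and inequalities cut out by the radial alignment. Composing over the finitely many steps of $H$ gives the statement, and the restriction of $H$ is then the promised self-contraction of $\Delta_{1,n}(d)$.

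I expect the main obstacle to be the reconstruction step, i.e. showing $\iota$ is a homeomorphism and not merely a continuous surjection onto its image. A priori two distinct radially aligned graphs could share the same underlying metric graph after the subdivision taking $\mathbf{G}$ to $\mathbf{G}'$; ruling this out requires that the radial alignment be genuinely recorded by the metric, which in turn uses the precise structure of radial alignments (surjectivity onto $\{0,\dots,k\}$, monotonicity of $\rho$ along paths leaving the core, and the compatibility of these conditions with edge contraction) together with a careful analysis of how the subdivision interacts with the face maps of both complexes.
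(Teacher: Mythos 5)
Your framework — embed $\Delta_{1,n}(d)$ into $\Delta_{1,n}^{\mathrm{vir}}(d)$ by ``geometrizing'' the radial alignment and then check the image is invariant under the retraction — is the paper's framework, and the first two steps (constructing the embedding by summing edge lengths across bivalent vertices of the canonical subdivision, and verifying it is a homeomorphism onto the subspace $Z := \{(\mathbf{G},\ell) : d_{\mathrm{min}}(\mathbf{G},\rho_\ell) > 1\}$, where $\rho_\ell$ is the metric-induced radial alignment) are in line with what the paper does. The gap is in the third step, where you assert rather than prove. Two things go wrong. First, your description of the retraction is inaccurate: $H$ sends $(\mathbf{G},\ell)$ to $(\mathbf{G}^{\mathrm{sp}},\ell_t)$, which replaces the graph by its \emph{sprouting} (growing $\delta(v)$ new valence-one, degree-$1$ leaves at each positive-degree vertex $v$ and redistributing all degree onto them) and then applies the affine interpolation $\ell_t(e) = (1-t)\ell(e) + t/d$ on $1$-ends and $\ell_t(e)=(1-t)\ell(e)$ elsewhere. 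It is a single homotopy that grows edges and moves the degree decoration; it is not a finite composition of ``edge-length rescalings and edge-contractions.'' Second, the defining condition of $Z$ is not ``the equalities and inequalities cut out by the radial alignment'' alone; it is the condition $d_{\mathrm{min}}(\mathbf{G},\rho_\ell)>1$, which mixes the metric (through $\rho_\ell$) with the degree decoration and singles out a union of chambers, not a full subcomplex.

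Most importantly, the invariance of $Z$ under $H$ does not follow by ``observing'' that elementary moves preserve these conditions; it requires a specific computation, which is the crux of the paper's proof. For any degree-$1$ vertex $v$ of $\mathbf{G}^{\mathrm{sp}}$ (and all positive degree in $\mathbf{G}^{\mathrm{sp}}$ sits on such vertices), its distance from the core along $\ell_t$ is
\[
p_v(t) = \frac{t}{d} + (1-t)\,p_v(0),
\]
an affine function of $t$ whose coefficients do not depend on $v$. Hence $p_v(t)-p_w(t)=(1-t)\bigl(p_v(0)-p_w(0)\bigr)$, so the ordering of the degree-$1$ vertices by distance is preserved for all $t$ and no ties are broken; therefore $d_{\mathrm{min}}(\mathbf{G}^{\mathrm{sp}},\rho_{\ell_t}) \ge d_{\mathrm{min}}(\mathbf{G},\rho_\ell)$ for all $t$. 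Without this argument the statement is not obvious: a priori $H$ could pull a \emph{single} degree-$1$ leaf strictly closest to the core, forcing $d_{\mathrm{min}}=1$ and exiting $Z$. Your proposal does not rule this out, so the heart of the proposition is missing.
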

The genus-one case of Theorem \ref{thm:mainthm} then follows from Proposition \ref{prop:subspace_preserved}.

\subsection*{Related work}
The dual complex $\Delta_{g, n}$ of the Deligne--Mumford compactification \[\mathcal{M}_{g,n}\subset \Mbar_{g,n}\] has been the subject of intense study in recent years \cite{cgp}, \cite{aluffi_topology_2022}, \cite{cgpsn}. It is naturally interpreted as a moduli space of tropical curves \cite{acp}, \cite{ccuw}, and has rich topology: it has led to the discovery of many more non-algebraic cohomology classes on $\mathcal{M}_{g, n}$ than were previously known.

It is natural to extend this line of inquiry to moduli spaces in the vicinity of $\mathcal{M}_{g,n}$ for which modular normal crossings compactifications are available. This has proven fruitful for moduli spaces of abelian varieties \cite{topweightAg} and moduli spaces of pointed hyperelliptic curves \cite{bck24}. Other potential directions include moduli of abelian differentials on $\mathcal{M}_{g,n}$ \cite{BCGGM, cghms} with its multi-scale and logarithmic compactifications, and universal Picard groups with compactifications given by compactified \cite{melo1} or logarithmic Jacobians \cite{mw}.

Since the moduli spaces mentioned above all admit natural maps to $\mathcal{M}_{g,n},$ we comment on how they interact with the top-weight cohomology groups of $\mathcal{M}_{g,n}.$ In general, a morphism $X\to Y$ induces cohomology pullback on the weight-graded pieces $\mathrm{Gr}^W_{j}H^\star(Y)\to \mathrm{Gr}^W_{j}H^\star(X).$ In particular, when $Y$ is equidimensional, the pullback on top-weight cohomology takes the form $$\mathrm{Gr}^W_{2\dim Y}H^\star(Y)\to \mathrm{Gr}^W_{2\dim Y}H^\star(X).$$ Therefore, when $\dim Y\neq \dim X,$ as in the case of $\mathcal{M}_{g,n}(\mathbb{P}^r,d)\to \mathcal{M}_{g,n}$ for $r>0,$ the pullback map on ordinary cohomology does not send the top-weight cohomology group of $Y$ to that of $X.$ On the other hand, when $X\to Y$ is proper, the proper pullback $W_0 H_c^\star(Y)\to W_0 H_c^\star(X)$ does map between the weight-zero compactly-supported cohomology groups, which are dual to the top weight ordinary cohomology when $X$ and $Y$ are both smooth, by Poincar\'e duality. For instance, this is the case of the universal Picard group $\mathrm{{Pic}}^d_{g,n}\to \mathcal{M}_{g,n},$
However, the morphism $\M_{g, n}(\mathbb{P}^r, d) \to \M_{g,n}$ is not proper when $d > 0$, so there is no way of pulling back compactly-supported cohomology classes.

As far as the topology of mapping spaces, work of Farb--Wolfson \cite{farbwolf} determines the rational cohomology of $\M_{0, n}(\mathbb{P}^r, d)$ with its weight filtration: the genus-zero case of Corollary \ref{cor:maincor} can be deduced from their work, which is influenced by earlier work of Segal \cite{segal} on mapping spaces in genus zero. A recursive algorithm to compute the Betti numbers of $\Mbar_{0, n}(\mathbb{P}^r, d)$ has been given by Getzler--Pandharipande \cite{getzpand}. In their work they also determine the virtual Poincare polynomial of $\M_{0, n}(\mathbb{P}^r, d)$ \cite[Theorem 5.6]{getzpand}. From their calculation it is possible to deduce that the weight zero compactly supported Euler characteristic of $\M_{0, n}(\mathbb{P}^r, d)$ vanishes, as is also implied by Theorem \ref{thm:mainthm}. We computed the $S_n$-equivariant topological Euler characteristic of $\Mbar_{1,n}(\mathbb{P}^r, d)$ in \cite{ksdih}.

The intersection theories of $\M_{0, n}(\mathbb{P}^r, d)$ and $\Mbar_{0, n}(\mathbb{P}^r, d)$ have been studied by Pandharipande \cite{nonlineargrassmannian, Qdivisors} and Behrend--O'Halloran \cite{BOH}. Tautological rings of these spaces have been studied by Oprea \cite{opretorus}. The homology groups of $\Mbar_{g, n}(\mathbb{P}^r, d)$ have been shown to satisfy a form of representation stability \cite{tosteson}, which gives restrictions on the asymptotic behavior of $H_i(\Mbar_{g, n}(\mathbb{P}^r, d); \mathbb{Q})$ as $n \to \infty$. 
\subsection*{Further questions} 
We investigate the dual complexes of two classes of mapping spaces to projective space. There are three natural directions for generalization.
\begin{itemize}
    \item \textit{Genus}: The moduli space $\mathcal{M}_{2,n}(\mathbb{P}^r,d)$ of maps from genus two curves to $\mathbb{P}^r$ admits a modular normal crossings compactification in the work of Battistella and Carocci \cite{battistellacarocci}. As their work builds on the perspective of \cite{rspw}, it would be interesting to describe the strata and the dual complex of their compactification.

    \item \textit{Target}: As we shall see in Section \ref{sec:dualcomplex}, the deformation retraction of the virtual dual complex can be adapted to any (smooth, projective) target that has Picard rank one. It would be interesting to apply this contractibility argument to the boundary complexes of mapping spaces to other targets such as Grassmannians.
    \item \textit{Computation}: The description of the dual complex can be seen as an enrichment of the top-weight cohomology $\mathrm{Gr}^W_{2\delta}H^k(\mathcal{M}_{1,n}(\mathbb{P}^r,d))$ and $\mathrm{Gr}^W_{2\delta}H^k(\mathcal{M}_{0,n}(\mathbb{P}^r,d))$. Another direction of extending the results is to pursue the full cohomology of the spaces $\Mbar_{0,n}(\mathbb{P}^r,d)$, $\widetilde{\M}_{1,n}(\mathbb{P}^r,d)$ and their interiors. This is being carried out in the ongoing work \cite{ds} of the second author. 
\end{itemize}
\subsubsection*{Acknowledgements} We are grateful to Dhruv Ranganathan for encouraging this project and for several helpful conversations. We also benefited from conversations with Melody Chan and Navid Nabijou. We would like to acknowledge comments from anonymous referees which helped us to improve the exposition of the paper. SK is supported by NSF DMS-2401850. TS is supported by a Cambridge Trust international scholarship.

\section{Combinatorial preliminaries}\label{sec:combinatorics}
In this section we establish some graph-theoretic definitions which we will refer back to throughout the paper. The reader may consider skipping this section at first and referring back as necessary, especially if they are familiar with the stratification of the Kontsevich moduli space $\Mbar_{g, n}(\mathbb{P}^r, d)$ by dual graphs.
\begin{defn}\label{defn:gnd-graph}
Let $g, n, d \geq 0$. A $(g, n, d)$-graph is a tuple $\mathbf{G} = (G, w, \delta, m)$ where:
\begin{itemize}
\item $G$ is a connected graph;
\item $w : V(\mathbf{G}) \to \mathbb{Z}_{\geq 0}$ is called the \textit{genus function};
\item $\delta: V(\mathbf{G}) \to \mathbb{Z}_{\geq 0}$ is called the \textit{degree function};
\item $m: \{1, \ldots, n\} \to V(\mathbf{G})$ is called the \textit{marking function}. 
\end{itemize}
These data are required to satisfy:
\begin{enumerate}[(1)]
\item $\dim_{\mathbb{Q}} H_1(G, \mathbb{Q}) + \sum_{v \in V(\mathbf{G})} w(v) = g$;
\item $\sum_{v \in V(\mathbf{G})} \delta(v) = d$.
\end{enumerate}
A $(g, n, d)$-graph is called \textit{stable} if for all vertices $v \in V(\mathbf{G})$ with $\delta(v) = 0$, we have
\[ 2 w(v) - 2 + \mathrm{val}(v) + |m^{-1}(v)| > 0, \]
where $\mathrm{val}(v)$ means the graph valence of $v$. 
\end{defn}

\begin{figure}[h]
    \centering
    \includegraphics[scale=1.3]{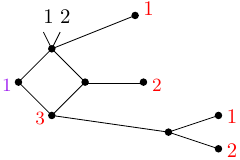}
    \caption{An example of a stable $(2, 2, 9)$-graph $\mathbf{G}$. Red numbers next to vertices indicate the value of $\delta$ on the vertex; other vertices are assumed to have $\delta$-value equal to $0$. The vertex with a purple $1$ is assumed to have $w$-value $1$, while other vertices have $w = 0$. Finally, the function $m$ is visualized by adding labeled half-edges.}
    \label{fig:vir-comb-type-exmp}
\end{figure}

The collection $\Gamma^{\mathrm{ps}}_{g, n}(d)$ of all $(g,n,d)$-graphs forms a category\footnote{Here ``ps" stands for prestable.}, where morphisms are given by compositions of isomorphisms and weighted edge contractions in the sense of \cite[\S 2.2]{cgp}. Particularly important will be the subcategory $\Gamma_{g, n}(d)$ of all stable $(g, n, d)$-graphs. 

\begin{rem}\label{rem-label}
    It can be useful to consider markings $m: M\to V(\mathbf{G})$ from an arbitrary finite set $M$, without reference to $\{1, \ldots, n\}$.
\end{rem}

\begin{rem} We informally explain the morphisms in this category. Contracting an edge $e$ in $\mathbf{G}$ consists of shrinking $e$ to a point, in order to make a new graph $\mathbf{G}/e$. The degree, weight, and marking functions are defined as:\begin{itemize}
    \item If $e$ is not a loop, the two vertices $v_1$ and $v_2$ incident to $e$ are combined into a new vertex $v'$. Then $$\delta(v') = \delta(v_1) + \delta(v_2), w(v') = w(v_1) + w(v_2),$$ and $m^{-1}(v') = m^{-1}(v_1) \cup m^{-1}(v_2)$.
    \item If $e$ is a loop, we increase $w$ on the vertex supporting $e$ by $1$, so as to preserve condition (1) in Definition \ref{defn:gnd-graph}. The degree and marking functions are retained.
\end{itemize} Isomorphisms of $(g, n, d)$-graphs are isomorphisms of the underlying graphs that respect the genus, degree, and marking functions.\end{rem}  

In this paper, we will mainly work with $(g, n, d)$-graphs when $g \leq 1$. In the genus one case, we will often make reference to additional decorations that we now introduce.


\subsection{Radial alignments on genus $1$ graphs}\label{subsec-radial}
The \textit{core} of a $(1, n, d)$-graph is a central notion in this paper.
\begin{defn}
Let $\mathbf{G}$ be a $(1, n, d)$-graph. The \textit{core} of $\mathbf{G}$ is the minimal subgraph of genus $1$, where the genus of a subgraph $G' \subseteq G$ is defined by
\[ \dim_{\mathbb{Q}} H_1(G';\mathbb{Q}) + \sum_{v \in V(G')} w(v). \]
\end{defn}

The set of edges in the core is denoted as $C(\mathbf{G})$, and we use $T(\mathbf{G}):=E(\mathbf{G})\smallsetminus C(\mathbf{G})$ to denote its complement. The set of vertices in the core is denoted by $V^{\mathrm{core}}(\mathbf{G})$ and its complement by $V^{\mathrm{tree}}(\mathbf{G})$. Given a $(1, n, d)$-graph $\mathbf{G}$, we get a rooted tree by contracting all of the edges in the core. A rooted tree has a canonical directed tree structure, where all edges are directed away from the root. Therefore, on $\mathbf{G}$, there is:
\begin{enumerate}
\item a canonical way to orient all edges in $T(\mathbf{G})$;
\item a canonical partial order $<$ on the set of vertices in $V^{\mathrm{tree}}(\mathbf{G})$, which is extended to $V(\mathbf{G})$ by declaring that $v_1 < v_2$ if $v_1$ is in the core and $v_2$ is not.
\end{enumerate}

\begin{rem}\label{rem:vtreeT}
    There is a bijection between $V^{\mathrm{tree}}(\mathbf{G})$ and $T(\mathbf{G})$ given by assigning each vertex $v\in V^{\mathrm{tree}}(\mathbf{G})$ the unique edge $e_{v}$ that connects $v$ to $v'\in V(\mathbf{G})$ with $v'<v.$ Because the contraction of $\mathbf{G}$ by its core is a connected rooted tree, there is a unique such vertex $v'$ and the edge $e_v$ is unique.
\end{rem}

Formally, if $v_1, v_2 \in V^{\mathrm{tree}}(\mathbf{G})$, we have $v_1 < v_2$ if and only if there is a directed path from $v_1$ to $v_2$. This partial order allows us to define \textit{radial alignments}, which are the key combinatorial tool in constructing a modular normal crossings compactification of the space $\M_{1, n}(\mathbb{P}^r, d)$. Informally, we think of a radial alignment as declaring an ordering of the vertices in $V^{\mathrm{tree}}(\mathbf{G})$ by their distance to the core.
\begin{defn}\label{defn-radalign}
Let $\mathbf{G}$ be a $(1, n, d)$-graph. A \textit{radial alignment} of $\mathbf{G}$ is a surjective function $\rho: V(\mathbf{G}) \to \{0, \ldots, k\}$ for some $k \geq 0$ such that
\begin{itemize}
\item $\rho^{-1}(0) = V^{\mathrm{core}}(\mathbf{G})$
\item if $v < w$, then $\rho(v) < \rho(w)$. 
\end{itemize}
\end{defn}

\begin{defn}
An $n$-marked \textit{radially-aligned graph} of genus $1$, degree $d$, and length $k$ is a pair $(\mathbf{G}, \rho)$ where $\mathbf{G} = (G, w, \delta, m)$ is a $(1, n, d)$-graph and $\rho : V(\mathbf{G}) \to \{0, \ldots, k\}$ is a radial alignment of $\mathbf{G}$. We say that $(\mathbf{G}, \rho)$ is \textit{stable} if $\mathbf{G}$ is stable as a $(1, n, d)$-graph. An \textit{isomorphism} $\psi: (\mathbf{G}, \rho) \to (\mathbf{G}', \rho')$ is an isomorphism of the underlying $(1, n, d)$-graphs such that the diagram
\[ \begin{tikzcd}
V(\mathbf{G}) \arrow[d, "\psi"] \arrow[r, "\rho"] & \{0, \ldots, k\} \\
V(\mathbf{G}') \arrow[ur, "\rho'"']  & 
\end{tikzcd}\]
commutes.
\end{defn}

\begin{figure}[h]
    \centering
    \includegraphics[scale = 1.2]{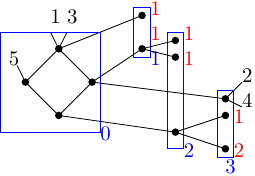}
    \caption{A stable radially-aligned $5$-marked graph $\mathbf{G}$ of genus $1$, degree $7$, and length $3$. All vertices have $g(v) = 0$, and the red labels indicate the $\delta$ degree of a vertex. The blue boxes indicate the fibers of the surjection $V(\mathbf{G}) \to \{0, 1, 2, 3\}$, and the labeled half-edges indicate the marking function $m$.}
    \label{fig:comb-type-exmp}
\end{figure}

Given an $n$-marked radially-aligned graph $\mathbf{G}$, there are several ways to subdivide its edges.

\begin{defn}\label{subdivision}
Let $\mathbf{G}$ be a stable $n$-marked radially-aligned graph of genus $1$, degree $d$, and length $k$.
\begin{enumerate}
\item We define the \textit{canonical subdivision} $\hat{\mathbf{G}}$ of $\mathbf{G}$ as follows: if $e \in E(\mathbf{G})$ is an edge outside the core, such that $e$ is directed from $v_1$ to $v_2$, with $\rho(v_1) = i$ and $\rho(v_2) = j$ with $i < j$, we subdivide $e$ by adding $j - i - 1$ bivalent vertices, and setting the genus and $\delta$-degree of each new vertex to be $0$.
\item For a fixed index $r \in \{1, \ldots, k\}$, we define the \textit{subdivision at radius $r$}: for each edge $e$ outside the core of $\mathbf{G}$, if $e$ is a directed from $v_1$ to $v_2$ with $\rho(v_1) < r$ and $\rho(v_2) > r$, we add one bivalent vertex to $e$, setting the genus and $\delta$-degree of each new vertex to be $0$. Also, for each marking $j \in \{1, \ldots, n\}$ such that $\rho(m(j)) < r$, we add a new vertex $v_j$ connected by an edge $e_j$ to the vertex $m(j)$, and then change the marking function so that it sends $j$ to $v_j$. We write $\hat{\mathbf{G}}_r$ for the resulting graph; it has a radial alignment $\rho_r$ which is defined to be the same as $\rho$ on the vertices which were already in $\mathbf{G}$, and such that $\rho_r(v_{\mathrm{new}}) = r$ for all new vertices $v_{\mathrm{new}}$ added to $\mathbf{G}$.
\end{enumerate}
\end{defn}

\begin{rem}
Note the asymmetry in the definitions of the canonical subdivision $\hat{\mathbf{G}}$ and the subdivision $\hat{\mathbf{G}}_r$ at radius $r$: in $\hat{\mathbf{G}}_r$, half-edges corresponding to marked points are subdivided, whereas they are left unchanged in the canonical subdivision $\hat{\mathbf{G}}$. 

Informally, these processes can be visualized as follows: for the subdivision at radius $r$, draw a circle around the core of $\mathbf{G}$ so that it intersects each vertex $v \in V(\mathbf{G})$ with $\rho(v) = r$, and so that all vertices with $\rho(v) < r$ are inside the circle, and all vertices with $\rho(v) > r$ are outside the circle. In this set-up, half-edges corresponding to marked points are understood to have infinite length. Then the circle intersects $\mathbf{G}$ once for every half-edge corresponding to markings $j$ with $\rho(m(j)) < r$ and once for every edge $e$ directed from $v_1$ to $v_2$ with $\rho(v_1) < r$ and $\rho(v_2) > r$, and bivalent genus-zero vertices are added at these points of intersection. 

The canonical subdivision can be visualized similarly, except that half-edges are not subdivided. See Figures \ref{fig:comb-type-subdiv} and \ref{fig:radius-subdiv} for examples of carrying out these procedures on the radially-aligned stable graph $\mathbf{G}$ from Figure \ref{fig:comb-type-exmp}.
\end{rem}

\begin{figure}
    \centering
    \includegraphics[scale=1.3]{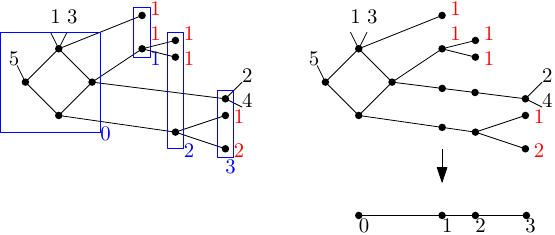}
    \caption{The canonical subdivision $\hat{\mathbf{G}}$ of the graph $\mathbf{G}$ from Figure \ref{fig:comb-type-exmp}, along with the map $\hat{\rho}: \hat{\mathbf{G}} \to P_{3}$ induced by the radial alignment.}
    \label{fig:comb-type-subdiv}
\end{figure}

\begin{figure}
    \centering
    \includegraphics[scale=1.3]{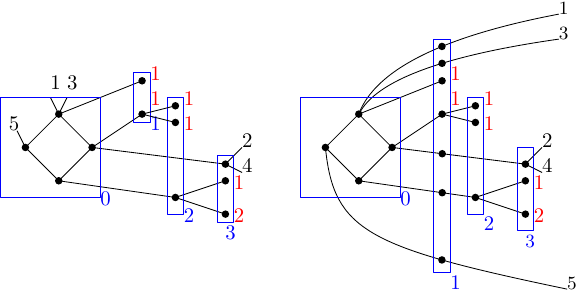}
    \caption{The subdivision $\hat{\mathbf{G}}_1$ at radius $1$ of the graph $\mathbf{G}$ from Figure \ref{fig:comb-type-exmp}, along with the induced radial alignment.}
    \label{fig:radius-subdiv}
\end{figure}

Note that $\hat{\mathbf{G}}$ and $\hat{\mathbf{G}}_r$ are still radially-aligned $n$-marked graphs of genus $1$, but they are no longer stable. The significance of $\hat{\mathbf{G}}_r$ will become clear when we discuss the relevant moduli problem. The conceptual significance of the definition of the \textit{canonical} subdivision $\hat{\mathbf{G}}$ is that it allows us to view the radial alignment $\rho$ on $\mathbf{G}$ as a map of graphs $\hat{\rho}: \hat{\mathbf{G}} \to P_{k}$, where $P_k$ is a path with $k$ edges, whose vertices are labelled by $\{0, \ldots, k\}$ in the natural order from left to right. Here $\hat{\rho}$ contracts the core of $\mathbf{G}$ to the leftmost vertex of $P_k$, and maps edges outside of the core to edges of $P_k$. The value of this perspective will become clear momentarily, when we define \textit{radial merges} below. Radial merges are the analogue of the standard edge contractions in the setting of radially-aligned graphs.

\begin{defn}
Let $\mathbf{G} = (G, w, m, \delta, \rho)$ be a stable $n$-marked radially-aligned graph of genus $1$ and degree $d$. Suppose $\mathbf{G}$ has length $k$. Given an integer $i \in \{1, \ldots, k\}$, define the \textit{radial merge} of $\mathbf{G}$ along $i$ as follows:
\begin{enumerate}

\item post-compose $\rho$ with the surjection $\{0, \ldots, k\} \to \{0, \ldots, k - 1\}$ which fixes all $j<i$ and decreases all $j \geq i$ by $1$;

\item whenever $v, w \in V(\mathbf{G})$ with $v \in f^{-1}(i - 1)$ and $w \in f^{-1}(i)$, such that there is an edge $e$ between $v$ and $w$, perform the edge contraction of $e$.  
\end{enumerate}

The collection of all stable $n$-marked radially-aligned graphs of genus $1$ and degree $d$ form a category $\Gamma_{1, n}^\rad(d)$, where the morphisms are compositions of isomorphisms, contractions of edges in the core, and the radial merges. Similarly, the collection of all radially-aligned $(1,n,d)$-graphs (not necessarily stable) forms a category $\Gamma_{1, n}^{\rad,\mathrm{ps}}(d)$ with the same classes of morphisms.
\end{defn}

An intuitive way to understand radial merges is that they are determined by pulling back edge contractions of the path $P_k$ under the map $\hat{\mathbf{G}} \to P_k$ from the canonical subdivision induced by the radial alignment. The radial merge along $i \in \{1, \ldots, k\}$ can be visualized as follows: contract the edge between vertices $i - 1$ and $i$ of $P_k$, and contract all edges in its preimage to obtain an edge contraction of $\hat{\mathbf{G}}$. The radial merge of $\mathbf{G}$ along $i$ is obtained by removing all bivalent, degree zero vertices in the resulting graph.

\begin{figure}[h]
    \centering
    \includegraphics[scale=1.2]{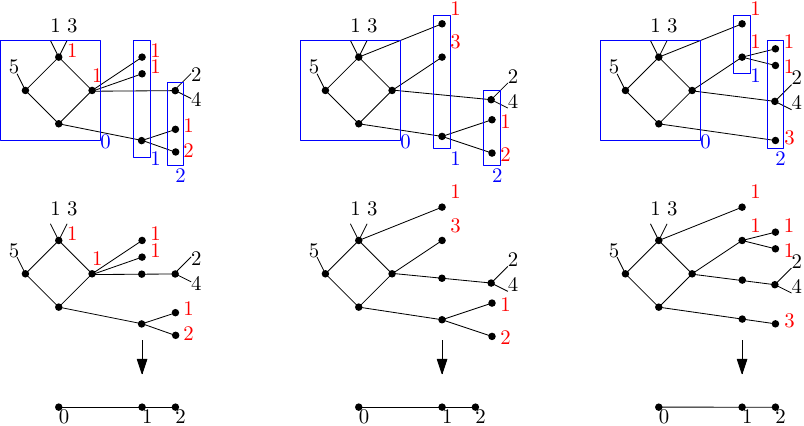}
    \caption{From left to right: the radial merges of the graph $\mathbf{G}$ from Figure \ref{fig:comb-type-exmp} along $1$, $2$, and $3$, above the corresponding canonical subdivisions and maps to $P_2$.}
    \label{fig:rad-merge-exmp}
\end{figure}

The final combinatorial notion that will be important for us is that of a \textit{contraction radius} associated to a radially-aligned graph when $d > 0$.
\begin{defn}\label{def:minimal-radius}
Given an $n$-marked radially-aligned stable graph $(\mathbf{G}, \rho) $ of genus $1$, degree $d > 0$, and length $k$, define the \textit{contraction radius} of $(\mathbf{G},\rho)$ by
\[ \mathrm{rad}(\mathbf{G}, \rho) := \min \left\{ j \in \{0, \ldots, k\} \mid \sum_{v \in \rho^{-1}(j)} \delta(v) \neq 0 \right\}, \]
and define the \textit{degree of the contraction radius} by 
\[ d_{\mathrm{min}}(\mathbf{G}, \rho):= \sum_{v \in \rho^{-1}(\mathrm{rad}(\mathbf{G}, \rho))} \delta(v).  \]
Write $\tilde{\Gamma}_{1, n}^\rad(d)$ for the full subcategory of $\Gamma_{1, n}^\rad(d)$ defined by those graphs with $d_{\mathrm{min}}(\mathbf{G}, \rho) > 1$.
\end{defn}
\begin{rem}\label{rem:face_closure}
   An important fact about the subcategory $\tilde{\Gamma}_{1, n}^\rad(d)$ of Definition \ref{def:minimal-radius} is that it is closed under taking core edge contractions and radial merges. This is because the quantity $d_{\mathrm{min}}(\mathbf{G}, \rho)$ does not decrease under these operations.
\end{rem}

\section{Modular compactifications of mapping spaces}\label{sec:compactifications}
In this section, we motivate and define the Vakil--Zinger type genus one mapping space $\widetilde{\M}_{1,n}(\mathbb{P}^r,d)$ as constructed in \cite{rspw}\footnote{The space $\widetilde{\M}_{1,n}(\mathbb{P}^r,d)$ is termed as $\VZ_{1,n}(\mathbb{P}^r,d)$ in \cite{rspw}. As explained in \S4.6 of loc. lit., it is in general not identical to the construction in \cite{vakilzinger}. Therefore, we have chosen the notation $\widetilde{\M}_{1,n}(\mathbb{P}^r,d)$ to avoid confusion.}. These mapping spaces are constructed to \textit{desingularize} the main component of the \textit{Kontsevich stable map} space, which we now recall.

\begin{defn}
An \textit{$n$-marked stable map to $\mathbb{P}^r$} is a map $f: (C, p_1, \ldots, p_n) \to \mathbb{P}^r$, such that $C$ is a proper connected curve with at worst nodal singularities, and $p_i \in C$ are distinct smooth points, for $i = 1, \ldots, n$. We impose the stability condition that on all irreducible components $T \subset C$ such that $T$ is contracted by $f$, we have
\[2 g(T) - 2 + |T \cap \overline{C \smallsetminus T}| + |\{i \mid p_i \in T \}| > 0, \]
where $g(T)$ denotes the arithmetic genus of $T$. The \textit{genus} of the map $f$ is the arithmetic genus of $C$, and the degree of the map is the unique integer $d$ such that $f_*[C] = dL$, where $[C]$ is the fundamental class of the curve and $L \in H_2(\mathbb{P}^r; \mathbb{Z})$ is the class of a line. We write
\[ \Mbar_{g, n}(\mathbb{P}^r, d) \]
for the moduli space of all $n$-pointed stable maps to $\mathbb{P}^r$ of genus $g$ and degree $d$. 
\end{defn}

Recording the degree and genus assignments on the domain curves gives a stratification of $\Mbar_{g, n}(\mathbb{P}^r, d)$ indexed by $(g,n,d)$-graphs. However, unlike the compactification $\M_{g,n}\subset \overline{\M}_{g,n}$, strata dimensions of $(g,n,d)$-graphs can be \textit{larger} than the dimension of the interior $\mathcal{M}_{g,n}(\mathbb{P}^r,d)$, as shown by the example below. This is a combinatorial source of pathologies of the stable maps compactification $\Mbar_{g,n}(\mathbb{P}^r,d)$.

\begin{defn}
    Given a $(g,n,d)$-graph $\mathbf{G},$ we use $\mathcal{K}(\mathbf{G})$ to denote the locally closed stratum in $\Mbar_{g,n}(\P^r,d)$ associated to $\mathbf{G}.$ 
\end{defn}

\begin{exmp}
Let $d>1$, and let $\mathbf{G}_{1,n,d}^{\mathrm{sp}}$ be the $(1,n,d)$-graph consisting of a single genus one vertex supporting all $n$ markings and degree zero, connected to $d$ copies of genus-zero, degree 1 vertices, each by a single edge. The stratum $\mathcal{K}(\mathbf{G}_{1,n,d}^{\mathrm{sp}})$ in the stable map space is a finite quotient of $$\mathcal{M}_{1,n+d}(r,0)\times_{\mathbb{P}^r}\left( \mathcal{M}_{0,1}(\mathbb{P}^r,1)\right)^d,$$ which has dimension $n + r + d(r+2)$. On the other hand, the interior $\mathcal{M}_{1,n}(\mathbb{P}^r,d)$ has dimension $n + d(r+1)$.
\end{exmp}

Informally, the work \cite{rspw} reduces the dimension of such boundary strata by imposing constraints on certain tangent vectors of maps---the so-called \textit{factorization property}. However, more combinatorial data is needed to specify where the tangency constraints are imposed. In turn, parametrizing the extra tropical data leads to a refinement of the combinatorial types $(1,n,d)$-graphs by \textit{radially-aligned} $(1,n,d)$-graphs. 

On the level of spaces, the refinement induces \textit{strata blow-ups} of the stable maps space, and the desired mapping space is identified as the closed subscheme in the blow-up cut out by the constraints. After explaining the motivation of the construction, we now turn to the technical details.

\subsection{radially-aligned prestable curves}\label{sec:mfrad}
Let $\Mfr_{1,n}$ be the stack of prestable curves with genus one and $n$ marked points. It is stratified by dual graphs of genus one and $n$ marked points, which we identify as (not necessarily stable) $(1,n,0)$-graphs. On a combinatorial level, forgetting the radial alignment defines a functor $\Gamma^{\rad,\mathrm{ps}}_{1,n}(0)\to \Gamma^{\mathrm{ps}}_{1,n}(0)$. In particular, this gives a functor on the level of their underlying partially ordered sets.

Following \cite[§6]{ccuw}, this functor induces a morphism of \textit{Artin fans} $$\mathcal{A}_{\Gamma^{\rad,\mathrm{ps}}_{1,n}(0)}\to \mathcal{A}_{\Gamma^{\mathrm{ps}}_{1,n}(0)}.$$ On the other hand, the $(1,n,0)$-stratification of $\Mfr_{1,n}$ is the stratification that underlies a \textit{logarithmic structure} on $\Mfr_{1,n}$. Therefore, there is a morphism of algebraic stacks $\Mfr_{1,n}\to \mathcal{A}_{\Gamma^{\mathrm{ps}}_{1,n}(0)}$.

\begin{defn}\label{defn-radps}
    The moduli stack of radially-aligned prestable curves $\Mfr_{1,n}^{\rad}$ is defined as the fiber product \Cartesiansquare{\Mfr_{1,n}^{\rad}}{\Mfr_{1,n}}{\mathcal{A}_{\Gamma^{\mathrm{rad},\mathrm{ps}}_{1,n}(0)}}{\mathcal{A}_{\Gamma^{\mathrm{ps}}_{1,n}(0)}}
\end{defn}

\begin{rem}\label{pathlength}
    It is helpful to recall that up to the graph automorphisms $\mathrm{Aut}(\mathbf{G}),$ the (Artin) cones in the Artin fan $\mathcal{A}_{\Gamma^{\mathrm{ps}}_{1,n}(0)}$ are of the form $\left(\mathbb{A}^1/\mathbb{G}_{m}\right)^{|E(\mathbf{G})|}$ for each $\mathbf{G} \in \mathrm{Ob}(\Gamma_{1,n}(0))$ glued along morphisms in $\Gamma^{\mathrm{ps}}_{1,n}(0)$. Via the dictionary between Artin fans and cone stacks\footnote{As explained in  \cite[§2]{ccuw}, cone stacks are closely related to generalized cone complexes from \cite{acp}.} in \cite[§6]{ccuw}, each such Artin cone corresponds to a polyhedral cone of the form $\mathbb{R}_{\geq 0}^{|E(\mathbf{G})|}$ in the cone stack $\Sigma_{\Gamma^{\mathrm{ps}}_{1,n}(0)}$ associated to the Artin fan $\mathcal{A}_{\Gamma^{\mathrm{ps}}_{1,n}(0)}$.

    Each non-core vertex $x\in V^{\mathrm{tree}}(\mathbf{G})$ gives rise to a piecewise linear function on the cone $\mathsf{dis}_x: \mathbb{R}_{\geq 0}^{|E(\mathbf{G})|}\to \mathbb{R}_{\geq 0}$ that measures the distance of $x$ from the core with the edge lengths: $$\ell\mapsto\sum_{e\in P} \ell(e),$$ where $P$ is the unique minimal path connecting $x$ to the core.
        
    As explained in \cite[Proposition 3.3.4]{rspw}, the morphism $\mathcal{A}_{\Gamma^{\rad,\mathrm{ps}}_{1,n}(0)}\to \mathcal{A}_{\Gamma^{\mathrm{ps}}_{1,n}(0)}$ is locally the toric blow-up that is induced by subdividing the cones $\mathbb{R}_{\geq 0}^{|E(\mathbf{G})|}$ along the loci where a tie of distances takes place $$\{\ell\in \mathbb{R}_{\geq 0}^{|E(\mathbf{G})|}: \mathsf{dis}_x = \mathsf{dis}_y\}_{x,y\in V^{\mathrm{tree}}(\mathbf{G})}.$$ In other words, the subdivision is the minimal one such that each cone in the subdivision has an unambiguous ordering of the functions $\{\mathsf{dis}_x\}_{x\in V^{\mathrm{tree}}(\mathbf{G})}$. The unambiguous ordering is precisely the \textit{radial alignment} data introduced in the previous section.
\end{rem}

 For a down-to-earth understanding of the stack $\Mfr_{1, n}^{\rad}$, the following definition is useful. Write $\Mfr(\mathbf{G})$ for the locally closed stratum of $\Mfr_{1, n}$ of curves with dual graph equal to $\mathbf{G}$.

\begin{defn}\label{nodes} Suppose $(C, p_1, \ldots, p_n) \in \Mfr(\mathbf{G})$ where $\mathbf{G}$ is a prestable $(1, n, 0)$-graph, let $v \in V^{\mathrm{tree}}(\mathbf{G})$, and let $e \in T(\mathbf{G})$ be the unique edge that connects $v$ to some vertex $w$ such that $w < v$ in the canonical partial order on $V(\mathbf{G})$. On the level of curves, let $C_v$ and ${\mu}_{v}$ be the component and node associated to $v$ and $e_{v}$, respectively. \end{defn}

\begin{figure}[h]
    \centering
    \includegraphics[scale=1.3]{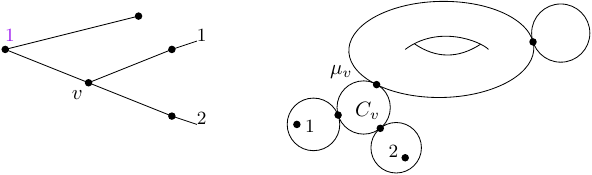}
    \caption{A $2$-marked genus one prestable curve $C$, a vertex $v$ of its dual graph, and the corresponding irreducible component $C_v$ and node ${\mu}_v$.}
    \label{fig:node-fig}
\end{figure}

A point of $\Mfr_{1, n}^{\rad}$ can be thought of as a tuple $(C, p_1, \ldots, p_n, \rho, L_1, \ldots, L_k)$ where $(C, p_1, \ldots, p_n)$ is an $n$-pointed prestable curve of genus $1$, $\rho: V(\mathbf{G}) \to \{0, \ldots, k\}$ is a radial alignment of the dual graph $\mathbf{G}$ of $C$, and
\[L_i \subset \bigoplus_{v \in \rho^{-1}(i)} T_{{\mu}_v} C_v \]
is a line which is not contained in any coordinate subspace. This description follows from Lemma \ref{lem-curvetorus}.

These lines $L_i$ determine contractions $\eta_i: \tilde{C}_i \to \overline{C}_i$, where $\tilde{C}_i \to C$ is a logarithmic modification with dual graph $\hat{\mathbf{G}}_i$ (Definition \ref{subdivision}), and $\eta_i$ contracts those components of $\tilde{C}_i$ corresponding to vertices with $\rho(v) < i$ to an elliptic singularity. These contractions to elliptic singularities are discussed in more detail in ~\cite[\S 3]{rspw}.


\subsection{radially-aligned stable maps} Let $\Mbar_{1,n}(\mathbb{P}^r,d)\to \Mfr_{1,n}$ be the forgetful map, and let $\Mfr_{1,n}^\rad\to \Mfr_{1,n}$ be the logarithmic modification as described above. The space $\Mbar^{\rad}_{1,n}(\mathbb{P}^r,d)$ is defined by the fiber product \Cartesiansquare{\Mbar^{\rad}_{1,n}(\mathbb{P}^r,d)}{\Mbar_{1,n}(\mathbb{P}^r,d)}{\Mfr_{1,n}^\rad}{\Mfr_{1,n}}

The strata of $\Mbar^{\rad}_{1,n}(\mathbb{P}^r,d)$ are indexed by pairs $(\mathbf{G},\rho)$ for a stable $(1,n,d)$-graph $\mathbf{G}$ together with a radial alignment $\rho$. We also recall that the degree labeling gives the \textit{contraction radius} (Definition \ref{def:minimal-radius}) $\mathrm{rad}(\mathbf{G}, \rho)$ and the associated subdivision $\hat{\mathbf{G}}_{\mathrm{rad}(\mathbf{G}, \rho)}$ along the minimal radius.

The Vakil--Zinger mapping space $\widetilde{\mathcal{M}}_{1,n}(\mathbb{P}^r,d)$ is cut out in the fiber product $\Mbar^{\mathrm{rad}}_{1,n}(\mathbb{P}^r,d)$ by the \textit{factorization property}. To define this, let \begin{equation}\label{eq:radial_aligned_map}(f: (C, p_1, \ldots, p_n) \to \mathbb{P}^r,\rho, L_1, \ldots, L_k)
\end{equation} be a radially-aligned stable map, i.e., a point in the fiber product $\Mbar^{\rad}_{1,n}(\mathbb{P}^r,d)$. The contraction radius $\mathrm{rad}(\mathbf{G}, \rho)$ together with the alignment data induces a pair of maps $$C\longleftarrow \widetilde{C}_{\mathrm{rad}(\mathbf{G}, \rho)}\xrightarrow{\eta_{\rad(\mathbf{G, \rho)}}} \overline{C}_{\mathrm{rad}(\mathbf{G}, \rho)},$$ where $\widetilde{C}_{\mathrm{rad}(\mathbf{G}, \rho)}\to C$ is the logarithmic modification that is induced by the subdivision $\hat{\mathbf{G}}_{\mathrm{rad}(\mathbf{G}, \rho)}$, and $\widetilde{C}_{\mathrm{rad}(\mathbf{G}, \rho)}\to \overline{C}_{\mathrm{rad}(\mathbf{G}, \rho)}$ is a contraction to an elliptic singularity.
\begin{defn}\label{defn-facprop}
The space $\widetilde{\mathcal{M}}_{1,n}(\mathbb{P}^r,d)$ is the locus in $\Mbar^{\rad}_{1,n}(\mathbb{P}^r,d)$ where the composition \[\widetilde{C}_{\mathrm{rad}(\mathbf{G}, \rho)}\to \mathbb{P}^r\] factors through $\overline{C}_{\mathrm{rad}(\mathbf{G}, \rho)}\to \mathbb{P}^r$. We call this condition the \textit{factorization property}. We equip the space $\widetilde{\mathcal{M}}_{1,n}(\mathbb{P}^r,d)$ with the pullback stratification from $\Mbar^{\rad}_{1,n}(\mathbb{P}^r,d)_{(\mathbf{G},\rho)}$, i.e., the strata are $$\widetilde{\M}(\mathbf{G}, \rho) := \Mbar^{\rad}_{1,n}(\mathbb{P}^r,d)_{(\mathbf{G},\rho)}\cap \widetilde{\mathcal{M}}_{1,n}(\mathbb{P}^r,d).$$
\end{defn}

A radially-aligned map $f$ as in (\ref{eq:radial_aligned_map}) satisfies the factorization property if and only if the kernel of the linear map
\[ \bigoplus_{v \in \rho^{-1}(\rad(\mathbf{G}, \rho))} T_{{\mu}_v} C_v \to T_p(\mathbb{P}^r) \]
induced by the derivative of $f$ contains the line $L_{\rad(\mathbf{G}, \rho)}$. See \cite[\S 2.4]{bnr} for a detailed discussion of how this relates to elliptic singularities.

Pleasing geometric properties of $\widetilde{\mathcal{M}}_{1,n}(\mathbb{P}^r,d)$ are witnessed by logarithmic deformation theory:
\begin{thm}\label{thm-logsm}
    The strata $\widetilde{\M}(\mathbf{G}, \rho)$ form the underlying stratification of a logarithmically smooth log structure on $\widetilde{\mathcal{M}}_{1,n}(\mathbb{P}^r,d)$. In particular, each stratum $\widetilde{\M}(\mathbf{G}, \rho)$ is smooth, and the compactification $\mathcal{M}_{1,n}(\mathbb{P}^r,d)\subset \widetilde{\mathcal{M}}_{1,n}(\mathbb{P}^r,d)$ has normal crossings boundary. 
\end{thm}
\begin{proof}
    This statement is implicit in \cite[\S4.5]{rspw}, but we include the following proof for completeness. To begin with, the stack of prestable radially-aligned curves $\mathfrak{M}_{1,n}^{\rad}$ is constructed as a logarithmic modification of $\mathfrak{M}_{1,n}$ \cite[Prop. 3.3.4]{rspw}: indeed, as explained in Remark \ref{pathlength}, the logarithmic modification is induced by a subdivision of polyhedral cone complexes and is locally given by toric blow-ups. Therefore, as $\mathfrak{M}_{1,n}$ is logarithmically smooth with normal crossings boundary, so is $\mathfrak{M}_{1,n}^{\rad}.$ 
    
    The universal Picard stack $\mathfrak{Pic}^{d}_{\mathfrak{M}_{1,n}^{\rad}}$ is smooth over $\mathfrak{M}_{1,n}^{\rad},$ because it is pulled back from the universal Picard stack on $\Mfr_{1,n}.$ Let $\mathfrak{Pic}^{d, \geq 0}_{\mathfrak{M}_{1,n}^{\rad}}$ be the open substack of $\mathfrak{Pic}^{d}_{\mathfrak{M}_{1,n}^{\rad}}$ parametrizing line bundles that have non-negative degree on each component, and let $\pi: \mathfrak{Pic}^{d, \geq 0}_{\mathfrak{M}_{1,n}^{\rad}}\to \mathfrak{M}_{1,n}^{\rad}$ be the forgetful map. For a prestable, radially-aligned $(1,n,0)$-graph $(\mathbf{G}_0,\rho),$ denote the locally closed substack associated to $(\mathbf{G}_0,\rho)$ as $\mathfrak{M}^{\rad}(\mathbf{G}_0,\rho)\subset \mathfrak{M}_{1,n}^{\rad},$ then $\pi^{-1}(\mathfrak{M}^{\rad}(\mathbf{G}_0,\rho))$ has connected components indexed by degree decorations $\delta: V(\mathbf{G}_0)\to \mathbb{Z}_{\geq 0}$ on $(\mathbf{G}_0,\rho)$ with total degree $d$; in other words, the connected components are precisely the radially-aligned $(1,n,d)$-graphs. In summary,  $\mathfrak{Pic}^{d, \geq 0}_{\mathfrak{M}_{1,n}^{\rad}}$ is smooth and logarithmically smooth with normal crossing boundary that is stratified by radially-aligned $(1,n,d)$-graphs.
    
    Finally, \cite[Remark 4.5.3]{rspw} states that the map from $\widetilde{\mathcal{M}}_{1,n}(\mathbb{P}^r,d)$ to the universal Picard stack over $\Mfr_{1,n}^{\mathrm{rad}}$ is smooth, and this map clearly factors through the open substack $\mathfrak{Pic}^{d, \geq 0}_{\mathfrak{M}_{1,n}^{\rad}}.$ Therefore, $\widetilde{\mathcal{M}}_{1,n}(\mathbb{P}^r,d)$ has the same properties as the ones of $\mathfrak{Pic}^{d, \geq 0}_{\mathfrak{M}_{1,n}^{\rad}},$ as desired. The smoothness of strata in $\widetilde{\mathcal{M}}_{1,n}(\mathbb{P}^r,d)$ follows from general facts about logarithmically smooth spaces.
\end{proof}

\section{Geometry of graph strata}\label{sec:graph_strata}

After reviewing the relevant moduli spaces in the previous section, we use the combinatorial gadgets in Section \ref{sec:combinatorics} to give a more explicit description of the strata of $\widetilde{\M}_{1,n}(\mathbb{P}^r,d)$. 

\subsection{Graph strata of $\Mfr_{1, n}^\rad$} We can stratify $\Mfr_{1, n}^\rad$ by radially-aligned dual graphs:

\begin{defn}
The \textit{combinatorial type} of a point in $\Mfr_{1, n}^{\rad}$ is the pair $(\mathbf{G}, \rho)$ consisting of the dual graph of the curve (which is a $(1, n, 0)$-graph) and the radial alignment $\rho$. Recall that we have the following combinatorial data associated to a combinatorial type:
\begin{enumerate}[(1)] 
\item If the codomain of $\rho$ is equal to $\{0, \ldots, k\}$ for $k \geq 0$,  we refer to $k$ as the \textit{length} of the radial alignment, and we write $\ell(\rho)$ for the length;
\item We write $C(\mathbf{G})$ for the set of edges of $\mathbf{G}$ contained in the core, and we write $T(\mathbf{G}) = E(\mathbf{G})\smallsetminus C(\mathbf{G})$ for the set of edges of $\mathbf{G}$ outside the core.
\item The set of vertices in the core is denoted by $V^{\mathrm{core}}(\mathbf{G})$ and its complement by $V^{\mathrm{tree}}(\mathbf{G})$.
\end{enumerate}
\end{defn}
As explained in Section \ref{subsec-radial}, we can think of a radial alignment as the data of an ordered partition of $T(\mathbf{G})$. Given a combinatorial type $(\mathbf{G}, \rho)$, let
\[\Mfr^\rad(\mathbf{G}, \rho) \subset \Mfr_{1,n}^\rad \]
denote the locally closed stratum of curves with combinatorial type equal to $(\mathbf{G}, \rho)$. This stratum maps to the stratum $\Mfr(\mathbf{G}) \subset \Mfr_{1, n}$ of prestable curves with dual graph equal to $\mathbf{G}$.

We now give a modular interpretation of the strata blow-up $\Mfr^{\rad}_{1,n}\to \Mfr_{1,n}$ from Definition \ref{defn-radps}:
\begin{lem}\label{lem-curvetorus}
Suppose $\rho$ is a radial alignment of $\mathbf{G}$. Then the map $\Mfr^\rad(\mathbf{G}, \rho) \to \Mfr(\mathbf{G})$ is a torsor with structure group
$$\mathbb{G}_m^{|V^{\mathrm{tree}}(\mathbf{G})| - \ell(\rho)} = \prod_{i=1}^{\ell(\rho)}\mathbb{G}_m^{|\rho^{-1}(i)| - 1}.$$ Furthermore, each factor $\mathbb{G}_m^{|\rho^{-1}(i)| - 1}$ of the structure group has the following two equivalent characterizations: \begin{itemize}\item Collections of isomorphisms $$\{T_{{\mu}_{v}}C_v\cong T_{{\mu}_{w}}C_w\}_{v,w\in \rho^{-1}(i)}$$ that are compatible under compositions.
\item Generic lines in the direct sum $\bigoplus_{v\in \rho^{-1}(i)}T_{{\mu}_v}C_v$, i.e., the dense torus in $\mathbb{P}\left(\bigoplus_{v\in \rho^{-1}(i)}T_{{\mu}_v}C_v\right)$.

\end{itemize}
\end{lem}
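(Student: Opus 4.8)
The plan is to understand the morphism $\Mfr^\rad(\mathbf{G},\rho)\to\Mfr(\mathbf{G})$ by unwinding the fiber-product definition of $\Mfr_{1,n}^\rad$ (Definition \ref{defn-radps}) on the locus of curves with fixed combinatorial type $\mathbf{G}$, using the toric-blow-up description recalled in Remark \ref{pathlength}. First I would localize: étale-locally near a point of $\Mfr(\mathbf{G})$, the map to $\mathcal{A}_{\Gamma^{\mathrm{ps}}_{1,n}(0)}$ factors through the Artin cone $[\mathbb{A}^{|E(\mathbf{G})|}/\mathbb{G}_m^{|E(\mathbf{G})|}]$, the smoothing parameters of the nodes being a choice of coordinates. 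Pulling back the subdivision from Remark \ref{pathlength} — the minimal subdivision on which the distance functions $\{\mathsf{dis}_x\}_{x\in V^{\mathrm{tree}}}$ become totally ordered — and passing to the open stratum where the ordering is exactly the one recorded by $\rho$, the fiber product computes $\Mfr^\rad(\mathbf{G},\rho)$ as a $\mathbb{G}_m^N$-torsor over $\Mfr(\mathbf{G})$, where $N$ is the difference between $|E(\mathbf{G})|$ and the dimension of the relevant cone in the subdivision. The combinatorial heart is the identity $N = |T(\mathbf{G})| - \ell(\rho) = \sum_{i=1}^{\ell(\rho)}(|\rho^{-1}(i)|-1)$: core edges contribute nothing, and within each radius level $i$ the curves are constrained by one linear tie relating the distances of the $|\rho^{-1}(i)|$ vertices at that level, leaving $|\rho^{-1}(i)|-1$ torus parameters. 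This is essentially a bookkeeping computation with the piecewise-linear distance functions, and since $\hat{\mathbf{G}}\to P_k$ contracts the core, only the tree edges enter.

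Next I would identify the structure group concretely. A point of the fiber lying over $(C,p_1,\dots,p_n)\in\Mfr(\mathbf{G})$ is, by Remark \ref{pathlength} and the discussion following Definition \ref{nodes}, the extra data needed to totally order the distances and resolve the ties at each level. At radius $i$, the tie among the vertices $v\in\rho^{-1}(i)$ is resolved by a choice of scalings of the smoothing parameters of the nodes $\nu_v$ relative to one another — equivalently, after taking derivatives, a compatible system of isomorphisms $\{T_{\nu_v}C_v\cong T_{\nu_w}C_w\}_{v,w\in\rho^{-1}(i)}$. Such a compatible system is exactly a point of the kernel torus of $\mathbb{G}_m^{\rho^{-1}(i)}\to\mathbb{G}_m$ given by the product of coordinates, which is $\mathbb{G}_m^{|\rho^{-1}(i)|-1}$; it is also, tautologically, the dense torus of $\mathbb{P}(\bigoplus_{v\in\rho^{-1}(i)}T_{\nu_v}C_v)$, since a line not meeting any coordinate hyperplane is the same as a point of $(\mathbb{G}_m)^{\rho^{-1}(i)}$ modulo scaling. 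I would verify the two bijections carefully and check they are $\mathbb{G}_m^{|\rho^{-1}(i)|-1}$-equivariant, so that all three descriptions agree as torsors. Taking the product over $i=1,\dots,\ell(\rho)$ gives the stated structure group.

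The main obstacle I anticipate is the first step: making precise the claim that the fiber product in Definition \ref{defn-radps}, restricted to the stratum of combinatorial type $(\mathbf{G},\rho)$, is literally a trivial-on-charts $\mathbb{G}_m^N$-torsor rather than merely étale-locally one, and matching the $N$-dimensional torus appearing from the toric subdivision with the product $\prod_i\mathbb{G}_m^{|\rho^{-1}(i)|-1}$. This requires care because the Artin fan morphism of Remark \ref{pathlength} is only described locally (``locally the toric blow-up''), so I would either cite \cite[Proposition 3.3.4]{rspw} for the local model and then glue, or argue directly that the torus quotients are canonical — the isomorphisms $T_{\nu_v}C_v\cong T_{\nu_w}C_w$ are intrinsic to the curve, independent of the chart — which both establishes the gluing and simultaneously proves the second bullet characterization. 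The remaining verifications (equivariance, the $\mathbb{P}(\cdot)$ description, and that ``generic'' means avoiding coordinate subspaces) are routine.
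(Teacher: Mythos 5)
Your overall strategy matches the paper's: both proceed through the Artin-fan / toric-blow-up description from Remark \ref{pathlength}, and your combinatorial bookkeeping (that $|T(\mathbf{G})| - \ell(\rho) = \sum_{i=1}^{\ell(\rho)}(|\rho^{-1}(i)|-1)$, using that the number of non-core vertices equals the number of non-core edges) is correct. However, the key step --- translating the torus fiber of the Artin-fan subdivision into compatible isomorphisms of the tangent spaces $T_{\nu_v}C_v$ --- is where the real content of the lemma lies, and your treatment of it has a genuine gap. You write ``a choice of scalings of the smoothing parameters of the nodes $\nu_v$ relative to one another --- equivalently, after taking derivatives, a compatible system of isomorphisms $\{T_{\nu_v}C_v\cong T_{\nu_w}C_w\}$,'' but the smoothing parameter of a node $\nu_v$ lives intrinsically in the tensor product of the cotangent lines of the \emph{two} branches meeting at $\nu_v$, not in $T_{\nu_v}C_v$ alone, and ``taking derivatives'' does not resolve this. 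Moreover the distance function $\mathsf{dis}_x$ is a sum of edge lengths over the whole path $P_x$ from $x$ to the core, so the associated line bundle is a tensor product over all nodes along that path; it is not at all immediate that a ratio of two such tensor products collapses to an isomorphism between single tangent lines.

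The paper's proof supplies exactly the missing device. It converts each piecewise-linear function $\mathsf{dis}_x$ into a toric line bundle $\mathcal{L}(P_x)$ on the Artin fan, observes that the torus fiber of $\mathcal{A}_{(\mathbf{G},\rho)}\to\mathcal{A}_{\mathbf{G}}$ is a collection of compatible isomorphisms among the $\mathcal{L}(P_x)$, pulls back along the curve to obtain $\mathcal{O}_C(P_x)$, and then cites \cite[Lemma 4.15]{bnr}: the fiber of $\mathcal{O}_C(P_x)$ is naturally $T_{\nu_v}C_v\otimes\mathbb{T}$, where $\mathbb{T}$ is the universal tangent line on the core, canonically defined because the core is a group object in log schemes. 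The $\mathbb{T}$ factor cancels in each ratio, producing the desired isomorphisms $T_{\nu_v}C_v\cong T_{\nu_w}C_w$, and the identification with the dense torus of $\mathbb{P}\bigl(\bigoplus_{v\in\rho^{-1}(i)}T_{\nu_v}C_v\bigr)$ is then taken from the discussion in \cite[\S 1]{bnr}. Your instinct to ``argue directly that the torus quotients are canonical'' is well-placed as a worry, but this canonicity is precisely what \cite[Lemma 4.15]{bnr} establishes, and without it or an equivalent telescoping identification, the two intrinsic characterizations of the structure group are not proved.
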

\begin{proof}Recall from Remark \ref{pathlength} that the strata blow-up is pulled back from the morphism of Artin fans $\mathcal{A}_{\Gamma^{\rad, \mathrm{ps}}_{1,n}(0)}\to \mathcal{A}_{\Gamma^{\mathrm{ps}}_{1,n}(0)}$, which in turn is induced by subdividing along $\{\mathsf{dis}_x = \mathsf{dis}_y\}_{x,y\in V^{\mathrm{tree}}(\mathbf{G})}$ on the level of cones. 

The following discussion expands \cite[Lemma 3.3.2]{rspw} and explains how the distance functions are related to line bundles on the prestable curves. Via the correspondence between piecewise linear functions and (toric) line bundles, the two functions $\mathsf{dis}_x$ and $\mathsf{dis}_y$ produce line bundles on $\mathcal{A}_{\Gamma^{\mathrm{ps}}_{1,n}(0)}$ that we denote as $\mathcal{L}(P_x)$ and $\mathcal{L}(P_y)$ respectively. 

Let $BT^{(\mathbf{G},\rho)}\subset \mathcal{A}_{(\mathbf{G}, \rho)}$ and $BT^{\mathbf{G}}\subset \mathcal{A}_{\mathbf{G}}$ denote the minimal closed substacks of the Artin cones $\mathcal{A}_{(\mathbf{G}, \rho)}$ and $ \mathcal{A}_{\mathbf{G}}$ respectively: they are classifying stacks\footnote{Working with étale local toric charts, they are quotient stacks of the torus fixed points on the toric varieties.} of algebraic tori that have dimensions equal to the dimensions of the cones associated to $(\mathbf{G}, \rho)$ and $\mathbf{G},$ respectively, which we denote as $T^{(\mathbf{G},\rho)}$ resp. $T^{\mathbf{G}}$. Note that because the cone of $(\mathbf{G},\rho)$ comes from subdividing that of $\mathbf{G},$ we have $\dim T^{(\mathbf{G},\rho)}\leq \dim T^{\mathbf{G}}.$ The map $\mathcal{A}_{(\mathbf{G}, \rho)}\to  \mathcal{A}_{\mathbf{G}}$ restricts to a map $BT^{(\mathbf{G},\rho)}\to BT^{\mathbf{G}}.$ Via the general dictionary for passing between subdivisions, logarithmic modifications, and toric blow-ups, the fiber of the map $BT^{(\mathbf{G},\rho)}\to BT^{\mathbf{G}}$ is the algebraic torus that parametrizes precisely a collection of compatible isomorphisms of line bundles $\{\mathcal{L}(P_x)\cong\mathcal{L}(P_y)\}_{x,y\in \rho^{-1}(i)}$ for each $i>0$.

Pulling back to the strata blow-ups, the compatible isomorphisms become that of $$\{\mathcal{O}_{\mathfrak{M}(\mathbf{G})}(P_x)\cong \mathcal{O}_{\mathfrak{M}(\mathbf{G})}(P_y)\}_{x,y\in \rho^{-1}(i)}$$ for each $i>0$. 

We unravel the isomorphisms of the line bundles by recalling the following general fact: see \cite[§5.5]{MPS} for more detail. Let $\mathbf{G}$ be a prestable $(1,n,0)$-graph, $e\in E(\mathbf{G}),$ and let $\mathbf{G}/e$ be the edge contraction. Then ${\Mfr(\mathbf{G})}$ is the interior of a divisor in the stratum closure $\overline{\Mfr(\mathbf{G}/e)}.$ On the other hand, the edge length of $e$ defines a piecewise linear function on the polyhedral cone $\mathbb{R}_{\geq 0}^{|E(\mathbf{G})|}/\mathrm{Aut}(\mathbf{G})$ in the cone stack $\Sigma_{\Gamma_{1,n}^{\mathrm{ps}}(0)},$ and the corresponding line bundle pulls back to the normal bundle of ${\Mfr(\mathbf{G})}$ in $\overline{\Mfr(\mathbf{G}/e)}$: more precisely, we take the normal bundle of the divisor and then restrict it to ${\Mfr(\mathbf{G})}.$ By the definition given in Remark \ref{pathlength}, the piecewise linear functions $\mathsf{dis}_x$ and $\mathsf{dis}_y$ are the lengths of the unique paths $P_x$ and $P_y$ that connect the vertices $x$ and $y$ to the core. Therefore, $\mathcal{L}(P_x)$ and $\mathcal{L}(P_y)$ pull back to the tensor product of the normal bundles, one for each edge in $P_x$ resp. $P_y.$ 

Lemma 4.15 of \cite{bnr} states that $\mathcal{L}(P_x)$ resp. $\mathcal{L}(P_y)$ can be identified with the following line bundles on ${\Mfr(\mathbf{G})}$ that we now construct. Let $e_x$ resp. $\tilde{e}_x$ be the first and last edge of the unique minimal path that connects $x$ to the core. Similarly define edges $e_y$ and $\tilde{e}_y.$ 

Let $\mathbf{G}_{\geq x}$ be the subgraph consisting of the vertices that are greater or equal to $x$ with respect to the partial order, together with one leg $\ell_x^{\geq}$ added that corresponds to `cutting' the edge $e_x.$ Similar for $\mathbf{G}_{\geq y}$ and $\ell_y^{\geq}.$ On the other hand, let $\mathbf{G}_{\smallsetminus \tilde{e}_x}$ be the subgraph that replaces the path leading to and including $\mathbf{G}_{\geq x}$ by a leg $\ell_x^{\leq}$ that corresponds to `cutting' the edge $\tilde{e}_x.$ Similar for $\mathbf{G}_{\smallsetminus \tilde{e}_y}$ and $\ell_y^{\leq}.$

Cutting the edges $e_x,\tilde{e}_x$ and $e_y,\tilde{e}_y$ gives forgetful morphisms $$\mathfrak{M}(\mathbf{G})\to \mathfrak{M}(\mathbf{G}_{\geq x})\times \mathfrak{M}(\mathbf{G}_{\smallsetminus \tilde{e}_x}),$$ $$\mathfrak{M}(\mathbf{G})\to \mathfrak{M}(\mathbf{G}_{\geq y})\times \mathfrak{M}(\mathbf{G}_{\smallsetminus \tilde{e}_y}).$$ We denote the universal curves over the above moduli stacks of curves as $\mathfrak{C}\to \mathfrak{M}.$ Observe that the distinguished legs $\ell$ in all four occasions correspond to marked points that give sections $\ell: \mathfrak{M}\to \mathfrak{C}.$ Let $N_{\ell}$ be the line bundles on $\mathfrak{M}$ associated to the normal bundles of $\ell: \mathfrak{M}\to \mathfrak{C}.$ Then \cite[Lemma 4.15]{bnr} states that $$\mathcal{L}(P_x)\cong N_{\ell_x^{\leq}}\otimes N_{\ell_x^{\geq}}, \mathcal{L}(P_y)\cong N_{\ell_y^{\leq}}\otimes N_{\ell_y^{\geq}}.$$ Let $C\in \Mfr(\mathbf{G}),$ let $(C_{\smallsetminus \tilde{e}_x}, \mu_{x}^{\leq})$ be the marked subcurve of $C$ corresponding to $(\mathbf{G}_{\smallsetminus \tilde{e}_x}, \ell_{x}^{\leq}),$ and let $(C_x, \mu_x)$ be as before, which corresponds to $(\mathbf{G}_{\geq x}, \ell_{x}^{\geq}).$ Then the fiber of $N_{\ell_x^{\leq}}$ at $C$ corresponds to $T_{\mu_{x}^{\leq}}C_{\smallsetminus \tilde{e}_x},$ and the fiber of $N_{\ell_x^{\geq}}$ at $C$ corresponds to $T_{\mu_{x}}C_{x}.$ As explained after the statement of \cite[Lemma 4.15]{bnr}, the fact that we work over genus one prestable curves provides canonical isomorphisms $N_{\ell_x^{\leq}}\cong N_{\ell_{y}^{\leq}},$ and this line bundle is denoted as $\mathbb{T}$ by the authors in \cite{bnr}. Therefore, the isomorphisms $\mathcal{L}(P_x)\cong \mathcal{L}(P_y)$ are equivalent to isomorphisms $N_{\ell_x^{\geq}}\cong N_{\ell_y^{\geq}}:$ again, over each point $C\in \Mfr(\mathbf{G}),$ this is the data of an isomorphism $T_{\mu_x}C_x\cong T_{\mu_y}C_y.$

Finally, as explained in \cite[§2]{bnr} (after Corollary 2.2), a collection of compatible isomorphisms among $\{T_{{\mu}_{v}}C_v\}_{v\in \rho^{-1}(i)}$ is identified with a point in the dense torus in $\mathbb{P}\left(\bigoplus_{v\in \rho^{-1}(i)}T_{{\mu}_v}C_v\right)$ by taking isomorphisms $$\left(\theta_{1j}: T_{{\mu}_{v_1}}C_{v_1}\xrightarrow{\cong}T_{{\mu}_{v_j}}C_{v_j}\right)_{v_j\in \rho^{-1}(i)\smallsetminus\{v_1\}}$$ to the image line $$(\mathrm{id}, \theta_{ij}):T_{{\mu}_{v_1}}C_{v_1}\to \bigoplus_{v\in \rho^{-1}(i)}T_{{\mu}_{v}}C_{v}.$$
\end{proof}

\begin{cor}
    The codimension of $\Mfr^\rad(\mathbf{G}, \rho)$ in $\Mfr_{1, n}^\rad$ is \[|E(\mathbf{G})| - |V^\mathrm{tree}(\mathbf{G})| + \ell(\rho) =  |C(\mathbf{G})| + \ell(\rho).\]
\end{cor}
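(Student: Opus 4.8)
The plan is to derive the formula directly from Lemma~\ref{lem-curvetorus} by a straightforward count of stack dimensions, feeding in two standard facts about the moduli stack of prestable curves.

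First I would note that $\Mfr^{\rad}_{1,n}\to \Mfr_{1,n}$ is a logarithmic modification (Remark~\ref{pathlength}): it is proper, representable, and restricts to an isomorphism over the dense open locus $\M_{1,n}\subset \Mfr_{1,n}$ of smooth curves, where the logarithmic structure is trivial. Since $\Mfr_{1,n}$ is irreducible (being smooth and connected), so is $\Mfr^{\rad}_{1,n}$, and the two have equal dimension. Consequently, for any irreducible locally closed substack $Z \subseteq \Mfr^{\rad}_{1,n}$ one has $\operatorname{codim}(Z, \Mfr^{\rad}_{1,n}) = \dim \Mfr_{1,n} - \dim Z$.

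Second, I would invoke the classical description of the dual-graph stratification: the locally closed stratum $\Mfr(\mathbf{G}) \subseteq \Mfr_{1,n}$ of curves with dual graph $\mathbf{G}$ is irreducible of codimension $|E(\mathbf{G})|$, since the $|E(\mathbf{G})|$ nodes of a general member of the stratum can be smoothed independently and these smoothings span the transverse directions (see, e.g., \cite{ccuw} or \cite{cgp}). In particular $\dim \Mfr(\mathbf{G}) = \dim \Mfr_{1,n} - |E(\mathbf{G})|$. Finally, Lemma~\ref{lem-curvetorus} exhibits $\Mfr^{\rad}(\mathbf{G},\rho) \to \Mfr(\mathbf{G})$ as a $\mathbb{G}_m^{|T(\mathbf{G})|-\ell(\rho)}$-torsor; hence it is smooth and surjective of relative dimension $|T(\mathbf{G})|-\ell(\rho)$, and $\Mfr^{\rad}(\mathbf{G},\rho)$ is irreducible (a torsor under a connected group over an irreducible base). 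Putting the three inputs together,
\[
\operatorname{codim}\bigl(\Mfr^{\rad}(\mathbf{G},\rho),\,\Mfr^{\rad}_{1,n}\bigr) = \dim\Mfr_{1,n} - \dim\Mfr(\mathbf{G}) - \bigl(|T(\mathbf{G})|-\ell(\rho)\bigr) = |E(\mathbf{G})| - |T(\mathbf{G})| + \ell(\rho),
\]
and the second equality in the statement is just the partition $E(\mathbf{G}) = C(\mathbf{G}) \sqcup T(\mathbf{G})$.

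I do not expect a genuine obstacle here: the substance is entirely contained in Lemma~\ref{lem-curvetorus}, which is already proved. The only steps requiring care are bookkeeping-level — confirming that a logarithmic modification of the irreducible smooth stack $\Mfr_{1,n}$ preserves dimension, and pinning down a correct reference for the codimension-$|E(\mathbf{G})|$ formula for $\Mfr(\mathbf{G})$ — so the main "work" is simply making sure those citations are stated accurately.
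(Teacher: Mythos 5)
Your proposal is correct and is essentially the argument the paper intends: the corollary is stated without proof precisely because it is meant to follow from Lemma~\ref{lem-curvetorus} by the dimension count you give, using that $\Mfr^{\rad}_{1,n}\to\Mfr_{1,n}$ is birational (a log modification) and that $\Mfr(\mathbf{G})$ has codimension $|E(\mathbf{G})|$ in $\Mfr_{1,n}$.
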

\begin{proof}
    From the previous Lemma, the map $\Mfr^\rad(\mathbf{G}, \rho)\to \Mfr(\mathbf{G})$ has fibers with dimension $|V^\mathrm{tree}(\mathbf{G})|-\ell(p),$ hence $\dim \Mfr^\rad(\mathbf{G}, \rho) = \dim \Mfr(\mathbf{G}) + |V^\mathrm{tree}(\mathbf{G})|-\ell(p).$ On the other hand, because $\Mfr^{\rad}_{1,n}\to \Mfr_{1,n}$ is birational, $\dim \Mfr^{\rad}_{1,n} = \Mfr_{1,n}.$ Therefore, \begin{align*}
        \mathrm{codim}(\Mfr^\rad(\mathbf{G}, \rho)\subset \Mfr^{\rad}_{1,n})& =\dim \Mfr^{\rad}_{1,n} - \dim \Mfr^\rad(\mathbf{G}, \rho)\\ &=  \dim  \Mfr_{1,n} - (\dim \Mfr(\mathbf{G}) + |V^\mathrm{tree}(\mathbf{G})|-\ell(p)) \\ & = \mathrm{codim}(\mathfrak{M}(\mathbf{G})\subset \mathfrak{M}_{1,n}) + |V^\mathrm{tree}(\mathbf{G})|-\ell(p) \\& = |E(\mathbf{G})| - |T(\mathbf{G})| + \ell(p),
    \end{align*}
    where the last equality uses the fact that $\mathrm{codim}(\mathfrak{M}(\mathbf{G})\subset \mathfrak{M}_{1,n}) = |E(\mathbf{G})|.$ Finally, from Remark \ref{rem:vtreeT}, the bijection between $V^{\mathrm{tree}}(\mathbf{G})$ and $T(\mathbf{G})$ implies $|V^{\mathrm{tree}}(\mathbf{G})| = |T(\mathbf{G})|,$ so that $|E(\mathbf{G})| - |V^\mathrm{tree}(\mathbf{G})| = |C(\mathbf{G})|$ as claimed.
\end{proof}
\subsection{Strata of mapping spaces}\label{map}
Let $(\mathbf{G},\rho)\in \Gamma^{\mathrm{rad}}_{1,n}(d)$ be a radially-aligned stable $(1,n,d)$-graph. It specifies a stratum $\mathcal{M}(\mathbf{G},\rho)\subset \Mbar^{\rad}_{1,n}(\mathbb{P}^r,d)$.
\begin{lem}\label{lem:mapfibbun}
The restriction of the strata blow-up $\Mbar^{\rad}_{1,n}(\mathbb{P}^r,d)\to \Mbar_{1,n}(\mathbb{P}^r,d)$ to the stratum $\mathcal{M}(\mathbf{G},\rho)$ is a $\mathbb{G}_m^{|V^{tree}(\mathbf{G})|-\ell(\rho)}$-fiber bundle.
\end{lem}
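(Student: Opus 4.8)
The plan is to exhibit $\M(\mathbf{G},\rho) \to \M(\mathbf{G})$ as a base change of the torsor of Lemma \ref{lem-curvetorus}; there is no serious obstacle here, and the lemma is essentially a corollary of that one. Let $\mathbf{G}_0$ denote the $(1,n,0)$-graph underlying $\mathbf{G}$, obtained by forgetting the degree function $\delta$. It is the dual graph of the prestable curve beneath any stable map of combinatorial type $\mathbf{G}$, so the forgetful morphism $\Mbar_{1,n}(\mathbb{P}^r,d) \to \Mfr_{1,n}$ carries the stratum $\M(\mathbf{G})$ into the locally closed substack $\Mfr(\mathbf{G}_0)$. A radial alignment of $\mathbf{G}$ is by definition a radial alignment of $\mathbf{G}_0$; the core, hence the complementary edge set $T(\mathbf{G}) = T(\mathbf{G}_0)$, and the length $\ell(\rho)$ are all unaffected by forgetting degrees, so the fiber claimed here agrees with the one in Lemma \ref{lem-curvetorus}.

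First I would unwind the fiber product $\Mbar^{\rad}_{1,n}(\mathbb{P}^r,d) = \Mbar_{1,n}(\mathbb{P}^r,d) \times_{\Mfr_{1,n}} \Mfr^{\rad}_{1,n}$ along the strata. A point of this fiber product is a pair consisting of a stable map and a compatible point of $\Mfr^{\rad}_{1,n}$, and by construction it lies in $\M(\mathbf{G},\rho)$ precisely when the stable map lies in $\M(\mathbf{G})$ and the point of $\Mfr^{\rad}_{1,n}$ has combinatorial type $(\mathbf{G}_0,\rho)$. Hence
\[
\M(\mathbf{G},\rho) \;=\; \M(\mathbf{G}) \times_{\Mfr_{1,n}} \Mfr^{\rad}(\mathbf{G}_0,\rho) \;=\; \M(\mathbf{G}) \times_{\Mfr(\mathbf{G}_0)} \Mfr^{\rad}(\mathbf{G}_0,\rho),
\]
the second equality because $\Mfr^{\rad}(\mathbf{G}_0,\rho) \to \Mfr_{1,n}$ factors through $\Mfr(\mathbf{G}_0)$ and $\M(\mathbf{G})$ already maps into $\Mfr(\mathbf{G}_0)$. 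Under this identification the restriction of the strata blow-up, $\M(\mathbf{G},\rho)\to\M(\mathbf{G})$, is exactly the pullback of $\Mfr^{\rad}(\mathbf{G}_0,\rho)\to\Mfr(\mathbf{G}_0)$ along $\M(\mathbf{G}) \to \Mfr(\mathbf{G}_0)$.

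By Lemma \ref{lem-curvetorus}, $\Mfr^{\rad}(\mathbf{G}_0,\rho)\to\Mfr(\mathbf{G}_0)$ is a $\mathbb{G}_m^{|T(\mathbf{G}_0)| - \ell(\rho)}$-torsor. Torsors under an affine group scheme are preserved by arbitrary base change, so $\M(\mathbf{G},\rho)\to\M(\mathbf{G})$ is a $\mathbb{G}_m^{|T(\mathbf{G})| - \ell(\rho)}$-torsor. Finally, a $\mathbb{G}_m^{N}$-torsor is, on any scheme chart, a fiber product of $N$ complements of zero sections of line bundles and hence Zariski-locally trivial (Hilbert's Theorem 90); in particular it is a fiber bundle with fiber $\mathbb{G}_m^{|T(\mathbf{G})|-\ell(\rho)}$, which is the assertion.

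The only step that is not pure fiber-product formalism is the identification of $\M(\mathbf{G},\rho)$ in the display above: one must check that the pullback stratification on $\Mbar^{\rad}_{1,n}(\mathbb{P}^r,d)$ indeed refines the stable-map strata along the strata of $\Mfr^{\rad}_{1,n}$, so that the combinatorial type $(\mathbf{G},\rho)$ of a point of the fiber product is equivalent to the data of the stable-map stratum $\M(\mathbf{G})$ below it together with the combinatorial type $(\mathbf{G}_0,\rho)$ of its image in $\Mfr^{\rad}_{1,n}$. This is immediate from how the stratification was set up in Section \ref{sec:compactifications} — the $\delta$-decoration comes from the stable map, the radial alignment from $\Mfr^{\rad}_{1,n}$ — but it is the one place where one looks past the formalism, and I would expect this bookkeeping, rather than anything about torsors, to be the only point requiring attention.
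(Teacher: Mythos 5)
Your proposal is correct and matches the paper's proof: the paper also deduces the lemma from Lemma \ref{lem-curvetorus} by base change, exhibiting $\M(\mathbf{G},\rho)\to\M(\mathbf{G})$ as the pullback of $\Mfr^{\rad}(\mathbf{G}_0,\rho)\to\Mfr(\mathbf{G}_0)$ along $\M(\mathbf{G})\to\Mfr(\mathbf{G}_0)$. Your write-up is a bit more careful than the paper's (introducing $\mathbf{G}_0$ explicitly and spelling out why the torsor is Zariski-locally trivial), but the argument is the same.
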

\begin{proof}
Recall from Lemma \ref{lem-curvetorus} that the map on the moduli of prestable curves $\Mfr(\mathbf{G},\rho)\to \Mfr(\mathbf{G})$ is a $\mathbb{G}_m^{|T(\mathbf{G})|-\ell(\rho)}$-fiber bundle. The desired statement follows from taking the fiber product \Cartesiansquare{\mathcal{M}(\mathbf{G},\rho)}{\mathcal{K}(\mathbf{G})}{\Mfr(\mathbf{G},\rho)}{\Mfr(\mathbf{G}).}
\end{proof}

Recall that \[\widetilde{\M}(\mathbf{G},\rho) = \M(\mathbf{G}, \rho) \cap \widetilde{\M}_{1, n}(\mathbb{P}^r, d) \subseteq \widetilde{\M}_{1, n}(\mathbb{P}^r, d).\] 

    We conclude this section by noting how these graph strata fit together.
    \begin{thm}\label{thm: PosetStructure}
        We have a containment \[\widetilde{\M}(\mathbf{G}, \rho) \subseteq \overline{\widetilde{\M}(\mathbf{G}', \rho')}\] if and only if there is a morphism $(\mathbf{G}, \rho) \to (\mathbf{G}', \rho')$ in $\tgamma^\rad_{1, n}(d)$.
    \end{thm}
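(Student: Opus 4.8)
The plan is to prove the containment statement by reducing it, via the fiber-product descriptions established in Section~\ref{sec:graph_strata}, to the analogous statement for $\Mfr_{1,n}^{\rad}$, and then to the known poset structure of the Kontsevich space $\Mbar_{1,n}(\mathbb{P}^r,d)$ together with the combinatorics of the blow-up $\Mfr_{1,n}^{\rad}\to\Mfr_{1,n}$. First I would recall that $\widetilde{\M}(\mathbf{G},\rho)$ is by definition the intersection $\M(\mathbf{G},\rho)\cap\widetilde{\M}_{1,n}(\mathbb{P}^r,d)$, and that $\M(\mathbf{G},\rho)$ is the stratum of $\Mbar^{\rad}_{1,n}(\mathbb{P}^r,d)$ obtained by pulling back $\Mfr^{\rad}(\mathbf{G},\rho)\subset\Mfr_{1,n}^{\rad}$. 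So the closure relations among the $\M(\mathbf{G},\rho)$ are detected by closure relations among the $\Mfr^{\rad}(\mathbf{G},\rho)$, i.e.\ by morphisms in $\Gamma^{\rad,\mathrm{ps}}_{1,n}(0)$; combined with the degree decoration (which only ever adds up under edge contractions and radial merges, and is constrained so that $d_{\min}$ does not decrease, as noted after Definition~\ref{def:minimal-radius}), this gives that $\M(\mathbf{G},\rho)\subseteq\overline{\M(\mathbf{G}',\rho')}$ iff there is a morphism $(\mathbf{G},\rho)\to(\mathbf{G}',\rho')$ in $\tgamma^{\rad}_{1,n}(d)$.

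The forward-compatibility ($(\Leftarrow)$) direction is then the statement that a morphism $(\mathbf{G},\rho)\to(\mathbf{G}',\rho')$ — a composition of core-edge contractions and radial merges — corresponds to a specialization, i.e.\ $(\mathbf{G}',\rho')$ appears in the boundary of $(\mathbf{G},\rho)$. For core-edge contractions this is the usual smoothing of a node in the Kontsevich space, which lifts to $\Mbar^{\rad}$ since nothing happens to the alignment data; for radial merges, I would use the toric/tropical picture of Remark~\ref{pathlength}: a radial merge along $i$ is the pullback of an edge contraction of the path $P_k$ under $\hat{\mathbf{G}}\to P_k$, and collapsing that edge of $P_k$ (i.e.\ letting two consecutive ``radii'' coincide) is exactly moving to a face of the subdivided cone, hence to a more degenerate stratum of $\Mfr_{1,n}^{\rad}$. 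One then checks the factorization locus is preserved: a map in $\widetilde{\M}(\mathbf{G},\rho)$ degenerates to a map which still satisfies the factorization property at the (possibly changed) contraction radius of $(\mathbf{G}',\rho')$, using that $d_{\min}$ is unchanged along morphisms of $\tgamma^{\rad}_{1,n}(d)$ so the contraction radius behaves predictably, and that the factorization condition is a closed condition cut out inside $\Mbar^{\rad}$.

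The converse ($(\Rightarrow)$) direction requires showing that every component of the boundary of $\widetilde{\M}(\mathbf{G},\rho)$ is of the form $\widetilde{\M}(\mathbf{G}',\rho')$ for an actual morphism in $\tgamma^{\rad}_{1,n}(d)$ — equivalently, that no ``new'' strata appear and that $\widetilde{\M}(\mathbf{G}',\rho')$ is genuinely nonempty and genuinely in the closure. Here I would invoke Theorem~\ref{thm-logsm}: since $\widetilde{\M}_{1,n}(\mathbb{P}^r,d)$ carries a logarithmically smooth log structure whose underlying stratification is precisely the $(\mathbf{G},\rho)$-stratification, the boundary is normal crossings and its stratum-closure poset is governed by the poset of the log structure's monoid, which by construction is $\tgamma^{\rad}_{1,n}(d)$ (with $d_{\min}>1$ exactly picking out the nonempty strata — this is the content of the combinatorial criterion in Theorem~\ref{thm:determination}, which we may use). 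The connectedness of the strata from Lemma~\ref{lem-surj-dep} ensures that a morphism of graphs yields an honest containment of irreducible closures rather than merely a set-theoretic incidence. I expect the main obstacle to be the bookkeeping in the radial-merge case: verifying precisely that when two radii collapse, the induced contraction-to-elliptic-singularity maps $\eta_i$ degenerate compatibly and the factorization property for $(\mathbf{G},\rho)$ specializes to the factorization property for $(\mathbf{G}',\rho')$ at its own contraction radius, rather than to something weaker or stronger — this is exactly where the constraint $d_{\min}>1$ and the detailed geometry of \cite{rspw,bnr} enter.
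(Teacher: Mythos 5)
Your proposal has a serious gap in its first reduction step, and the paper avoids this gap by a different (and much cleaner) structural observation.

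You begin by claiming that ``the closure relations among the $\M(\mathbf{G},\rho)$ are detected by closure relations among the $\Mfr^{\rad}(\mathbf{G},\rho)$,'' and from this you conclude that $\M(\mathbf{G},\rho)\subseteq\overline{\M(\mathbf{G}',\rho')}$ whenever there is a morphism $(\mathbf{G},\rho)\to(\mathbf{G}',\rho')$ of radially aligned decorated graphs. This is false, and it is false precisely because of the pathologies of the Kontsevich space $\Mbar_{1,n}(\mathbb{P}^r,d)$ that the whole Vakil--Zinger/RSPW construction is designed to work around. The map $\Mbar^{\rad}_{1,n}(\mathbb{P}^r,d)\to\Mfr^{\rad}_{1,n}$ is not flat (let alone smooth), so closure relations on the source are not controlled by closure relations on the target. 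Concretely, a stratum of $\Mbar_{1,n}(\mathbb{P}^r,d)$ where the genus one core is contracted and the rational tails carry all the degree is \emph{not} contained in the closure of the interior $\M_{1,n}(\mathbb{P}^r,d)$, even though the obvious edge contraction gives a morphism of graphs to the one-vertex graph; only the locus where the factorization condition holds lies in that closure. So the poset statement you want to bootstrap from fails already at the level of $\Mbar^{\rad}$, and your $(\Leftarrow)$ direction collapses.

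The paper's proof sidesteps the Kontsevich space entirely. The key input is \cite[Theorem 4.5.1]{rspw}: the forgetful map $\widetilde{\M}_{1,n}(\mathbb{P}^r,d)\to\Mfr_{1,n}^{\rad}$ is \emph{smooth} (not just logarithmically smooth; this is stronger than and different from Theorem~\ref{thm-logsm}, which you invoke only for the $(\Rightarrow)$ direction). Smoothness gives the infinitesimal lifting criterion: any one-parameter smoothing of the underlying radially aligned prestable curve lifts to a one-parameter smoothing of the map. Since the stack $\Mfr_{1,n}^{\rad}$ is a toroidal/logarithmically smooth modification of $\Mfr_{1,n}$, its stratum poset genuinely matches the morphisms in the radially aligned graph category; smoothness then transports this poset structure to $\widetilde{\M}_{1,n}(\mathbb{P}^r,d)$ in one step, with no need to track the factorization locus by hand (it is preserved automatically because you never leave $\widetilde{\M}$). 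The degree decoration on the lifted smoothing is forced by constancy of the Hilbert polynomial, which pins down the morphism in $\tgamma^{\rad}_{1,n}(d)$. Your later discussion of the tropical picture, of $d_{\min}$, and of the elliptic singularity contractions $\eta_i$ is not wrong in spirit, but it is doing a lot of work that the single smoothness statement makes unnecessary, and it cannot repair the broken first step.
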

    \begin{proof}
        The statement is equivalent to showing that the stratum $\widetilde{\M}(\mathbf{G},\rho)$ admits a smoothing to $\widetilde{\M}(\mathbf{G}',\rho')$ if and only if there is a morphism $(\mathbf{G}, \rho) \to (\mathbf{G}', \rho')$ in $\tgamma^\rad_{1, n}(d)$.

        We observe that the analogous statement holds for the stack of radially-aligned prestable curves. As the stack $\Mfr_{1,n}^{\rad}\to \Mfr_{1,n}$ is a strata blow-up, it is logarithmically smooth. Therefore, the containments of strata in $\Mfr_{1,n}^{\rad}$ correspond to morphisms in the category $\tilde{\Gamma}^{\mathrm{ps}}_{1,n}(0)$. 

        The morphism $\widetilde{\M}_{1,n}(\mathbb{P}^r,d)\to \Mfr_{1,n}^{\rad}$ is smooth from \cite[Theorem 4.5.1]{rspw}. The smoothness implies that given a smoothing of the prestable, radially-aligned domain curves, we may always lift this to a smoothing of the maps. Hence $\widetilde{\M}(\mathbf{G},\rho)$ smoothes to $\widetilde{\M}(\mathbf{G}',\rho')$ if and only there is some lift of a morphism between the two underlying radially-aligned graphs. As a smoothing preserves the degree of the map (indeed the Hilbert polynomial), such a lift has the degree assignments determined by the underlying morphism of aligned graphs and must come from a morphism in $\tilde{\Gamma}_{1,n}^\rad(d)$, as desired.
    \end{proof}
\section{Irreducibility of strata}\label{sec:connectedness}
The goal of this section is to prove the first part of Theorem \ref{thm:determination}, which states that the locally closed strata $\widetilde{\M}(\mathbf{G},\rho)$ of $\widetilde{\M}_{1,n}(\mathbb{P}^r,d)$ are irreducible. We also give a criterion for when such a stratum is non-empty. These results are summarized in Theorem \ref{conprop} below. Both are crucial for determining the dual complex of $\M_{1,n}(\mathbb{P}^r,d)\subset \widetilde{\M}_{1,n}(\mathbb{P}^r,d)$. 

For $(\mathbf{G},\rho)$ in which the core\footnote{Abusing notation, we use the term `core' to denote both the genus one subgraph of $\mathbf{G}$ and the unique minimal genus one subcurve of the prestable curve corresponding to the core.} is not contracted, the factorization property is trivial. Therefore $\widetilde{\M}(\mathbf{G},\rho)\to \K(\mathbf{G})\subset \Mbar_{1,n}(\mathbb{P}^r,d)$ is the total space of a torus bundle over the stable map stratum $\K(\mathbf{G})$. In Section \ref{sec:strataK}, we prove that $\K(\mathbf{G})$ is irreducible when $g \leq 1$. Therefore, when the core has positive degree, the stratum $\widetilde{\M}(\mathbf{G},\rho),$ is irreducible as well. Care is needed only when the genus one core is contracted. In this situation, the radial alignment $\rho$ gives a non-zero linear dependency condition on the tangent vectors.

We begin by discussing the space of \textit{parameterized} maps from $\P^1$ to $\P^r$, as well as auxiliary spaces constructed by imposing tangent vector conditions on such maps.  


\subsection{Parameterized maps with a fixed tangent vector}

\begin{defn}
    Let $$\mathrm{Map}_d^*(\mathbb{P}^1, \mathbb{P}^r):=\{ \mathbb{P}^1\xrightarrow{f} \mathbb{P}^r \mid [1:0]\mapsto [1:1:\cdots:1], f^*\mathcal{O}_{\mathbb{P}^r}(1)\cong \mathcal{O}_{\mathbb{P}^1}(d)\}$$ be the space of pointed degree $d$ maps from $\mathbb{P}^1$ to $\mathbb{P}^r$.
\end{defn}
\begin{rem}
    While we have chosen the distinguished points on the domain and target to be $[1:0]\in \mathbb{P}^1$ and $[1:1:\cdots:1]\in \mathbb{P}^r$ respectively, the space $\mathrm{Map}_d^*(\mathbb{P}^1, \mathbb{P}^r)$ does not depend on the choices. 
    
    We also note that $\mathrm{Map}_d^*(\mathbb{P}^1, \mathbb{P}^r)$ is a closed subvariety of the space of maps $\mathrm{Map}_d(\mathbb{P}^1,\mathbb{P}^r),$ which is isomorphic to $\mathcal{M}_{0,3}(\mathbb{P}^r,d)$ by identifying the three ordered marked points parametrized by $\mathcal{M}_{0,3}(\mathbb{P}^r,d)$ with $0,1,\infty\in \mathbb{P}^1.$
\end{rem}
We start by identifying the space of pointed maps from $\mathbb{P}^1$ to $\mathbb{P}^r$  with a space of polynomials \cite[Definition 1.1]{farbwolf}. Namely, the space $\mathrm{Map}_d^*(\mathbb{P}^1, \mathbb{P}^r)$ is isomorphic to $$\{(f_0,\dots,f_r)\mid f_i\in \mathbb{C}[z]\text{ monic of degree }d, \text{no common factor among }f_i\}\subset \mathbb{A}_{\mathbb{C}}^{d(r+1)},$$which is a dense open subset in the affine space. Specifically, the polynomials specify the map on the affine chart $[z:1]\mapsto [f_0(z):\dots:f_r(z)]$. It uniquely extends to the other chart by$$[1:w]\mapsto [w^d f_0(1/w):\dots: w^d f_r(1/w)]$$ and in particular $[1:0]\mapsto [1:1:\dots:1]$.

Since the polynomials $f_i$ are monic, they are uniquely determined by their sets of roots, hence the space $\mathrm{Map}_d^*(\mathbb{P}^1, \mathbb{P}^r)$ may also be described as $$\{(R_0,\dots,R_r)\mid R_i\in \mathrm{Sym}^d(\mathbb{A}_{\mathbb{C}}^1), \bigcap_{i=0}^r R_i=\varnothing\}.$$ The complement is$$\mathsf{B}:=\mathbb{A}^{d(r+1)}_{\mathbb{C}}\smallsetminus \mathrm{Map}_d^*(\mathbb{P}^1, \mathbb{P}^r)=\{(R_i)_{i=0}^r\mid R_i\in \mathrm{Sym}^d(\mathbb{A}^1_{\mathbb{C}}), \bigcap_{i=0}^r R_i\neq \varnothing\},$$ and an element in $\bigcap_{i=0}^r R_i$ is called a `basepoint,' as these are the points on $\mathbb{A}_{\mathbb{C}}^1$ that prevent $(f_i)_{i=0}^r$ from defining a degree-$d$ map $\mathbb{P}^1\to \mathbb{P}^r$.

\begin{rem}
    The definition of $\mathrm{Map}_d^*(\mathbb{P}^1,\mathbb{P}^r)$ makes sense when $d = 0.$ In this case, $\mathrm{Map}_0^*(\mathbb{P}^1,\mathbb{P}^r)$ is a single point, which is the constant map taking $\mathbb{P}^1$ to $[1:1:\cdots:1]$ on the target. The monic polynomials are $f_0,\dots, f_r\equiv 1,$ with their sets of roots as the empty set.
\end{rem}

Let $p=[1:\dots:1]\in \mathbb{P}^r$. We now describe the locus of parameterized maps with a fixed tangent vector at the marked point $[1:0]$. \begin{lem}\label{lem:partial10}
    \label{lem}The map $\partial_{[1:0]}:\mathrm{Map}_d^*(\mathbb{P}^1,\mathbb{P}^r)\to T_p\mathbb{P}^r$ of taking derivatives at $[1:0]\in \mathbb{P}^1$ is given by \[(R_0,\dots,R_r)\mapsto \left[\left(-\sum_{y\in R_0}y,\dots,-\sum_{y\in R_r}y\right)\right]\in \mathbb{C}^{r+1}/\mathbb{C}\cdot (1,\dots,1)=T_{p}\mathbb{P}^r.\]The map is smooth onto its image and has non-empty and irreducible fibers over $\mathbf{v}\in T_{p}\mathbb{P}^r$ unless $d=1$ and $\mathbf{v}=0\in T_p\mathbb{P}^r$.
\end{lem}
\begin{proof}
We use the description of $\mathrm{Map}^*_d(\mathbb{P}^1,\mathbb{P}^r)$ as an $(r+1)$-tuple of polynomials and denote the coefficients of the polynomials as $$f_i(z) = z^d + f_{i, d-1}z^{d-1}+\dots +f_{i, 0},$$ which are extended to the chart $\{[1:w]\}$ by $$\tilde{f}_i(w) = 1 + f_{i, d-1}w + \dots + f_{i,0}w^d.$$ The derivative $$\left(\frac{\partial}{\partial w}\tilde{f}_i\right)_{w=0}=f_{i,d-1}=-\sum_{y\in R_i}y$$ leads to the expression of $\partial_{[1:0]}$ claimed above.

The fiber $\partial_{[1:0]}^{-1}(\mathbf{v})$ is the intersection of $\mathrm{Map}^*_d(\mathbb{P}^1,\mathbb{P}^r)\subset \mathbb{A}^{d(r+1)}$ and the affine\footnote{The term `affine' here is meant in the sense of linear algebra: an affine subspace is the translation of a linear subspace in an ambient vector space.} linear subspace $$\mathsf{L}_{\mathbf{v}}:=\{(f_i)_{i=0}^{r}\in \mathbb{A}^{d(r+1)}|(f_{0, d-1},\dots, f_{r,d-1})\in \mathbf{v}+\langle(1,\dots,1)\rangle\}.$$ Therefore, if $\partial_{[1:0]}^{-1}(\mathbf{v})$ is not empty, it is a dense open subset of the affine subspace $\mathsf{L}_{\mathbf{v}}$, which is smooth and irreducible.  

We now prove that the fiber is non-empty in the cases described above. The intersection is empty if and only if $\mathsf{L}_{\mathbf{v}}\subset \mathsf{B}=\mathbb{A}^{d(r+1)}\smallsetminus \mathrm{Map}^*_d(\mathbb{P}^1,\mathbb{P}^r)$. When $d=1$, the condition $\mathbf{v}\neq 0$ implies that the $r + 1$ sets $R_i$ that correspond to tuples of polynomials $\mathsf{L}_{\mathbf{v}}$ are not identical, so there is no basepoint. In other words, $\mathsf{L}_{\mathbf{v}}\cap \mathsf{B}=\varnothing$ in this case, so in particular $\mathsf{L}_{\mathbf{v}}\not\subset \mathsf{B}$. On the other hand, $d=1,\mathbf{v}= 0$ forces the sets $R_i$ correspond to elements in $\mathsf{L}_{\mathbf{0}}$ to be identical, hence $\mathsf{L}_{\mathbf{0}}\subset \mathsf{B}$.

Suppose $(R_0,\dots, R_r)$ corresponds to some $(f_0,\dots,f_r)\in \mathsf{L}_{\mathbf{v}}\cap \mathsf{B}$, so that $\bigcap_{i=1}^r R_i=\{z_1,\dots, z_k\}$, where $1\leq k\leq d$. When $d\geq 2$, so that $|R_0|\geq 2$, observe that there always exists some $R_0'$ such that $\sum_{y\in R_0}y = \sum_{y'\in R_0'}y'$ and that $R_0'\cap \{z_1,\dots, z_k\}=\varnothing$. Thus, $\mathsf{L}_{\mathbf{v}}\not\subset \mathsf{B}$ in this case.

Therefore, we have shown that when $d = 1,$ the map $\partial_{[1:0]}:\mathrm{Map}^*_1(\mathbb{P}^1,\mathbb{P}^r)\to T_p\P^r\smallsetminus 0$ is surjective with smooth fibers. Since the domain and target are smooth, it follows that the map $\partial_{[1:0]}$ is smooth when $d=1.$ When $d\geq 2,$ the same reasoning implies that the map $\partial_{[1:0]}:\mathrm{Map}^*_1(\mathbb{P}^1,\mathbb{P}^r)\to T_p\P^r$ is smooth.
\end{proof}
\begin{rem}
    The exception of $d=1$, $\mathbf{v}=0$ has a clear geometric picture: the degree one maps $\mathbb{P}^1\to \mathbb{P}^r$ are linear embeddings  $\mathbb{P}^1\subset \mathbb{P}^r$ and hence cannot have vanishing tangent vectors.
\end{rem}
Recall that $\mathsf{r} := \mathrm{rad}(\mathbf{G}, \rho)$ denotes the contraction radius.
\begin{defn}
    Let $\widetilde{\mathrm{Map}}_{\mathbf{G}}^{*, \mathsf{F}, \rho}$ be the locus in $\prod_{v\in \rho^{-1}(\mathsf{r})}\mathrm{Map}^*_{\delta(v)}(\mathbb{P}^1,\mathbb{P}^r)\times (\mathbb{C}^\star)^{|\rho^{-1}(\mathsf{r})|}/\mathbb{C}^\star$ that parametrize a collection of pointed maps together with a non-vanishing linear dependency (up to common rescaling) of the representative tangent vectors at the marked points. 
\end{defn}
\begin{lem}\label{mapcon}
     The space $\widetilde{\mathrm{Map}}_{\mathbf{G}}^{*, \mathsf{F}, \rho}$ is irreducible. Furthermore, it is empty if and only if $d_{\min}(\mathbf{G}, \rho) = 1$. 
\end{lem}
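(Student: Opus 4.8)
The plan is to analyze the product space $\prod_{v\in\rho^{-1}(\mathsf r)}\Map^*_{\delta(v)}(\mathbb P^1,\mathbb P^r)$ together with the product derivative map
\[
\partial \;:\; \prod_{v\in\rho^{-1}(\mathsf r)}\Map^*_{\delta(v)}(\mathbb P^1,\mathbb P^r) \longrightarrow \prod_{v\in\rho^{-1}(\mathsf r)} T_p\mathbb P^r,
\]
where each factor is the map $\partial_{[1:0]}$ of Lemma \ref{lem}. By definition, $\Map_{\mathbf G}^{*,\mathsf D,\rho}$ is the preimage under $\partial$ of the locus $Z \subseteq \prod_v T_p\mathbb P^r$ of tuples $(\mathbf v_v)_{v\in\rho^{-1}(\mathsf r)}$ that admit a non-vanishing linear dependency, i.e.\ there exist $a_v\in\mathbb C^*$ with $\sum_v a_v\mathbf v_v = 0$. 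I will show (i) $Z$ is connected, and (ii) $\partial$ restricted over $Z$ has connected fibers and is surjective onto a connected set, except precisely when $d_{\min}(\mathbf G,\rho)=1$, in which case $\Map_{\mathbf G}^{*,\mathsf D,\rho}=\varnothing$. Since a map with connected base, connected fibers, and which is open (each $\partial_{[1:0]}$ is, being a dominant map to affine space followed by a linear projection, whose fibers are dense opens in affine subspaces) will have connected total space, this suffices.

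For step (i): think of $Z$ as the incidence variety $\{((\mathbf v_v), (a_v)) : a_v\in\mathbb C^*,\ \sum a_v\mathbf v_v=0\}$ projected to the $\mathbf v$-coordinates; actually it is cleaner to work with the incidence variety $\widetilde Z$ directly, since the fiber of $\widetilde Z \to Z$ over a point is a torus (the space of scalings of a fixed dependency — nonempty and connected as long as $\mathsf r$ has $\geq 1$ vertex, and when $|\rho^{-1}(\mathsf r)|=1$ one checks the condition forces $\mathbf v=0$ so $Z$ is a point). The incidence variety $\widetilde Z$ fibers over the torus $(\mathbb C^*)^{\rho^{-1}(\mathsf r)}$ of choices of $(a_v)$, with fiber over $(a_v)$ the linear subspace $\{(\mathbf v_v):\sum a_v\mathbf v_v=0\}$, which is a vector space hence connected; a connected total space over a connected base with connected fibers is connected, so $\widetilde Z$ and then $Z$ are connected. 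For step (ii): by Lemma \ref{lem}, over a point $(\mathbf v_v)\in Z$ the fiber of $\partial$ is $\prod_v \partial_{[1:0]}^{-1}(\mathbf v_v)$, a product of the fibers, each of which is nonempty and connected \emph{unless} $\delta(v)=1$ and $\mathbf v_v = 0$. So the only way $\Map_{\mathbf G}^{*,\mathsf D,\rho}$ can fail to surject with connected fibers onto $Z$ is if every point of $Z$ has some coordinate $\mathbf v_v=0$ with $\delta(v)=1$ — and I must show this happens exactly when $d_{\min}(\mathbf G,\rho)=1$.

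The main obstacle — and the crux of the emptiness criterion — is exactly this last point: understanding precisely when $Z$ is forced into the "bad" locus. Recall $d_{\min}(\mathbf G,\rho)=\sum_{v\in\rho^{-1}(\mathsf r)}\delta(v)$. If $d_{\min}>1$, I claim there is a point of $Z$ with all coordinates nonzero and with every $\mathbf v_v$ nonzero (so realizable): if some $\delta(v)>1$, Lemma \ref{lem} already says $\partial_{[1:0]}$ hits every vector including suitably generic ones; if all $\delta(v)=1$ but there are $\geq 2$ of them, each factor is a linear embedding $\mathbb P^1\subset\mathbb P^r$ whose tangent vector can be any nonzero vector, so one can choose two antiparallel nonzero vectors (and zero for the rest, or all nonzero in general position after noting a dependency only needs coefficients in $\mathbb C^*$ — here one must be slightly careful and pick, say, $\mathbf v_{v_1}+\dots+\mathbf v_{v_m}=0$ with all $\mathbf v_{v_i}$ nonzero, which is possible for $m\geq 2$ in $T_p\mathbb P^r\cong\mathbb C^r$, $r\geq 1$... for $r=1,m=2$ take $\mathbf v_{v_1}=-\mathbf v_{v_2}\neq 0$, and for $m\geq 3$ it is easy). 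Conversely, if $d_{\min}=1$, then $\rho^{-1}(\mathsf r)$ consists of a single vertex $v$ with $\delta(v)=1$, so the linear dependency condition with a single vector $\mathbf v_v$ and coefficient $a_v\in\mathbb C^*$ forces $\mathbf v_v=0$; but $\delta(v)=1$ maps are embeddings with nonvanishing tangent vector, so the fiber $\partial_{[1:0]}^{-1}(0)$ is empty, giving $\Map_{\mathbf G}^{*,\mathsf D,\rho}=\varnothing$. Assembling these — connectedness of $Z$, connectedness and surjectivity of $\partial$ over the non-bad part of $Z$, and the dichotomy on $d_{\min}$ — completes the proof.
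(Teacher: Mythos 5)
Your high-level approach matches the paper's: assemble the vertex derivatives into the map $\gamma$ to $\bigoplus_{v} T_p\mathbb{P}^r$, show a dependency locus in the target is connected, and pull back via the connected fibers from Lemma \ref{lem}. There is, however, a genuine gap in \emph{which} target locus must be shown connected. By Lemma \ref{lem}, $\gamma$ does not surject onto your full dependency locus $Z$; its image is $Z$ intersected with $\mathsf{U} := \{(\mathbf{v}_v) : \mathbf{v}_v \neq 0 \text{ whenever } \delta(v) = 1\}$, since $\partial_{[1:0]}^{-1}(0) = \varnothing$ in degree one. To run the connected-base, connected-fibers argument, you need $Z \cap \mathsf{U}$ to be connected. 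You prove only that $Z$ is, and $Z \cap \mathsf{U}$ is in general a \emph{proper} open subset of $Z$: for instance with three vertices and $\delta(v_1) = 1$, the point $\mathbf{v}_1 = 0$, $\mathbf{v}_2 = -\mathbf{v}_3 \neq 0$ lies in $Z \smallsetminus \mathsf{U}$. Connectedness of an open subset is not a formal consequence of connectedness of the ambient set.

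Your sentence that ``the only way $\Map_{\mathbf{G}}^{*,\mathsf{D},\rho}$ can fail to surject with connected fibers onto $Z$ is if every point of $Z$ has some coordinate $\mathbf{v}_v = 0$ with $\delta(v)=1$'' mis-states the dichotomy: $\gamma$ already fails to surject onto $Z$ as soon as \emph{some} point of $Z$ lies outside $\mathsf{U}$, yet the total space can still be connected — what matters is whether $Z \cap \mathsf{U}$ (the actual image) is connected. The paper's proof builds in the constraint $\mathbf{v}_v \neq 0$ from the start: its set $\mathsf{D}$ is exactly $Z \cap \mathsf{U}$, shown to be a dense open in the image of the irreducible variety $\mathsf{D}' \times \mathbb{G}_m^{j-1}$, hence irreducible and connected. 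Your fibration over the coefficient torus could be repaired — intersecting the linear-subspace fibers with $\mathsf{U}$ yields affine spaces minus finitely many proper linear subspaces, which remain irreducible over $\mathbb{C}$ — but as written the argument is applied to the wrong set and leaves a gap.
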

\begin{proof}
    Label the vertices of $\rho^{-1}(\mathsf{r})$ as $v_1, \ldots v_j$ where $j = |\rho^{-1}(\mathsf{r})|$. The derivative maps $\mathrm{Map}_d^*(\mathbb{P}^1,\mathbb{P}^r)\to T_p\mathbb{P}^r$ assemble to $$ \gamma: \prod_{i = 1}^{j}\mathrm{Map}^*_{\delta(v_i)}(\mathbb{P}^1,\mathbb{P}^r) \to \bigoplus_{i = 1}^{j} T_p \mathbb{P}^r.$$ We may take a product with the torus $(\mathbb{C}^\star)^{j}/\mathbb{C}^\star$ to get $$\gamma\times \mathrm{id}: \prod_{i = 1}^{j}\mathrm{Map}^*_{\delta(v_i)}(\mathbb{P}^1,\mathbb{P}^r)\times \left((\mathbb{C}^\star)^{j}/\mathbb{C}^\star\right) \to \bigoplus_{i = 1}^{j} T_p \mathbb{P}^r\times \left((\mathbb{C}^\star)^{j}/\mathbb{C}^\star\right).$$

     
     Write $\delta_i := \delta(v_i)$. The space of tangent vectors with linear dependencies that will appear in $\widetilde{\mathrm{Map}}_{\mathbf{G}}^{*, \mathsf{F}, \rho}$ is the subspace of $\bigoplus_{i = 1}^{j} T_p \mathbb{P}^r\times \left((\mathbb{C}^\star)^{j}/\mathbb{C}^\star\right)$ given by $${\mathsf{D}}_{\boldsymbol{\delta}}:=\{(\mathbf{v}_1,\dots, \mathbf{v}_j, [\alpha_1,\dots,\alpha_j])\mid \sum_{i=1}^j \alpha_i \mathbf{v}_i = 0, \mathbf{v}_i\neq 0\text{ when } \delta_i = 1, \mathbf{v}_i  = 0 \text{ when }\delta_i = 0\},$$ which is open in $$\mathsf{D}_{\boldsymbol{\delta}}':=\{(\mathbf{v}_1,\dots, \mathbf{v}_j, [\alpha_1,\dots,\alpha_j])\mid \sum_{i=1}^j \alpha_i \mathbf{v}_i = 0, \mathbf{v}_i  = 0 \text{ when }\delta_i = 0\}.$$

     We claim that $\mathsf{D}_{\boldsymbol{\delta}}'$ is irreducible. First, in the case where all entries in the degree vector $\boldsymbol{\delta}$ are zero, we have \[\mathsf{D}_{\boldsymbol{\delta}}' = \{(0,\dots, 0, [\alpha_1,\dots,\alpha_j])\}\cong (\C^\star)^j/\C^\star,\] which is irreducible. Otherwise, after reordering the indices, we may assume that $\delta_j\neq 0.$ There is an isomorphism \[\phi:\left(\bigoplus_{\substack{i\neq j \\ \delta_i\neq 0}}T_p\P^r\right)\times (\mathbb{C}^\star)^{j-1}\to \mathsf{D}_{\boldsymbol{\delta}}'\] given by $$((\mathbf{v}_i)_{{\substack{i\neq j \\ \delta_i\neq 0}}},(\alpha_i)_{i=1}^{j-1})\mapsto \left( \left((\mathbf{v}_i)_{{\substack{i: i\neq j \\ \delta_i\neq 0}}},(0)_{i':\delta_{i'}=0},-\sum_{{\substack{i: i\neq j \\ \delta_i\neq 0}}}\alpha_i \mathbf{v}_i\right), [\alpha_1,\dots, \alpha_{j-1},1]\right).$$

    
    Consider the open subset \[U_{\boldsymbol{\delta}}\subset \left(\bigoplus_{\substack{i\neq j \\ \delta_i\neq 0}}T_p\P^r\right)\times (\mathbb{C}^\star)^{j-1}\] given by the condition that $\mathbf{v}_i\neq 0$ whenever $\delta_i = 1.$ Since $U_{\boldsymbol{\delta}}$ is open in an irreducible space, we know that $U_{\boldsymbol{\delta}}$ is irreducible. Hence, $\mathsf{D}_{\boldsymbol{\delta}}',$ which is isomorphic to the image $\phi(U_{\boldsymbol{\delta}}),$ is irreducible. Further, $\mathsf{D}_{\boldsymbol{\delta}}$ is non-empty if and only if $U_{\boldsymbol{\delta}}$ is, which we can check is equivalent to the condition that $d_{\min}(\mathbf{G}, \rho) = 1.$

    The space of interest $\widetilde{\mathrm{Map}}_{\mathbf{G}}^{*, \mathsf{F}, \rho}$ is the preimage of $\mathsf{D}_{\boldsymbol{\delta}}$ under the map $\gamma\times \mathrm{id}$. Therefore, $\widetilde{\mathrm{Map}}_{\mathbf{G}}^{*, \mathsf{F}, \rho}$ is empty when $d_{\mathrm{min}}(\mathbf{G},\rho) = 1$ because $\mathsf{D}_{\boldsymbol{\delta}} = \varnothing.$
    
    We turn to the case of $d_{\mathrm{min}}(\mathbf{G},\rho)>1$ so that $\mathsf{D}_{\boldsymbol{\delta}}\neq \varnothing.$ Given a tuple $((\mathbf{v}_i)_{i=1}^j, [\alpha_1:\dots: \alpha_j])\in \mathsf{D}_{\boldsymbol{\delta}},$ the preimage $(\gamma \times \mathrm{id})^{-1}((\mathbf{v}_i)_{i=1}^j, [\alpha_1:\dots: \alpha_j])$ is canonically identified with $$\prod_{i=1}^j \partial_{[1:0]}^{-1}(\mathbf{v}_i)\subset \prod_{i=1}^j \mathrm{Map}^*_{\delta(v_i)}(\mathbb{P}^1,\mathbb{P}^r).$$ Lemma \ref{lem:partial10} gives that $\partial_{[1:0]}$ is smooth onto its image, and each $\partial_{[1:0]}^{-1}(\mathbf{v}_i)$ is non-empty and irreducible. The same then holds for the morphism $\gamma \times \mathrm{id} = (\prod_{i=1}^j \partial_{[1:0]}) \times \mathrm{id}$ and its fibers. Therefore, $\widetilde{\mathrm{Map}}_{\mathbf{G}}^{*, \mathsf{F}, \rho}$ is irreducible.
\end{proof}


We record another irreducibility result that will be relevant to the discussion of stable maps strata.
\begin{rem}\label{rem:two-pointed-maps}
    Let $q\in \mathbb{P}^r,$ and consider $\mathrm{Map}_d^{*,q}(\P^1, \P^r)$ be the space of degree $d$ parametrized maps $f: \P^1\to \P^r$ such that $f([1:0]) = [1:1:\cdots:1],$ and $f([0:1]) = q.$ This is the intersection of the affine linear subspace $$\{(f_0,\dots,f_r)\mid f_i\in\C[z]\text{ monic of degree }d\text{ and } (f_0(0),\dots,f_r(0))\in q\subset \mathbb{C}^{r+1}\}\subset \mathbb{A}_\mathbb{C}^{d(r+1)}$$ and the dense open $\mathrm{Map}_d^{*}(\P^1, \P^r)\subset \mathbb{A}_\mathbb{C}^{d(r+1)}$ and is thus irreducible. It is empty only when $d = 1$ and $q = [1:1:\cdots:1].$
\end{rem}
\subsection{Strata of the Kontsevich moduli space}\label{sec:strataK}
Recall that each $(g, n, d)$-graph $\mathbf{G}$ defines a locally closed stratum $\K(\mathbf{G})$ in the Kontsevich moduli space $\Mbar_{g, n}(\mathbb{P}^r, d)$. In this section, we will show that each $\K(\mathbf{G})$ is irreducible when $g \leq 1$. 



\begin{prop}\label{prop:conn_genus_zero}
Suppose $d > 0$. For any $(0, n, d)$-graph $\T$, the stratum $\K(\T)$ is irreducible.
\end{prop}
Our proof of Proposition \ref{prop:conn_genus_zero} will require the following lemma.
\begin{lem}[Proposition 4.1 of \cite{farbwolf}]
Suppose $n, d  > 0$, and let
\[ \mathrm{ev}_1 : \M_{0, n+1}(\P^r, d) \to \P^r\]
denote evaluation at the first marked point. Then the fiber
\[ \M_{0, n}^*(\P^r, d) \]
of $\mathrm{ev}_1$ over any point in $\P^r$ is irreducible. 
 \end{lem}
 \begin{proof}
We have an isomorphism $$\mathcal{M}_{0,n}^*(\P^r,d)\cong \left(\mathrm{Map}^*_{d}(\P^1,\P^r)\times \mathrm{Conf}^{n}(\mathbb{P}^1\smallsetminus [1:0])\right)/\mathrm{Aut}(\mathbb{P}^1, [1:0]),$$ where $\mathrm{Aut}(\mathbb{P}^1, [1:0])\subset \mathrm{PGL}_2$ is the subgroup of Moebius transformations that fixes the point $[1:0]\in \P^1.$ The irreducibility of $\M_{0, n}^*(\P^r, d)$ now follows from the fact that both $\mathrm{Map}^*_{d}(\P^1,\P^r)$ and $\mathrm{Conf}^{n}(\mathbb{P}^1\smallsetminus [1:0])$ are irreducible.
 \end{proof}

\begin{proof}[Proof of Proposition \ref{prop:conn_genus_zero}]
First, let $\K^{\mathrm{iso}}(\T)$ be the stack of stable maps with dual graph isomorphic to $\T$, together with a fixed isomorphism between the dual graph of the map and the graph $\T$. Then we have
\[\K(\T) \cong \K^{\mathrm{iso}}(\T) /\mathrm{Aut}(\T), \]
so it suffices to show that $\K^{\mathrm{iso}}(\T)$ is irreducible.

We proceed by induction on the number of vertices of $\T$. If $|V(\T)| = 1$, the desired statement is the well-known fact that $\M_{0, n}(\P^r, d)$ is irreducible. If $V(\T) > 1$, then we can find a leaf vertex (vertex of valence one) $v \in V(\T)$. Let $\T\smallsetminus v$ denote the $(0, n', d')$-tree obtained by deleting the vertex $v$ from $V(\T)$, and replacing the unique edge containing $v$ with an additional marking; here $n' \leq n+1$ and $d' \leq d$. Restriction of the map determines a surjective morphism
\[ \rho: \K^{\mathrm{iso}}(\T) \to \K^{\mathrm{iso}}(\T \smallsetminus v). \]
We claim that $\rho$ is smooth: indeed, if we let $n_v := |m^{-1}(v)|$ be the number of markings supported by $v$, and $d_v := \delta(v)$ be the map degree of the vertex $v$, we have a pullback diagram
\[\begin{tikzcd}
   &\K^{\mathrm{iso}}(\T) \arrow[r] \arrow[d, "\rho"] & \M_{0, n_v + 1}(\P^r, d_v) \arrow[d, "\mathrm{ev}"] \\\
   &\K^{\mathrm{iso}}(\T \smallsetminus v) \arrow[r, "\mathrm{ev}"] &\P^r
\end{tikzcd}.
\]
The bottom arrow and the rightmost arrow are evaluation maps at the two marked points which are glued to make $\T$. Since the rightmost evaluation at a single marked point is smooth, the map $\rho$ is smooth as well. By induction, the stratum $\K^{\mathrm{iso}}(\T\smallsetminus v)$ is irreducible. The fibers of $\rho$ are all isomorphic to $\M_{0, n_v}^*(\P^r,  d_v)$, which is irreducible. Since $\rho$ is a smooth morphism to an irreducible variety with irreducible fibers, we may conclude that $\K^{\mathrm{iso}}(\T)$ is irreducible.
\end{proof}

We will also need the irreducibility of the strata corresponding to $(1, n, d)$ graphs in $\Mbar_{1, n}(\P^r, d)$.
\begin{prop}\label{prop:conn_genus_one}
    Suppose $d > 0$. For any $(1, n, d)$-graph $\mathbf{G}$, the stratum $\K(\mathbf{G}) \subset \Mbar_{1, n}(\P^r, d)$ is irreducible.
\end{prop}
\begin{proof}
We will prove the connectedness of the moduli stack $\K^{\mathrm{iso}}(\mathbf{G})$ parameterizing maps in $\K(\mathbf{G})$ together with a chosen isomorphism of the dual graph with $\mathbf{G}$.

Using the same idea as in the proof of Proposition \ref{prop:conn_genus_zero}, we can reduce to the case where $\mathbf{G}$ is equal to its own core, so $\mathbf{G}$ is either a single vertex of genus one, or a cycle of genus zero vertices. When $\mathbf{G}$ is a single vertex, the result is the well-known fact $\M_{1, n}(\P^r, d)$ is irreducible.

Now suppose $\mathbf{G}$ is a cycle of genus zero vertices. Suppose we can find a vertex $v \in V(\mathbf{G})$ with map degree $d_v := \delta(v) \geq 2$. Setting $n_v := |m^{-1}(v)|$, we let $\mathbf{G}\smallsetminus v$ be the $(0, n - n_v +2, d - d_v)$-graph obtained by deleting $v$ from $\mathbf{G}$, and replacing the two incident edges by markings $*_1$ and $*_2$. Restriction defines a surjective morphism
\[\rho : \K^{\mathrm{iso}}(\mathbf{G}) \to \K^{\mathrm{iso}}(\mathbf{G} \smallsetminus v), \]
which fits into a fiber square
\begin{equation}\label{eqn:pullback_genus_one}
\begin{tikzcd}
&\K^{\mathrm{iso}}(\mathbf{G}) \arrow[d, "\rho"] \arrow[r] &\M_{0, n_v + 2}(\P^r, d_v) \arrow[d, "\mathrm{ev}_{*_1} \times \mathrm{ev}_{*_2}"] \\
&\K^{\mathrm{iso}}(\mathbf{G}\smallsetminus v) \arrow[r] &\P^r \times \P^r
\end{tikzcd}
\end{equation}
The morphism $\mathrm{ev_{*_1}} \times \mathrm{ev}_{*_2}$ is smooth. Therefore the restriction map $\rho$ is smooth. The fibers of $\rho$ are irreducible by Remark \ref{rem:two-pointed-maps}. Since $\K^{\mathrm{iso}}(\mathbf{G} \smallsetminus v)$ is irreducible, we can now conclude that $\K^{\mathrm{iso}}(\mathbf{G})$ is irreducible. 

The final case is when $\mathbf{G}$ does not contain any vertex $v$ with $d_v \geq 2$. Then we may suppose there is a vertex $v$ with $d_v = 1$ (if all vertices have degree $0$, then the claim reduces to the well-known irreducibility of dual graph strata in $\Mbar_{1, n}$). Now using this vertex $v$, we can replace $\P^r \times \P^r$ in (\ref{eqn:pullback_genus_one}) with the complement of the diagonal $(\P^r \times \P^r)\smallsetminus \Delta$, and we can replace $\K^{\mathrm{iso}}(\mathbf{G} \smallsetminus v)$ with the dense open where the two evaluations at the extreme markings do not coincide. Then the same argument goes through to prove that $\K^{\mathrm{iso}}(\mathbf{G})$ is indeed irreducible.
\end{proof}


\subsection{Strata as fiber products}
We now adapt our arguments to the setting of radially-aligned maps in genus one. The following result is the first part of Theorem \ref{thm:determination}.

\begin{thm}\label{conprop}
    The stratum $\widetilde{\mathcal{M}}(\mathbf{G},\rho)$ is irreducible. It is empty if and only if $d_{\min}(\mathbf{G}, \rho) = 1$.
\end{thm}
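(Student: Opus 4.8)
The strategy is to reduce the statement about the stratum $\widetilde{\M}(\mathbf{G},\rho)$ inside $\widetilde{\M}_{1,n}(\mathbb{P}^r,d)$ to the connectedness statement for parameterized maps proved in Lemma \ref{mapcon}, by realizing the stratum as a fiber product of familiar pieces. First I would dispose of the case $\mathrm{rad}(\mathbf{G},\rho)=0$: here the factorization property is vacuous, so as explained in the preamble to this section, $\widetilde{\M}(\mathbf{G},\rho)\to \M(\mathbf{G})$ is a torus bundle over the stable map stratum, and $\M(\mathbf{G})$ is connected by its standard description as a fiber product of connected spaces $\M_{g',n'}(\mathbb{P}^r,d')$ along evaluation maps. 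So assume $\mathsf{r}:=\mathrm{rad}(\mathbf{G},\rho)>0$.

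In this case, by Lemma \ref{lem-surj-dep}, the map $\widetilde{\M}(\mathbf{G},\rho)\to \M(\mathbf{G})$ has image $\M^{\mathsf{D},\rho}(\mathbf{G})$ with connected fibers, so it suffices to prove $\M^{\mathsf{D},\rho}(\mathbf{G})$ is connected (and to pin down exactly when it is empty). The plan is to decompose the stable map stratum $\M(\mathbf{G})$: the curve $C$ splits along the nodes $\nu_v$, $v\in\rho^{-1}(\mathsf{r})$ and along all other nodes, into the ``core-side'' part (the maximal connected subcurve containing the contracted core, together with everything mapping into it before radius $\mathsf{r}$) and the components $C_v$ for $v\in\rho^{-1}(\mathsf{r})$ with their outgoing trees. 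Since the core has degree zero it is contracted to a point $p=\mathrm{con}(f)\in\mathbb{P}^r$; using the transitive $\mathrm{PGL}_{r+1}$-action as in the proof of Lemma \ref{lem-surj-dep}, I may fix $p$ and work with pointed mapping spaces. Then $\M^{\mathsf{D},\rho}(\mathbf{G})$ becomes a fiber product over $\mathbb{P}^r$-evaluation maps of: (i) the ``core-side'' pointed stratum, which is a fiber product of connected spaces $\M_{g',n'}^{*}(\mathbb{P}^r,d')$ and hence connected; (ii) for each $v\in\rho^{-1}(\mathsf{r})$, a pointed mapping space built out of $\M_{0,n_v}^{*}(\mathbb{P}^r,\delta(v))$ glued to lower trees — these are again connected fiber products; all subject to the single global condition that the collection of representative tangent vectors $\{\partial f: T_{\nu_v}C_v\to T_p\mathbb{P}^r\}_{v\in\rho^{-1}(\mathsf{r})}$ has a non-vanishing linear dependency. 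The point is that this tangent-vector condition only sees the germ of the map $C_v\to\mathbb{P}^r$ at $\nu_v$, i.e.\ it factors through the product of derivative maps $\M^*_{\delta(v)}(\mathbb{P}^1,\mathbb{P}^r)\to T_p\mathbb{P}^r$ appearing in Lemma \ref{mapcon}. Concretely, I would write $\M^{\mathsf{D},\rho}(\mathbf{G})$ as the total space of a fibration over $\mathrm{Map}_{\mathbf{G}}^{*,\mathsf{D},\rho}$ (or rather its image in $\bigoplus_v T_p\mathbb{P}^r$, the locus $\mathsf{D}$ of Lemma \ref{mapcon}) whose fibers are, away from the tangency constraint, fiber products of connected spaces; and the constraint is already incorporated into the connected base by Lemma \ref{mapcon}. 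Connectedness of the total space then follows from connectedness of base and fibers. For emptiness: $\M^{\mathsf{D},\rho}(\mathbf{G})$ is nonempty iff $\mathsf{D}$ is nonempty (all the auxiliary mapping spaces are nonempty and the evaluation maps to $\mathbb{P}^r$ are surjective), and by the analysis in Lemma \ref{mapcon} this fails precisely when $d_{\min}(\mathbf{G},\rho)=1$, i.e.\ when the unique radius-$\mathsf{r}$ vertex of positive degree has $\delta=1$, since then the representative tangent vector there is forced to be nonzero and no single nonzero vector admits a non-vanishing linear self-dependency.

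The main obstacle I anticipate is the bookkeeping in passing from the parameterized picture (Lemma \ref{mapcon}, which concerns a product of $\mathrm{Map}^*_{\delta(v_i)}(\mathbb{P}^1,\mathbb{P}^r)$'s with a tangency condition at a single marked point) to the genuine moduli stratum $\M^{\mathsf{D},\rho}(\mathbf{G})$, which involves: marked points and additional nodal attachments on the curves $C_v$, possibly nontrivial trees hanging off at radius $>\mathsf{r}$, and the automorphisms of the stable map. One must check that rigidifying at $\nu_v$ (choosing the parameterization) replaces each relevant factor by a $\mathrm{PGL}_2$-bundle or a configuration-of-points bundle in a way that preserves connectedness and does not affect the tangent-vector condition, and that quotienting by automorphisms of $\mathbf{G}$ does not disconnect anything (a continuous surjective image of a connected space is connected, so automorphism quotients are harmless). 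I would handle this by an explicit induction on the radius, peeling off one ``layer'' $\rho^{-1}(\mathsf{r})$ at a time and citing the fiber-product description of Kontsevich strata together with connectedness of $\M_{0,n}(\mathbb{P}^r,d)$ and $\M_{1,n}(\mathbb{P}^r,0)=\M_{1,n}\times\mathbb{P}^r$; the genus-one input enters only through the connected core contracted to a point, which has already been rigidified away. Once the stratum is exhibited as an iterated fiber product of connected spaces over $\mathbb{P}^r$ with the single constraint absorbed into the connected locus $\mathsf{D}$, both the connectedness and the emptiness criterion fall out.
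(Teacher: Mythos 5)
Your proof plan follows essentially the same route as the paper: dispose of the $\mathrm{rad}(\mathbf{G},\rho)=0$ case via the torus-bundle description, reduce the remaining case via Lemma~\ref{lem-surj-dep} to connectivity of $\mathcal{M}^{\mathsf{D},\rho}(\mathbf{G})$, rigidify the core-contraction point using the transitive $\mathrm{PGL}_{r+1}$-action, and express the result as a (quotient of a) fiber product of connected pointed mapping strata, with the dependency constraint absorbed into the connected space $\mathrm{Map}_{\mathbf{G}}^{*,\mathsf{D},\rho}$ via Lemma~\ref{mapcon}. Your explicit inclusion of the ``core-side'' pointed stratum in the fiber product is a useful clarification, since the paper's proof passes somewhat abruptly from connectedness of $\mathcal{M}_{\mathrm{tree}}^{*,\mathsf{D},\rho}(\mathbf{G})$ (which only concerns the trees at radius $\geq \mathsf{r}$) to the assertion that $\mathcal{M}^{\mathsf{D},\rho}(\mathbf{G})$ itself is connected.
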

As outlined at the beginning of the section, the factorization property is automatically satisfied when the genus one core is not contracted. Hence $\widetilde{\M}(\mathbf{G},\rho)\to \K(\mathbf{G})$ is a torus fiber bundle. The stable map stratum $\K(\mathbf{G})$ is irreducible by Proposition \ref{prop:conn_genus_one}.

Now we assume that the genus one core is contracted. We recall that $\mathsf{r}$ denotes the contraction radius.
\begin{defn}For each $v\in \rho^{-1}(\mathsf{r})$, let $G_v$ be the tree that is the minimal subgraph consisting of all vertices $w$ such that $\rho(w)>\mathsf{r}$ and are connected to $v$ by a path, such that each vertex $u$ on the path has $\rho(u)>\mathsf{r}.$

We restrict the degree $\delta$ and marking function on $\mathbf{G}$ to $G_v$ and denote the marking function as $m_{G_v}: J\to V(G_v)$ for some $J\subset \{1,\dots, n\}$. Now we attach an additional leg $\star_v$ along $v$ by defining $m'_{G_v}: J\sqcup\{\star_v\}\to V(G_v)$ that extends $m_{G_v}$ with $\star_v\mapsto v$. The tuple $(G_v, \delta, m'_{G_v})$ defines a stable map dual graph that we denote $\mathbf{G}_v$. The additional marked point $\star_v$ is distinguished and may be indicated by writing $\mathbf{G}_v$ as a pair $(\mathbf{G}_v, \star_v)$. 


Let $\mathcal{K}(\mathbf{G}_v, \star_v)$ be the corresponding (genus-zero) stable map stratum. The marking $\star_v$ gives an evaluation map $\mathrm{ev}_{\star_v}: \mathcal{K}(\mathbf{G}_v, \star_v)\to \mathbb{P}^r$. Let $\mathcal{K}^*(\mathbf{G}_v, \star_v)$ be a fiber $\mathrm{ev}_{\star_v}^{-1}(p)$ for some fixed $p\in \mathbb{P}^r$.

\end{defn}

\begin{rem}Geometrically, consider the union of components $C_v$ corresponding to $v$ and those further from the core than $C_v$. They form a subcurve that is connected to the core by the node $\mu_{v}.$ The pair $(\mathbf{G}_v, \star_v)$ is the dual graph of the subcurve together with an extra marked point corresponding to $\mu_{v}.$ 

The following is an alternative way to describe $\mathbf{G}_v$. Consider $\mathbf{G}_{\geq \mathsf{r}}$ as the subgraph obtained from $\mathbf{G}$ by deleting all $\{v\in V(\mathbf{G}): \rho(v)<\mathsf{r}\}$ and their incident edges without modifying the degree and marked points on the remaining vertices. The decorated tree $\mathbf{G}_v$ is the connected component of $\mathbf{G}_{\geq \mathsf{r}}$ that contains $v$ together with an additional marked point $\star_v$.
\end{rem}

To return to the stratum $\widetilde{\M}(\mathbf{G},\rho),$ it is convenient to parametrize the linear dependency realized by the representative tangent vectors in the following way.

\begin{defn}
    Let
    \[\mathcal{V}_{\mathrm{tree}}^{*, \mathsf{F},\rho}(\mathbf{G}) \to \prod_{v\in \rho^{-1}(\mathsf{r})} \mathcal{K}^*(\mathbf{G}_v, \star_v)\] be the vector bundle whose fiber $V_{\boldsymbol{f}}$ over a tuple of maps $\boldsymbol{f} = (f_v)_{v \in \rho^{-1}(\mathsf{r})}$ is $\bigoplus_{v\in \rho^{-1}(\mathsf{r})}T_{\mu_v}C_v$.
    Let
\[\mathbb{P}^{\circ}\mathcal{V}_{\mathrm{tree}}^{*, \mathsf{F},\rho}(\mathbf{G}) \to \prod_{v\in \rho^{-1}(\mathsf{r})} \mathcal{K}^*(\mathbf{G}_v, \star_v)\] be the torus bundle whose fiber over $\boldsymbol{f}$ is the space of lines in $V_{\boldsymbol{f}}$ which are not contained in any coordinate subspace. Define $\widetilde{\mathcal{K}}^{*, \mathsf{F},\rho}_{\mathrm{tree}}(\mathbf{G})$ as the closed substack of $\mathbb{P}^{\circ}\mathcal{V}_{\mathrm{tree}}^{*, \mathsf{F},\rho}(\mathbf{G})$ consisting of pairs $(f,L)$ such that $\partial f(L) = 0.$
\end{defn}
\begin{lem}
    The space $\widetilde{\mathcal{K}}_{\mathrm{tree}}^{*, \mathsf{F}, \rho}(\mathbf{G})$ is irreducible and is empty if and only if $d_{\mathrm{min}}(\mathbf{G})=1.$
\end{lem}
\begin{proof}
    We again consider the moduli space $\widetilde{\mathcal{K}}_{\mathrm{tree}}^{*, \mathsf{F}, \rho}(\mathbf{G})_{\mathrm{iso}}$ parametrizing a point in $\widetilde{\mathcal{K}}^{*, \mathsf{F},\rho}_{\mathrm{tree}}(\mathbf{G})$ together with a chosen isomorphism between its dual graph and $\bigsqcup_{v\in \rho^{-1}(\mathsf{r})}(\mathbf{G}_v, \star_v)$. By the same argument as in the proof of Proposition \ref{prop:conn_genus_zero}, we may run induction on the number of vertices that are not in $\rho^{-1}(\mathsf{r})$ and reduce to checking irreducibility in the case where all vertices are on $\rho^{-1}(\mathsf{r}).$ Then, $\widetilde{\mathcal{K}}_{\mathrm{tree}}^{*, \mathsf{F}, \rho}(\mathbf{G})_{\mathrm{iso}}$ is isomorphic to $$\widetilde{\mathcal{K}}^{*, \mathsf{F},\rho}_{\mathrm{tree}}(\mathbf{G})_{\mathrm{iso}}\cong \left(\prod_{v\in \rho^{-1}(\mathsf{r})} \mathrm{Conf}^{\mathrm{val}(v)-1}(\mathbb{P}^1)\times \widetilde{\mathrm{Map}}^{*,\mathsf{F}, \rho}_{\mathbf{G}}(\mathbb{P}^1,\mathbb{P}^r)\right)/\prod_{v\in \rho^{-1}(\mathsf{r})} \mathrm{Aut}(C_v,*),$$
    where $\mathrm{Aut}(C_v,*)$ denotes the pointed automorphism group on each component ${C_v}$ for ${v\in \rho^{-1}(\mathsf{r})}$. Since we know that $\widetilde{\mathrm{Map}}^{*,\mathsf{F}, \rho}_{\mathbf{G}}$ is irreducible from Lemma \ref{mapcon}, and $\mathrm{Conf}^{\mathrm{val}(v)-1}(\mathbb{P}^1)$ are all irreducible, the quotient of their product is irreducible as well. The criterion for non-emptiness comes from that of $\widetilde{\mathrm{Map}}^{*,\mathsf{F}, \rho}_{\mathbf{G}}$. 
\end{proof}

We are in a position to complete the proof of Theorem \ref{conprop}. We first set up some notation.

\begin{defn}
    Let $\widetilde{\mathcal{M}}^{\mathrm{iso}}(\mathbf{G},\rho)$ be the moduli space parametrizing a point in $\widetilde{\mathcal{M}}(\mathbf{G},\rho)$ and an isomorphism between its dual graph and $(\mathbf{G},\rho).$ We have $\widetilde{\mathcal{M}}^{\mathrm{iso}}(\mathbf{G},\rho)/\mathrm{Aut}(\mathbf{G},\rho) = \widetilde{\mathcal{M}}(\mathbf{G},\rho).$
\end{defn}

\begin{defn}
    Let $\ell\in \{1,\dots, k\}$ be a level for the pair $(\mathbf{G},\rho).$ For $v\in \rho^{-1}(\ell),$ let $\star_v$ be the unique half-edge incident to $v$ that connects $v$ to $\rho^{-1}(\ell-1)$ and let $\mu_v\in C_v$ be the corresponding marking. Let $$\widetilde{\M}_{\rho^{-1}(\ell)}\to \prod_{v\in \rho^{-1}(\ell)}\mathcal{M}_{0, \mathrm{val}(v)}(\mathbb{P}^r,d_v)$$ be the total space of the $(\mathbb{C}^\star)^{|\rho^{-1}(v)|}/\mathbb{C}^\star$-bundle that parametrizes a tuple of genus zero maps together with a general line in the direct sum of the tangent spaces $\bigoplus_{v\in \rho^{-1}(\ell)}T_{\mu_v}C_{v}$ that is not contained in any coordinate hyperplane.

    The evaluation maps along $\mu_v$ define $\widetilde{\M}_{\rho^{-1}(\ell)}\to \prod_{v\in \rho^{-1}(\ell)}\mathbb{P}^r.$ This is a Zariski locally trivial fibration and we denote its fiber by $\widetilde{\M}^*_{\rho^{-1}(\ell)}$.
\end{defn}

\begin{proof}[Proof of Theorem \ref{conprop}]
    
    We prove the irreducibility of $\widetilde{\mathcal{M}}^{\mathrm{iso}}(\mathbf{G},\rho),$ which implies the irreducibility of $\widetilde{\M}(\mathbf{G},\rho).$

    When the core has positive degree, the factorization property is vacuous, and $\widetilde{\M}(\mathbf{G},\rho)$ is the total space of a torus bundle over $\mathcal{K}(\mathbf{G}).$ Its irreducibility follows from that of $\mathcal{K}(\mathbf{G}),$ which has been proven in Proposition \ref{prop:conn_genus_one}.

    Suppose the core has degree zero. We run induction on the number of levels outside of the contraction radius. When the whole curve is contracted (namely no contraction radius is present, and the morphism has degree zero), the stratum $\widetilde{\M}(\mathbf{G},\rho)$ is the product of $\mathbb{P}^r$ and some torus bundle over a dual graph stratum in $\Mbar_{1,n'},$ which is irreducible. Now suppose that the degree is positive, the only non-contracted components are on the contraction radius, and that there are no components outside the contraction radius. Let $(\mathbf{G}_c, \rho_c)$ be the radially-aligned graph obtained by deleting from $(\mathbf{G}, \rho)$ the vertices at the contraction radius, and replacing the incident edges with markings. The forgetful map $\widetilde{\M}^{\mathrm{iso}}(\mathbf{G},\rho)\to \widetilde{\M}^{\mathrm{iso}}(\mathbf{G}_c, \rho_c)$ is a morphism between smooth varieties, with fibers $\mathcal{K}_{\mathrm{tree}}^{*, \mathsf{F}, \rho}(\mathbf{G}).$ Since these fibers have constant dimension, we can see that the morphism is flat. Since the base and fiber are both irreducible, $\widetilde{\M}^{\mathrm{iso}}(\mathbf{G},\rho)$ is irreducible as well.

    For the inductive step, one applies a similar argument to the map that forgets the outermost level $\ell$ of $(\mathbf{G}, \rho)$, replacing the fiber $\mathcal{K}_{\mathrm{tree}}^{*, \mathsf{F}, \rho}(\mathbf{G})$ with $\widetilde{\M}^*_{\rho^{-1}(\ell)}$.
\end{proof}
\section{Topology of dual complexes}\label{sec:dualcomplex}

In this section, we will assemble the preceding results to describe the dual complex $\Delta_{0, n}(d)$ of the boundary divisor in $\Mbar_{0, n}(\P^r, d)$ and the dual complex $\Delta_{1, n}(d)$ of the boundary divisor in $\widetilde{\M}_{1, n}(\mathbb{P}^r, d)$. Then we will prove the second main theorem of this paper, recalled below.

\begin{customthm}{B}
    Fix $r\geq 1$. The dual complex $\Delta_{0, n}(d)$ of \[\Mbar_{0, n}(\mathbb{P}^r, d) \smallsetminus \M_{0, n}(\mathbb{P}^r, d) \] for $d>0$ as well as the dual complex $\Delta_{1, n}(d)$ of 
    \[\widetilde{\M}_{1, n}(\mathbb{P}^r, d) \smallsetminus \M_{1, n}(\mathbb{P}^r, d)   \] for $d>1$ are contractible. In particular, the reduced homology groups of the dual complexes vanish.
\end{customthm}

Our proof of this theorem proceeds as follows: for each $g \geq 0$, we define a symmetric $\Delta$-complex $\Delta_{g, n}^{\mathrm{vir}}(d)$ which tracks the poset structure of the locus of maps from singular curves in the Kontsevich moduli space $\Mbar_{g, n}(\mathbb{P}^r, d)$. We then describe a deformation retract of $\Delta_{g, n}^{\mathrm{vir}}(d)$ to a point for all $d > 0$. When $g = 0$, we have that $\Delta_{0, n}(d) = \Delta_{0, n}^{\mathrm{vir}}(d)$, so the deformation retraction we construct proves the first half of Theorem \ref{thm:mainthm}. When $g = 1$, we show that the dual complex $\Delta_{1, n}(d)$ is homeomorphic to a subspace of $\Delta_{1, n}^{\mathrm{vir}}(d)$, and that this subspace is preserved by our deformation retract.

\subsection{The virtual dual complex $\Delta_{g, n}^{\mathrm{vir}}(d)$}\label{sec:deltavir}

Associated to a stable $(g, n, d)$-graph $\mathbf{G}$ is a cell
\[ \kappa_{\mathbf{G}} := \left\{\ell : E(\mathbf{G}) \to \mathbb{R}_{\geq 0} \mid \sum_{e \in E(\mathbf{G})} \ell(e) = 1\right\}.  \]
A morphism $\mathbf{G} \to \mathbf{G}'$ in the category $\Gamma_{g, n}(d)$ induces an inclusion of cells
\[\kappa_{\mathbf{G}'} \to \kappa_{\mathbf{G}}, \]
so that we get a functor
\[\Gamma_{g, n}(d)^{\mathrm{op}} \to \mathsf{Spaces} \]
to the category $\mathsf{Spaces}$ of topological spaces.
\begin{defn}\label{def:vir_dual_complex}
    The \textit{virtual dual complex} of the stable maps compactification $\mathcal{M}_{g,n}(\mathbb{P}^r,d)\subset \Mbar_{g,n}(\mathbb{P}^r,d)$ is the colimit of the above functor.
\end{defn}
Definition \ref{def:vir_dual_complex} is well-suited for studying the topology of $\Delta_{g, n}^{\mathrm{vir}}(d)$. To compare it with actual dual complexes of normal crossings compactifications, it is worthwhile to describe $\Delta_{g, n}^{\mathrm{vir}}(d)$ as a \textit{symmetric $\Delta$-complex} following \cite[\S 3.5]{cgp}. Recall that a symmetric $\Delta$-complex $X$ is a functor
\[X: I^{\mathrm{op}} \to \mathsf{Set} \]
where 
\begin{itemize}
\item $I$ is the category consisting of sets of the form $[p] := \{0, \ldots, p\}$ for $p \geq 0$ and the set $[-1]:= \varnothing$, and whose morphisms are injections;
\item $\mathsf{Set}$ is the category of sets.
\end{itemize}
There is a \textit{geometric realization functor} from symmetric $\Delta$-complexes to topological spaces; c.f. \cite[(3.1.1)]{cgp}). Strictly speaking, Definition \ref{def:vir_dual_complex} defines the geometric realization of $\Delta_{g, n}^{\mathrm{vir}}(d)$; we will use the same notation for the functor and the geometric realization, since ultimately we are interested in the topology of the geometric realization. For the functorial description, first set, for $p \geq -1$,
\[ \Gamma_{g, n}^p(d) \subset \Gamma_{g, n}(d) \]
for the subgroupoid consisting of graphs with $p+1$ edges. Write $\mathrm{Iso}(\Gamma_{g, n}^p(d))$ for the finite set of isomorphism classes in this groupoid. Now set
\begin{equation}\label{eqn:sym_delta_vir}
\Delta_{g, n}^{\mathrm{vir}}(d)[p] := \{ (\mathbf{G}, \tau) \mid \mathbf{G} \in \mathrm{Iso}(\Gamma_{0, n}^p(d)) \text{ and }\tau:E(\mathbf{G}) \to [p] \text{ is a bijection} \}/\sim,
\end{equation}
where $(\mathbf{G}, \tau)\sim (\mathbf{G}', \tau')$ if and only if there is an isomorphism $\mathbf{G} \to \mathbf{G}'$ in $\Gamma_{g, n}(d)$ which respects the edge-labellings $\tau$ and $\tau'$. We write $[\mathbf{G}, \tau]$ to denote the equivalence class of an edge-labelled pair. Given an injection $\iota: [q] \to [p]$ in $I$, the corresponding map
\[ \Delta_{g, n}^{\mathrm{vir}}(d)[p] \to \Delta^{\mathrm{vir}}_{g, n}(d)[q] \]
is given on an edge-labelled graph $[\mathbf{G}, \tau]$ by contracting the edges $\tau^{-1}(j)$ for $j \in [p] \smallsetminus \iota([q])$, and then taking the unique ordering of the remaining edges which is compatible with $\tau$.

\begin{rem}
    Let $\Mfr_{g,n}$ be the stack of $n$-pointed prestable genus $g$ curves. Let $\Mfr_{g,n}(d)\to \Mfr_{g,n}$ be the finite map that parametrizes in addition degree labelings of irreducible components which are non-negative and sum to $d$. Finally, let $\Mfr_{g,n}^{\mathrm{st}}(d)\subset \Mfr_{g,n}(d)$ be the open substack consisting of degree-labeled prestable curves which satisfy the same combinatorial stability condition as those for stable maps. The stack $\Mfr_{g,n}^{\mathrm{st}}(d)$ inherits a logarithmic structure by pulling back the one on $\Mfr_{g,n}$. The virtual dual complex $\Delta^{\mathrm{vir}}_{g,n}(d)$ agrees with the dual complex of the divisor of singular curves in the logarithmic stack $\Mfr_{g,n}^{\mathrm{st}}(d)$. The choice of the term `virtual' is justified by the fact that $\Mbar_{g,n}(\mathbb{P}^r,d)\to \Mfr_{g,n}^{\mathrm{st}}(d)$ is virtually smooth.
\end{rem}

\begin{rem}
Two clarifying remarks on $\Delta_{g, n}^{\mathrm{vir}}(d)$ are in order:
    \begin{itemize}
        \item Recall that the compactification $\mathcal{M}_{g,n}(\mathbb{P}^r,d)\subset \Mbar_{g,n}(\mathbb{P}^r,d)$ fails to be of normal crossings--- or indeed even irreducible---for $g\geq 1$. Therefore, it does not make sense to talk of the `dual complex' of the stable maps compactification when $g\geq 1$, hence the term `virtual' in our definition.
        \item For all $r$, curve classes in $\mathbb{P}^r$ are classified by the degree in $H_2(\mathbb{P}^r)\cong \mathbb{Z}$, so that the $(g,n,d)$-graphs do not depend on the target dimension $r$. Therefore, we suppress the $r$ in our notation $\Delta_{g, n}^{\mathrm{vir}}(d)$.
    \end{itemize}
\end{rem}

 Now we will explain why the dual complex $\Delta_{0, n}(d)$ of the normal crossings compactification
\[\M_{0, n}(\mathbb{P}^r, d) \hookrightarrow \Mbar_{0, n}(\mathbb{P}^r, d)\]
coincides with $\Delta_{0, n}^{\mathrm{vir}}(d)$.

\begin{prop}\label{prop:genuszero_dualcomplex}
The dual complex of the divisor of singular curves in the Kontsevich space of stable maps $\Mbar_{0, n}(\mathbb{P}^r, d)$ coincides with $\Delta_{0, n}^{\mathrm{vir}}(d)$ for $r \geq 2$ and $d > 0$.
\end{prop}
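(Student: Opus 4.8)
The plan is to verify the two conditions that characterize when a normal crossings divisor $\mathcal{D} \subset \overline{\mathcal{X}}$ has dual complex equal to the symmetric $\Delta$-complex built from the poset of strata with the ``metric'' cells $\kappa_{\mathbf{G}}$: first, that $\Mbar_{0,n}(\mathbb{P}^r,d) \smallsetminus \M_{0,n}(\mathbb{P}^r,d)$ genuinely is a normal crossings divisor (as a divisor on a smooth proper Deligne--Mumford stack), and second, that its strata are indexed precisely by the objects of $\Gamma_{0,n}(d)$ with the expected codimensions and incidences, with each stratum \emph{connected} so that no extra self-gluing occurs beyond what the automorphisms of $\mathbf{G}$ already impose. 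Granting these, the colimit defining $\Delta_{0,n}^{\mathrm{vir}}(d)$ is literally the standard construction of the dual complex of $\mathcal{D}$ (cf. \cite[\S 3.5]{cgp}), so the two coincide.

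First I would recall that $\Mbar_{0,n}(\mathbb{P}^r,d)$ is a smooth proper Deligne--Mumford stack and that the boundary $\Mbar_{0,n}(\mathbb{P}^r,d) \smallsetminus \M_{0,n}(\mathbb{P}^r,d)$ is a normal crossings divisor whose irreducible components are the closures of the codimension-one strata, indexed by one-edge $(0,n,d)$-graphs; this is classical (see e.g. Fulton--Pandharipande), and is exactly the genus-zero statement underlying the smoothness bullet point in the introduction. The restriction $r \geq 2$ enters here only to guarantee that the generic map with a given degree-distributed dual graph actually exists and has the expected dimension, so that the stratification by $\Gamma_{0,n}(d)$ is the honest stratification by dual graph; when $r = 1$ some degenerate phenomena can occur, which is why the proposition is stated for $r \geq 2$ even though the complex itself is $r$-independent.

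Next I would identify the codimension-$p$ strata: a stratum $\M(\mathbf{G}) \subset \Mbar_{0,n}(\mathbb{P}^r,d)$ of maps with dual graph $\mathbf{G}$ having $p$ edges has codimension $p$, and its closure is obtained by letting the edge-parameters (smoothing parameters) go to zero; the incidence $\M(\mathbf{G}) \subseteq \overline{\M(\mathbf{G}')}$ holds iff there is an edge contraction $\mathbf{G} \to \mathbf{G}'$ in $\Gamma_{0,n}(d)$. This is the genus-zero analogue of Theorem \ref{thm: PosetStructure}, and is where the description of $\M(\mathbf{G})$ as a fiber product of spaces $\M_{0,n'}(\mathbb{P}^r,d')$ along evaluation maps — hence a connected space — is invoked. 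The connectedness of each stratum, combined with the fact that the monodromy action of $\pi_1(\M(\mathbf{G}))$ on the branches of $\mathcal{D}$ through a point of $\M(\mathbf{G})$ factors through $\mathrm{Aut}(\mathbf{G})$, shows that the gluing data of the dual complex is exactly the colimit over $\Gamma_{0,n}(d)^{\mathrm{op}}$, matching the definition of $\Delta_{0,n}^{\mathrm{vir}}(d)$.

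The main obstacle is the bookkeeping around \emph{self-intersections and automorphisms}: one must check that when a boundary divisor component has nontrivial generic automorphisms or self-crosses (which happens for stable maps because of symmetric configurations of contracted or multiply-covering components), the resulting identifications in the dual complex are precisely those recorded by isomorphisms and edge contractions in $\Gamma_{0,n}(d)$ — i.e. that the symmetric $\Delta$-complex structure is faithfully reproduced. I expect this to be handled by appealing to the standard local structure of $\Mbar_{0,n}(\mathbb{P}^r,d)$ near a boundary point (étale-locally a product of a stratum with a coordinate neighborhood $\mathbb{A}^p$ on which the branches of $\mathcal{D}$ are the coordinate hyperplanes, permuted according to $\mathrm{Aut}(\mathbf{G})$), so that the claim reduces to the combinatorial identity between the poset-with-automorphisms of $\Gamma_{0,n}(d)$ and the cell structure of the dual complex, which is essentially the content of \cite[\S 3.5]{cgp} specialized to this stratification.
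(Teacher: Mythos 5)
Your proposal follows essentially the same route as the paper's proof: stratify the normal crossings boundary by $(0,n,d)$-graphs, use connectedness of strata (via the fiber-product description along evaluation maps), and identify the gluing data of the dual complex with automorphisms and edge contractions in $\Gamma_{0,n}(d)$, exactly as prescribed by the CGP construction. You are, if anything, slightly more explicit than the paper about the ``only if'' direction (that monodromy on branches factors through $\mathrm{Aut}(\mathbf{G})$, so no unexpected identifications of orderings arise), but the underlying argument is the same.
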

\begin{proof}
Let $\Delta_{0, n}(d)$ denote the dual complex of the divisor $D$ in question, and set $X = \Mbar_{0, n}(\mathbb{P}^r, d)$. Intersections of boundary divisors on $X$ have long been well-understood: see e.g. \cite[\S 1]{opretorus}. The codimension $p$ strata of the boundary are in bijection with the elements of $\mathrm{Iso}(\Gamma_{0, n}^{p-1}(d))$. Using the description of dual complexes in \cite[Definition 5.2]{cgp}, we can understand the $p$-cells of $\Delta_{0, n}(d)$ by
\[\Delta_{0, n}(d)[p] = \pi_0\left(\underbrace{\tilde{D} \times_X \cdots \times_X \tilde{D}}_{p + 1} \smallsetminus \{ (z_0, \ldots, z_{p} ) \mid z_i = z_j \text{ for some }i \neq j \}\right), \]
where $\tilde{D} \to D$ is the normalization of the boundary divisor. A point of the fiber product can be thought of as a point $x$ on a codimension $p +  1$ stratum, together with an ordering $\sigma$ of the branches of $D$ at $x$. Stratifying by the isomorphism types of the $(0,n,d)$-graphs, we can thus write the above fiber product as
\[\bigcup_{\mathbf{G} \in \mathrm{Iso}(\Gamma_{0, n}^{p}(d))} \K^{\mathrm{iso}}(\mathbf{G})/A_{\mathbf{G}}\]
where $A_{\mathbf{G}}$ is the finite subgroup of $\mathrm{Aut}(\mathbf{G})$ which preserves the edges of $\mathbf{G}$. Since each $\K^{\mathrm{iso}}(\mathbf{G})/A_{\mathbf{G}}$ is irreducible (by the proof of Proposition \ref{prop:conn_genus_one}) and the intersection of two strata corresponding to distinct isomorphism classes of $(0,n,d)$-graphs is clearly empty, this gives a natural bijection between $\Delta_{0, n}(d)[p]$ and $\Delta_{0, n}^{\mathrm{vir}}(d)[p]$.

The face maps in $\Delta_{0, n}(d)$ are opposite to inclusions of closures of boundary strata; therefore, they correspond to smoothing nodes of the source curves of stable maps. On the level of dual graphs, smoothing nodes corresponds to contracting edges. As such, there is a natural isomorphism of functors
\[ \Delta_{0, n}(d) \cong \Delta_{0, n}^{\mathrm{vir}}(d), \]
as desired.
\end{proof}

\subsection{Contractibility of the virtual dual complex}

We need some combinatorial definitions before the proof.

\begin{defn}
Given a stable $(g, n, d)$-graph $\mathbf{G}$, we call an edge $e \in E(\mathbf{G})$ a $1$-\textit{end} if it separates the graph into two connected components, one of which consists of a single vertex $v$ with $w(v) = 0$, $|m^{-1}(v)| = 0$, and $\delta(v) = 1$. The vertex $v$ is called a $1$-\textit{end vertex}. Among the graphs $\tilde{\mathbf{G}}$ such that $\mathbf{G}$ is obtained from $\tilde{\mathbf{G}}$ by contracting a sequence of $1$-ends, there is up to isomorphism a unique graph $\mathbf{G}^{\mathrm{sp}}$ which has the maximal number of $1$-ends. We call $\mathbf{G}^{\mathrm{sp}}$ the \textit{sprouting} of $\mathbf{G}$.
\end{defn}

\begin{figure}
    \centering
    \includegraphics[scale=1.3]{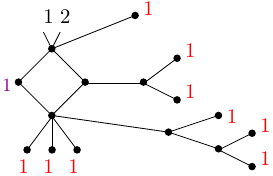}
    \caption{The sprouting $\mathbf{G}^{\mathrm{sp}}$ of the graph $\mathbf{G}$ from Figure \ref{fig:vir-comb-type-exmp}.}
    \label{fig:sprout-exmp}
\end{figure}

\begin{rem}
    More explicitly, the graph $\mathbf{G}^{\mathrm{sp}}$ is obtained from $\mathbf{G}$ via the following recipe: for each $v\in V(\mathbf{G})$ which is not a $1$-end vertex, add $\delta(v)$ copies of vertices and connect each of them to $v$ via a single edge; on the new graph, replace $\delta(v)$ by zero and assign degree one to each of the $\delta(v)$ added vertices.

    In particular, we may perform the operation on $\mathbf{G}_{\varnothing}$, the unique $(g,n,d)$-graph that only has a single vertex. Observe that for any $(g,n,d)$-graph $\mathbf{G}$, both cells $\kappa_{\mathbf{G}}$ and $\kappa_{\mathbf{G}_{\varnothing}^{\mathrm{sp}}}$ are faces of the cell $\kappa_{\mathbf{G}^{\mathrm{sp}}}$. The deformation retract of interest consists of certain linear homotopies in the cells $\kappa_{\mathbf{G}^{\mathrm{sp}}}$, as we now describe.
\end{rem}

\begin{thm}\label{thm:virtual-contraction}
For all $g, n, d$ with $d > 0$, the complex $\Delta_{g, n}^{\mathrm{vir}}(d)$ is contractible.
\end{thm}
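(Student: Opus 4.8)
The plan is to exhibit an explicit deformation retraction of $\Delta^{\mathrm{vir}}_{g,n}(d)$ onto the single point corresponding to the vertex of the cell $\kappa_{\mathbf{G}^{\mathrm{sp}}_\varnothing}$ supported at the $1$-end vertices only — equivalently, to contract $\Delta^{\mathrm{vir}}_{g,n}(d)$ in two stages. The first stage pushes every metric graph toward its sprouting: since both $\kappa_{\mathbf{G}}$ and $\kappa_{\mathbf{G}^{\mathrm{sp}}}$ are faces of $\kappa_{\mathbf{G}^{\mathrm{sp}}}$, there is room inside the larger cell to move linearly from a point of $\kappa_{\mathbf{G}}$ into the interior of $\kappa_{\mathbf{G}^{\mathrm{sp}}}$. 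Concretely, given $\ell \in \kappa_{\mathbf{G}}$, define a path that simultaneously inserts the new $1$-end edges of $\mathbf{G}^{\mathrm{sp}}$ with a small common length $t$ and rescales the old edges by $(1-(\#\text{new edges})t)$, renormalizing so the total length stays $1$. I would check that this is well-defined on the colimit: a morphism $\mathbf{G}\to\mathbf{G}'$ contracting non-$1$-end edges is compatible with taking sproutings, and the homotopy commutes with the face inclusions $\kappa_{\mathbf{G}'^{\mathrm{sp}}}\hookrightarrow\kappa_{\mathbf{G}^{\mathrm{sp}}}$, so the local homotopies glue to a global homotopy $H_1\colon \Delta^{\mathrm{vir}}_{g,n}(d)\times[0,1]\to\Delta^{\mathrm{vir}}_{g,n}(d)$ whose image at time $1$ lies in the subcomplex spanned by sproutings.

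The second stage contracts that subcomplex. The key structural observation is that $\mathbf{G}^{\mathrm{sp}}_\varnothing$ — the sprouting of the one-vertex graph, consisting of a single vertex carrying all the genus and markings with $d$ many degree-one $1$-end legs attached — is a \emph{target} for every sprouting: for any $\mathbf{G}$, collapsing all the non-$1$-end edges of $\mathbf{G}^{\mathrm{sp}}$ and then further contracting down to the one-vertex core yields precisely $\mathbf{G}^{\mathrm{sp}}_\varnothing$ (here $d>0$ is essential, so that $\mathbf{G}^{\mathrm{sp}}_\varnothing$ has at least one edge and the construction is nontrivial; when $d=0$ there are no $1$-ends and the statement fails, consistent with the Remark in the introduction). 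So $\kappa_{\mathbf{G}^{\mathrm{sp}}_\varnothing}$ is a common face of all the cells $\kappa_{\mathbf{G}^{\mathrm{sp}}}$ after the first retraction. I would then perform a straight-line homotopy inside each $\kappa_{\mathbf{G}^{\mathrm{sp}}}$ from the current point toward the barycenter of (or toward a chosen vertex of) the face $\kappa_{\mathbf{G}^{\mathrm{sp}}_\varnothing}$ — i.e., shrink all edges of $\mathbf{G}^{\mathrm{sp}}$ that are not among the $d$ canonical $1$-ends to length $0$, distributing their length among the $1$-ends — and finally collapse $\kappa_{\mathbf{G}^{\mathrm{sp}}_\varnothing}$ itself to a point by a linear homotopy to its barycenter. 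Again the main bookkeeping is checking that these moves are consistent across the gluing maps of the symmetric $\Delta$-complex, i.e. that the homotopy is $S_n$- and automorphism-equivariant and descends to the quotient; this follows because the construction only refers to intrinsic features ($1$-ends, the core, the degree function) that are preserved by all morphisms and automorphisms.

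\textbf{The main obstacle} I anticipate is not topological but combinatorial-organizational: verifying that the two stages patch together into a single continuous map on the colimit, rather than just on each cell. The subtlety is that a point on a face $\kappa_{\mathbf{G}'}$ may be described via many graphs $\mathbf{G}$ mapping to $\mathbf{G}'$, and one must ensure the homotopy does not depend on this choice — equivalently, that the family of local homotopies forms a cocone under the diagram $\Gamma_{g,n}(d)^{\mathrm{op}}\to\mathsf{Spaces}$. I would handle this by defining the homotopy directly and uniformly in terms of the edge set $E(\mathbf{G})$ and the distinguished subset of (would-be) $1$-end edges, noting that edge contractions $\mathbf{G}\to\mathbf{G}'$ carry $1$-ends to $1$-ends (or contract them), so the formula is manifestly natural. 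A secondary point to be careful about is continuity at the boundary between cells where edge lengths hit $0$: there one must confirm that the reparametrization constants (the rescaling factor $1-(\#\text{new edges})t$ and the renormalization) vary continuously and that the piecewise definition across the two time intervals $[0,1/2]$ and $[1/2,1]$ agrees at $t=1/2$. Once naturality and continuity are in hand, contractibility — and hence the vanishing of reduced homology — is immediate, and the genus-zero case of Theorem~\ref{thm:mainthm} follows since $\Delta_{0,n}(d)=\Delta^{\mathrm{vir}}_{0,n}(d)$.
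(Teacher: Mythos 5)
Your overall plan is the same as the paper's: push each cell $\kappa_{\mathbf{G}}$ linearly inside $\kappa_{\mathbf{G}^{\mathrm{sp}}}$ toward the face $\kappa_{\mathbf{G}_\varnothing^{\mathrm{sp}}}$, then collapse to a point. The paper does this in one stage via the formula $\ell_t(e) = (1-t)\ell(e) + t/d$ on $1$-ends and $\ell_t(e) = (1-t)\ell(e)$ on all other edges. However, your Stage~1 formula has a genuine well-definedness gap which your own ``main obstacle'' discussion fails to catch.

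The problem is that your Stage~1 treats ``new'' and ``old'' $1$-ends of $\mathbf{G}^{\mathrm{sp}}$ differently: new ones are inserted with length $t$, while old ones (those already present in $\mathbf{G}$) are merely rescaled by $(1 - mt)$, where $m$ is the number of new edges. But ``new vs.\ old'' is not an intrinsic feature of a point of $\Delta^{\mathrm{vir}}_{g,n}(d)$. Concretely, suppose $e$ is a $1$-end of $\mathbf{G}$ with $\ell(e) = 0$, and let $\psi\colon \mathbf{G} \to \mathbf{G}' := \mathbf{G}/e$. Then $(\mathbf{G}, \ell)$ and $(\mathbf{G}', \ell|_{E(\mathbf{G}')})$ are the same point of the colimit, and both have the same sprouting $\mathbf{G}^{\mathrm{sp}} = (\mathbf{G}')^{\mathrm{sp}}$. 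Yet in the first presentation $e$ is an \emph{old} $1$-end, so your Stage~1 keeps $\ell_t(e) = (1 - mt)\cdot 0 = 0$ for all $t$; in the second presentation $e$ is a \emph{new} $1$-end of $(\mathbf{G}')^{\mathrm{sp}}$, so $\ell_t(e) = t$. Your local homotopies therefore do not form a cocone under $\Gamma_{g,n}(d)^{\mathrm{op}} \to \mathsf{Spaces}$, and the claimed justification --- that the construction ``only refers to intrinsic features ($1$-ends, the core, the degree function) that are preserved by all morphisms'' --- does not hold, since a $1$-end that is contracted by a morphism changes status from old to new. The paper sidesteps this precisely by giving every $1$-end, regardless of provenance, the same update rule $(1-t)\ell(e) + t/d$: a $1$-end of length $0$ then receives length $t/d$ in either presentation, and the compatibility check reduces to the straightforward verification the paper carries out for contractions of $1$-ends versus non-$1$-ends. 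Your plan is salvageable by replacing your Stage~1 formula with a rule that depends only on whether $e$ is a $1$-end and on $\ell(e)$, not on whether $e$ was sprouted --- at which point the two stages collapse into the paper's single linear homotopy.
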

\begin{proof}
We will define a deformation retract \[\Delta_{g, n}^{\mathrm{vir}}(d) \times [0, 1] \to \Delta_{g, n}^{\mathrm{vir}}(d),\]
working cell-by-cell. We define
\[ f_{\mathbf{G}}: \kappa_{\mathbf{G}} \times [0, 1] \to \Delta_{g, n}^{\mathrm{vir}}(d) \]
by
\[f_{\mathbf{G}}(\ell) = (\mathbf{G}^{\mathrm{sp}}, \ell_t), \]
where for $e \in E(\mathbf{G}^{\mathrm{sp}})$ we define
\[ \ell_t(e) = (1-t)\ell(e) + t/d \]
if $e$ is a $1$-end, and
\[\ell_t(e) = (1 - t) \ell(e) \]
otherwise. Then at $t = 1$, the image of $f_{\mathbf{G}}$ recovers $\mathbf{G}_{\varnothing}^{\mathrm{sp}}$with length $1/d$ on each of the $d$ edges. 

We claim that the maps $f_{\mathbf{G}}$ glue together to form a deformation retract of $\Delta_{1, n}^{\mathrm{vir}}(d)$. To do this, we have to prove that whenever $\psi: \mathbf{G} \to \mathbf{G}'$ is a morphism in $\Gamma_{g, n}(d)$, the diagram
\[
\begin{tikzcd}
\kappa_{\mathbf{G}} \times [0, 1] \arrow[r, "f_{\mathbf{G}}"]            & \Delta_{g, n}^{\mathrm{vir}}(d) \\
\kappa_{\mathbf{G}'} \times [0, 1] \arrow[ru, "f_{\mathbf{G}'}"'] \arrow[u, "\psi^*"] &  
\end{tikzcd}
\]
commutes. As morphisms in $\Gamma_{g,n}(d)$ are compositions of edge contractions and isomorphisms, it suffices to assume that $\psi$ is one of the two types. 

First consider the case where $\psi$ is a contraction of an edge $e$. The map $\psi$ induces a bijection $E(\mathbf{G})\smallsetminus \{e\}\xrightarrow{\cong} E(G')$, and one can check from the construction that for all $\tilde{e}\in E(\mathbf{G})\smallsetminus \{e\}$, $\ell_{t}(\tilde{e}) = \ell_t(\psi(\tilde{e}))$. Therefore, it suffices to check that the two maps agree for the coordinate $\ell_t(e)$ on $\kappa_{\mathbf{G}}\times [0,1]$. Then $\psi^*$ simply extends the metric on $\mathbf{G}'$ to one on $\mathbf{G}$ by setting the length of $e$ to $0$. If $e$ is not a $1$-end, then its length remains $0$ when applying $f_{\mathbf{G}}$ for all $t > 0$, so the diagram commutes in this case. When $e$ is a $1$-end, then $\mathbf{G}^{\mathrm{sp}} = (\mathbf{G}')^{\mathrm{sp}}$, and $e$ has length $t/d$ when applying both $f_{\mathbf{G}} \circ \psi^*$ and $f_{\mathbf{G}'}$, so the diagram also commutes in this case. 

When $\psi$ is an isomorphism of graphs, the substance of the claim is that the metric graphs $(\mathbf{G}'^{\mathrm{sp}}, \ell'_t)$ and $(\mathbf{G}^{\mathrm{sp}}, (\psi^*\ell')_t)$ will also be isomorphic: this is true because there exists an isomorphism $\mathbf{G}^\mathrm{sp} \to (\mathbf{G}')^{\mathrm{sp}}$ which extends $\psi$. Hence the maps $f_{\mathbf{G}}$ glue to give a deformation retract of $\Delta_{g, n}^{\mathrm{vir}}(d)$. 
\end{proof}

\subsection{Description of $\Delta_{1, n}(d)$ as a symmetric $\Delta$-complex}

We are now ready to describe the dual complex $\Delta_{1, n}(d)$ of $\M_{1, n}(r, d)$, as a symmetric $\Delta$-complex. Recall that formally, a symmetric $\Delta$-complex $X$ is a functor $I^\mathrm{op} \to \mathsf{Set}$, where $I$ is the category whose morphisms are injections and whose objects are the finite sets
\[[p] := \{0, \ldots, p\}, \]
for $p \geq -1$; we formally set $[-1] = \varnothing$. We define
$\Delta_{1, n}(d)[p]$ to be the set of isomorphism classes of pairs $(\mathbf{G}, \omega)$ where
\begin{itemize}
    \item $\mathbf{G}$ is an object in the groupoid $\tilde{\Gamma}_{1, n}^\rad(d)$ (strictly speaking, the graph $\mathbf{G}$ comes with a radial alignment $\rho$ such that $d_{\min}(\mathbf{G}, \rho) > 1$ as in Definition \ref{def:minimal-radius}; we will suppress $\rho$ in the notation), and
    \item $\omega: C(\mathbf{G}) \sqcup \{1, \ldots, k \} \to [p]$ is a bijection, where $k$ is equal to the length of $\mathbf{G}$.
\end{itemize} 
Two such pairs $(\mathbf{G}, \omega)$ and $(\mathbf{G}', \omega')$ are isomorphic if there exists an isomorphism $\psi: \mathbf{G} \to \mathbf{G}'$ in $\tilde{\Gamma}_{1, n}^\rad(d)$  such that for every edge $e$ in the core of $\mathbf{G}$, we have $\omega(e) = \omega'(\psi(e))$, and such that for all $i \in \{1, \ldots, k\}$, we have $\omega(i) = \omega'(i)$. Given an injection $\iota: [q] \to [p]$, we define
\[\Delta_{1, n}(d)(\iota) : \Delta_{1, n}(d)[p] \to \Delta_{1, n}(d)[q]\]
as follows: given $(\mathbf{G}, \omega)$ in $\Delta_{1, n}(d)[p]$, perform radial merges and core edge contractions along those $\omega^{-1}(j)$ for $j \in [p] \smallsetminus \iota([q])$ and then relabel according to the ordering provided by $\omega$. As stated in Remark \ref{rem:face_closure}, the quantity $d_{\mathrm{min}}(\mathbf{G}, \rho)$ can only increase upon taking contractions or radial merges, so $\Delta_{1, n}(d)$ is closed under taking faces. 

The following proposition completes the proof of Theorem \ref{thm:determination}.

\begin{prop}
The symmetric $\Delta$-complex $\Delta_{1, n}(d)$ described above coincides with the dual complex of the compactification \[\M_{1, n}(\mathbb{P}^r, d) \hookrightarrow \widetilde{\M}_{1, n}(\mathbb{P}^r, d). \]
\end{prop}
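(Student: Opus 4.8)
The plan is to verify the general recipe for dual complexes from \cite[Definition 5.2]{cgp} against the combinatorial description of $\Delta_{1,n}(d)$ given above, exactly as in the earlier proposition identifying $\Delta_{0,n}(d)$ with $\Delta^{\mathrm{vir}}_{0,n}(d)$, but now using the finer stratification of $\widetilde{\M}_{1,n}(\mathbb{P}^r,d)$ by radially aligned graphs. First I would recall that, since $\M_{1,n}(\mathbb{P}^r,d)\subset \widetilde{\M}_{1,n}(\mathbb{P}^r,d)$ is a normal crossings compactification of a smooth Deligne--Mumford stack by Theorem~\ref{thm-logsm}, the $p$-cells of the dual complex are computed as $\pi_0$ of the $(p+1)$-fold fiber product of the normalized boundary divisor $\widetilde{D}\to D$ over $X=\widetilde{\M}_{1,n}(\mathbb{P}^r,d)$, minus the big diagonal. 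A point of this space is a point $x$ lying on a codimension-$(p+1)$ stratum together with an ordering of the $p+1$ analytic branches of $D$ through $x$.

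Next I would match codimension-$(p+1)$ strata with the indexing set $\Delta_{1,n}(d)[p]$. By Theorem~\ref{thm-logsm} the stratification of $\widetilde{\M}_{1,n}(\mathbb{P}^r,d)$ by radially aligned $(1,n,d)$-graphs is the underlying stratification of a logarithmically smooth log structure, so the boundary divisors correspond to the length-one data: contracting a single core edge, or performing a single radial merge. Hence a codimension-$(p+1)$ stratum $\widetilde{\M}(\mathbf{G},\rho)$ has exactly $|C(\mathbf{G})|+\ell(\rho)=p+1$ branches through it, the branches being indexed by $C(\mathbf{G})\sqcup\{1,\dots,k\}$ where $k=\ell(\rho)$; this is precisely the codimension formula recorded in the corollary after Lemma~\ref{lem-curvetorus}, pulled back along the factorization locus. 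One must also check that $\widetilde{\M}(\mathbf{G},\rho)$ is nonempty exactly when $(\mathbf{G},\rho)\in \mathrm{Ob}(\tilde{\Gamma}^{\rad}_{1,n}(d))$, i.e.\ when $d_{\min}(\mathbf{G},\rho)>1$: emptiness when $d_{\min}=1$ and nonemptiness otherwise is Theorem~\ref{conprop}. So the strata of positive codimension in the divisor $D$ are in bijection with objects of $\tilde{\Gamma}^{\rad}_{1,n}(d)$, and an ordering of the branches at such a stratum amounts to a bijection $\omega:[p]\to C(\mathbf{G})\sqcup\{1,\dots,k\}$.

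Then I would run the $\pi_0$ computation. Two orderings $(\mathbf{G},\omega)$ and $(\mathbf{G},\omega')$ of the branches at points $x,x'$ of the same stratum are connected by a path in the fiber product iff there is a monodromy transformation — an element of the fundamental group of the stratum acting on the set of branches — carrying one to the other; because $\widetilde{\M}(\mathbf{G},\rho)$ is \emph{connected} (Theorem~\ref{conprop}), the relevant equivalence is generated precisely by automorphisms of $(\mathbf{G},\rho)$ in $\tilde{\Gamma}^{\rad}_{1,n}(d)$ permuting core edges among themselves and fixing each radius label, which is exactly the isomorphism relation defining $\Delta_{1,n}(d)[p]$. (Connectedness of the stratum is what rules out any extra identifications or, conversely, any failure to identify $\omega$ with $\omega\circ\mathrm{Aut}$; this is the crux of the argument, and is why so much work went into Section~\ref{sec:connectedness}.) Finally I would check the face maps agree: the $i$-th face map of the dual complex is "smooth the $i$-th node/branch," which on combinatorial types is a single core edge contraction or a single radial merge, and this matches $\Delta_{1,n}(d)(\iota)$ for $\iota$ the inclusion omitting $i$; that core edge contractions and radial merges indeed decrease codimension by one and do not decrease $d_{\min}$ (so we stay in $\tilde{\Gamma}^{\rad}_{1,n}(d)$) is the implicit claim recorded after Definition~\ref{def:minimal-radius} together with Theorem~\ref{thm: PosetStructure}, which identifies boundary-stratum incidences with morphisms in $\tilde{\Gamma}^{\rad}_{1,n}(d)$. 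Since the functors $I^{\mathrm{op}}\to\mathsf{Set}$ agree on objects and on all morphisms, the symmetric $\Delta$-complexes coincide, and hence so do their geometric realizations.

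The main obstacle is the $\pi_0$ step: one must be careful that "ordering of the analytic branches through $x$" really is captured by the combinatorial data $C(\mathbf{G})\sqcup\{1,\dots,k\}$ and that the monodromy action on these branches factors through $\mathrm{Aut}(\mathbf{G},\rho)$. This uses both that the branches correspond to the boundary divisors listed in the codimension formula (a consequence of the log smoothness of the stratification, Theorem~\ref{thm-logsm}) and the connectedness of each stratum (Theorem~\ref{conprop}); without connectedness, distinct components of a stratum could contribute distinct cells with the same combinatorial type, and the clean bijection would fail. Everything else — the codimension count, the face-map compatibility, the nonemptiness criterion — is bookkeeping with the definitions already in place.
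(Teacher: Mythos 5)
Your proposal is correct and follows essentially the same route as the paper's proof: compute $\Delta(D)[p]=\pi_0(\widetilde{D}_p)$ via \cite[Definition~5.2]{cgp}, use connectedness of $\widetilde{\M}(\mathbf{G},\rho)$ (Theorem~\ref{conprop}) and the nonemptiness criterion to index codimension strata by objects of $\tilde{\Gamma}^{\rad}_{1,n}(d)$, identify the branches at a stratum with $C(\mathbf{G})\sqcup\{1,\dots,k\}$ via Theorem~\ref{thm: PosetStructure}, and check that the $\pi_0$ identifications and face maps match those of $\Delta_{1,n}(d)$. You correctly pinpoint connectedness of strata as the crux of the $\pi_0$ step, which is exactly the role it plays in the paper's argument.
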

\begin{proof}
Let $X = \widetilde{\M}_{1, n}(\mathbb{P}^r, d)$, set \[D = \widetilde{\M}_{1, n}(\mathbb{P}^r, d) \smallsetminus \M_{1, n}(\mathbb{P}^r, d), \]
and write $\Delta(D)$ for the dual complex of $D$; we want to show that $\Delta(D) = \Delta_{1, n}(d)$. The proof that
\[ \Delta(D)[p] = \Delta_{1, n}(d)[p] \]
for each $p \geq 0$ is almost exactly the same as that of Proposition \ref{prop:genuszero_dualcomplex}, since we have proved the irreducibility of the strata $\widetilde{\M}(\mathbf{G}, \rho)$. In this case, the analytic branches at a point in $\widetilde{\M}(\mathbf{G}, \rho)$ correspond to the core edges of $\mathbf{G}$ together with the levels of $\rho$. Because Theorem \ref{thm: PosetStructure} implies that the face maps in $\Delta(D)$ reflect morphisms in the category $\tgamma^{\rad}_{1, n}(d),$ we  upgrade the bijection $\Delta(D)[p] = \Delta_{1,n}(d)[p]$ to an isomorphism of functors $\Delta(D) \cong \Delta_{1, n}(d)$.
 \end{proof}

\subsection{The dual complex as a topological space} The geometric realization of $\Delta_{1, n}(d)$ can be described as follows: given $\mathbf{G} \in \mathrm{Ob}(\tgamma_{1, n}^\rad(d))$ with $\ell(\mathbf{G}) = k$ and such that $|C(\mathbf{G})| + k = p + 1$, define a cell
\[\sigma_{\mathbf{G}} = \left\{(\mathbf{G}, \ell) \mid \ell: C(\mathbf{G}) \sqcup \{1, \ldots, k\} \to \mathbb{R}_{\geq 0}, \, \sum_{e \in C(\mathbf{G})} \ell(e) + \sum_{i = 1}^{k} \ell(i) = 1\right\}.  \]
A morphism $\mathbf{G} \to \mathbf{G}'$ in the category $\tgamma_{1, n}^\rad(d)$ induces in a natural way an inclusion of cells
\[ \sigma_{\mathbf{G}'} \to \sigma_{\mathbf{G}}.  \]
In this way, we can view the association of a cell to a graph as a functor  \[\sigma: \tilde{\Gamma}_{1, n}(d)^{\mathrm{op}} \to \mathsf{Spaces}. \]
Then the geometric realization of $\Delta_{1, n}(d)$ is identified with the colimit of this functor. Concretely, a point on $\Delta_{1, n}(d)$ can be thought as a pair $(\hat{\mathbf{G}}, \ell)$ where $\hat{\mathbf{G}}$ is the canonical subdivision of $\mathbf{G} \in \Ob(\tgamma_{1, n}^\rad(d))$, and $\ell: E(\hat{\mathbf{G}}) \to \mathbb{R}_{\geq 0}$ is a metric of total length $1$, such that $\ell$ takes the same value on any two edges of $\hat{\mathbf{G}}$ which lie over the same edge in $P_k$ under the canonical map $\hat{\mathbf{G}} \to P_k$. Equivalently, we can think of the metric $\ell$ as a function from $C(\mathbf{G}) \cup E(P_k) \to \mathbb{R}_{\geq 0}$ of total length $1$. The metric on the edges of $P_k$ induces the metric on the non-core edges of $\hat{\mathbf{G}}$ by setting the length of an edge $e$ to be 
\[ \frac{\ell(\hat{\rho}(e))}{|\hat{\rho}^{-1}(e) |}; \]
see Figure \ref{fig:point-example}.

\begin{figure}
    \centering
    \includegraphics[scale=1.3]{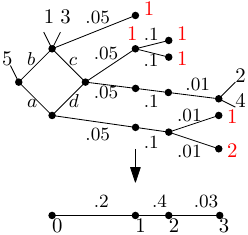}
    \caption{An example of a point in the cell $\sigma_{\mathbf{G}}$, where $\mathbf{G}$ is the graph from Figure \ref{fig:comb-type-exmp}. The metric on the edges of $\hat{\mathbf{G}}$ is induced by a metric on the edges of $P_3$, together with a metric on the core edges of $\mathbf{G}$; in this example, $a, b, c, d \in \mathbb{R}_{\geq 0}$ can be chosen to be any numbers satisfying $a + b + c + d = .37$. }
    \label{fig:point-example}
\end{figure}

Performing a radial merge or a core edge contraction corresponds to taking a face of a cell, by setting some edge lengths to $0$.
\subsection{The dual complex as a subspace of $\Delta_{1, n}^{\mathrm{vir}}(d)$} We are now ready to prove that the dual complex $\Delta_{1, n}(d)$ is contractible. In this section, we will work with the geometric realizations of $\Delta_{1, n}(d)$ and $\Delta_{1, n}^{\mathrm{vir}}(d)$. We begin by extending Definition \ref{def:minimal-radius} to points of $\Delta_{1, n}^{\mathrm{vir}}(d)$.
\begin{defn}
Suppose $(\mathbf{G}, \ell) \in \Delta_{1, n}^{\mathrm{vir}}(d)$. For each vertex $v \in V(\mathbf{G})$ outside the core, the distance $\mathrm{dist}(v)$ of $v$ from the core is well-defined. Define a radial alignment $\rho_{\ell}$ on $\mathbf{G}$ by ordering the vertices by $\mathrm{dist}(v)$. We call $\rho_{\ell}$ the \textit{canonical radial alignment} of the metric graph $(\mathbf{G}, \ell)$.
\end{defn}

Define a subspace
\[Z \hookrightarrow \Delta_{1, n}^{\mathrm{vir}}(d) \]
by
\[Z = \{(\mathbf{G}, \ell) \mid d_{\mathrm{min}}(\mathbf{G}, \rho_{\ell}) > 1\}. \] Then we have a homeomorphism $\Delta_{1, n}(d) \to Z$ which takes $(\hat{\mathbf{G}}, \ell)$ to $(\mathbf{G}, \hat{\ell})$, where $\hat{\ell}$ is obtained by adding the edge lengths across bivalent vertices which are smoothed; see Figure \ref{fig:embedding-exmp}.

\begin{figure}
    \centering
    \includegraphics[scale=1.3]{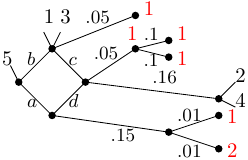}
    \caption{The image of the point of $\Delta_{1, 5}(7)$ from Figure \ref{fig:point-example} under the embedding $\Delta_{1, 5}(7) \hookrightarrow \Delta_{1, 5}^{\mathrm{vir}}(7)$.}
    \label{fig:embedding-exmp}
\end{figure}

\begin{prop}\label{prop:subspace_preserved}
The deformation retract of $\Delta_{1, n}^{\mathrm{vir}}(d)$ described in the proof of Theorem \ref{thm:virtual-contraction} preserves the subspace $Z$ defined above. In particular, $\Delta_{1, n}(d)$ is contractible.
\end{prop}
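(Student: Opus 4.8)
The plan is to verify that each cell map $f_{\mathbf{G}}$ constructed in the proof of Theorem~\ref{thm:virtual-contraction}, restricted to the subspace $Z$, stays inside $Z$ for all $t \in [0,1]$. By the homeomorphism $\Delta_{1,n}(d) \to Z$, this yields a deformation retract of $\Delta_{1,n}(d)$ onto the point $(\mathbf{G}_\varnothing^{\mathrm{sp}}, \ell)$ with length $1/d$ on each of the $d$ edges. The key observation is that this terminal point lies in $Z$: the graph $\mathbf{G}_\varnothing^{\mathrm{sp}}$ is a single genus-one vertex of degree $0$ with $d$ edges to degree-one vertices, so its core is the single genus-one vertex, all $d$ non-core vertices sit at the same distance $1/d$ from the core, and hence the canonical radial alignment $\rho_\ell$ has $\rho_\ell^{-1}(1)$ equal to all $d$ degree-one vertices; thus $\mathrm{rad} = 1$ and $d_{\mathrm{min}} = d > 1$. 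So the retraction lands in $Z$, and it remains to check the intermediate points do too.

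First I would fix a point $(\mathbf{G}, \ell) \in Z$ and describe exactly what happens under $f_{\mathbf{G}}$. The homotopy scales all non-$1$-end edges linearly by $(1-t)$ and moves each $1$-end edge length toward $1/d$. Crucially, the $1$-end vertices are precisely degree-one vertices lying at the very end of the tree part of the graph; contracting the core is unaffected, and I need to track the canonical radial alignment $\rho_{\ell_t}$ on $\mathbf{G}^{\mathrm{sp}}$. The essential point is to show that the contraction radius $\mathrm{rad}(\mathbf{G}^{\mathrm{sp}}, \rho_{\ell_t})$ and its degree $d_{\mathrm{min}}$ behave controllably: I claim that for $t \in [0,1)$ the relevant ``innermost positive-degree radius'' is the same set of vertices as for $t = 0$ (since scaling all non-$1$-end edges by a common factor $(1-t)$ does not change the \emph{ordering} of distances among vertices that are joined to the core by non-$1$-end edges), while at $t = 1$ everything collapses to $\mathbf{G}_\varnothing^{\mathrm{sp}}$ which we have already checked lies in $Z$. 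The subtlety is that the $1$-end edges are themselves changing length non-proportionally, so a $1$-end vertex could in principle move past another vertex in the distance ordering; but a $1$-end vertex has degree $1$, and the relevant lower bound on $d_{\mathrm{min}}$ only needs the sum of $\delta$ over the innermost radius with positive total degree — I would argue that the innermost positive-degree radius at any time $0 \le t < 1$ contains the image of the innermost positive-degree radius at $t=0$, possibly together with extra degree-one ($1$-end) vertices, so the sum of degrees can only stay the same or increase. Hence $d_{\mathrm{min}}(\mathbf{G}^{\mathrm{sp}}, \rho_{\ell_t}) \ge d_{\mathrm{min}}(\mathbf{G}, \rho_\ell) > 1$ throughout.

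The main obstacle I anticipate is the bookkeeping around \textbf{sprouting and the canonical radial alignment}: when we pass from $\mathbf{G}$ to $\mathbf{G}^{\mathrm{sp}}$ we add new degree-one vertices attached to \emph{every} positive-degree vertex, not just those at the innermost radius, so I must make sure these newly sprouted vertices never create a spurious innermost positive-degree radius of total degree exactly $1$. The cleanest way around this is to note that a sprouted edge attached to a vertex $v$ has length $0$ at $t=0$ (so the sprouted vertex is at the same distance as $v$), and for $t > 0$ acquires length $t/d$; consequently at any radius $r = \mathrm{rad}$, the sum $\sum_{v \in \rho^{-1}(r)} \delta(v)$ picks up contributions from \emph{all} sprouted vertices hanging off vertices at radius $< r$ that have been pushed out to radius exactly $r$ — and since each positive-degree vertex sprouts a number of degree-one children equal to its old degree, the running total of degree at or inside any radius is conserved. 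I would then make the final count precise: at the contraction radius the degree sum equals the total degree $d$ of all vertices weakly inside, minus the degree accounted for at strictly smaller radii, which is $0$ by minimality of the contraction radius; so $d_{\mathrm{min}} = d > 1$ at every $t \in (0,1)$, and the $t = 0$ case is the hypothesis $(\mathbf{G},\ell) \in Z$ itself. This shows $f_{\mathbf{G}}(\ell, t) \in Z$ for all $t$, the maps $f_{\mathbf{G}}$ glue as in Theorem~\ref{thm:virtual-contraction} and restrict to the closed subspace $Z$, and therefore $\Delta_{1,n}(d) \cong Z$ is contractible.
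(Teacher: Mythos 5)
Your overall plan is sound: check the terminal point $(\mathbf{G}_\varnothing^{\mathrm{sp}}, \cdot)$ lies in $Z$ (which you do correctly), and verify the intermediate points $f_{\mathbf{G}}(\ell, t)$ stay in $Z$. You also correctly identify the key claim: the innermost positive-degree radius of $(\mathbf{G}^{\mathrm{sp}}, \ell_t)$ should contain the image of the innermost positive-degree radius at $t = 0$, so that $d_{\mathrm{min}}$ cannot decrease. However, your attempted justification of that claim --- the ``conservation of running degree totals'' argument in your final paragraph --- is wrong, and it leads you to the false conclusion that $d_{\mathrm{min}}(\mathbf{G}^{\mathrm{sp}}, \ell_t) = d$ for all $t \in (0,1)$. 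A counterexample: take $\mathbf{G}$ with a genus-one core vertex attached by an edge of length $a$ to a degree-$2$ vertex $v_1$ and by an edge of length $b > a$ to a degree-$3$ vertex $v_2$, so $d = 5$. At time $t \in (0,1)$, the two sprouts of $v_1$ in $\mathbf{G}^{\mathrm{sp}}$ sit at distance $(1-t)a + t/5$, strictly less than the distance $(1-t)b + t/5$ of the three sprouts of $v_2$; hence $d_{\mathrm{min}} = 2$, not $5$. Your slip is in assuming all positive degree lies weakly inside the contraction radius --- but the contraction radius is by definition the \emph{minimal} radius with positive degree, so in general there is degree sitting strictly outside it.

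What actually needs to be proved, and what the paper proves, is that the ordering of the degree-one vertices by distance to the core is preserved for all $t$. Every positive-degree vertex $v$ of $\mathbf{G}^{\mathrm{sp}}$ is a $1$-end vertex whose unique path to the core is a string of non-$1$-end edges followed by a single $1$-end $e_1$; writing $p_v(t)$ for the length of this path under the homotopy, one computes $p_v(t) = (1-t)\bigl(p_v(0)-\ell(e_1)\bigr) + \bigl((1-t)\ell(e_1) + t/d\bigr) = (1-t)\,p_v(0) + t/d$. This is the same strictly increasing affine function of $p_v(0)$ for every $1$-end vertex, so $p_v(0) \leq p_w(0)$ implies $p_v(t) \leq p_w(t)$, the set of innermost degree-one vertices can only grow, and hence $d_{\mathrm{min}}(\mathbf{G}^{\mathrm{sp}}, \ell_t) \geq d_{\mathrm{min}}(\mathbf{G}^{\mathrm{sp}}, \ell_0) = d_{\mathrm{min}}(\mathbf{G}, \ell) > 1$. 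Your remark about ``scaling non-$1$-end edges by $(1-t)$'' and the ``subtlety'' of $1$-end edges moving non-proportionally gestures at this but never resolves it; the explicit affine formula is the step your argument is missing.
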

\begin{proof}
We will show that for all $t \in [0, 1]$ and points $(\mathbf{G}, \ell)$ in $\Delta_{1, n}^{\mathrm{vir}}(d)$, we have
\[ d_{\mathrm{min}}(\mathbf{G}^{\mathrm{sp}}, \ell_t) \geq d_{\mathrm{min}}(\mathbf{G}, \ell) = d_{\mathrm{min}}(\mathbf{G}^{\mathrm{sp}}, \ell_0).\]
For this it suffices to show that for $v, w \in V(\mathbf{G}^{\mathrm{sp}})$ with $\delta(v) = \delta(w) = 1$, then if $\rho_{\ell_0}(v) \leq \rho_{\ell_0}(w)$ we also have $\rho_{\ell_t}(v) \leq \rho_{\ell_t}(w)$ for all $t > 0$. In this situation, $v$ and $w$ are necessarily $\delta$-degree one vertices with no markings, valence $1$, and genus zero. Therefore the minimal path from each vertex to the core of $\mathbf{G}^{\mathrm{sp}}$ consists of a sequence of edges which are not $1$-ends, followed by a single $1$-end. Let $e_1$ be the $1$-end supporting $v$ and $e_2$ be the $1$-end supporting $w$, and let $p_v(t)$ be the length of the path from the core to $v$ at time $t$; define $p_w(t)$ similarly. Then $p_v(t) \leq p_w(t)$ if and only if $\rho_{\ell_t}(v) \leq \rho_{\ell_t}(w)$. Notice that
\[ p_v(t) = (1 - t) \ell(e_1) + t/d + (1 - t)(p_v(0) - \ell(e_1)) = t/d + (1 - t)p_v(0) \]
by the definition of the homotopy. Similarly,
\[ p_w(t) = t/d + (1 - t)p_w(0). \]
Since $p_v(0) \leq p_w(0)$, we can conclude that $p_v(t) \leq p_w(t)$ for all $t \geq 0$, as we wanted to show. 
\end{proof}

\bibliographystyle{alpha}
\bibliography{vz.bib}

\end{document}